\documentclass[a4paper,11pt]{article}
\pdfoutput=1
\usepackage{hyperref}
\hypersetup{hypertexnames = false, bookmarksdepth = 2, bookmarksopen = true, colorlinks, linkcolor = black, citecolor = black, urlcolor = black, pdfstartview={XYZ null null 1}}

\usepackage{amsfonts}
\usepackage[fleqn, leqno]{amsmath}
\usepackage{amsthm}

\usepackage{booktabs}
\usepackage{subcaption}
\usepackage{colonequals}
\usepackage{enumitem}
\usepackage{fullpage}
\usepackage{mathtools}
\usepackage{parskip}
\usepackage{thmtools}
\usepackage{tikz-cd}
\usepackage{xparse}
\usepackage{xpatch}
\usepackage{xspace}

\usepackage[utf8]{inputenc}
\usepackage[T1]{fontenc}
\usepackage{libertine}
\usepackage[libertine]{newtxmath}
\usepackage[scaled=0.83]{beramono}
\usepackage{eucal}
\usepackage{microtype}
\usepackage{gitinfo2}

\usetikzlibrary{arrows.meta}
\usetikzlibrary{calc}

\usepackage[backend=biber, maxbibnames=99, datamodel=mrnumber, sortcites]{biblatex}

\usepackage[capitalise,noabbrev]{cleveref}

\DeclareFieldFormat{mrnumber}{%
  MR\addcolon\space
  \ifhyperref
    {\href{http://www.ams.org/mathscinet-getitem?mr=#1}{\nolinkurl{#1}}}
    {\nolinkurl{#1}}}

\renewbibmacro*{doi+eprint+url}{%
  \iftoggle{bbx:doi}
    {\printfield{doi}}
    {}%
  \newunit\newblock
  \printfield{mrnumber}%
  \newunit\newblock
  \iftoggle{bbx:eprint}
    {\usebibmacro{eprint}}
    {}%
  \newunit\newblock
  \iftoggle{bbx:url}
    {\usebibmacro{url+urldate}}
    {}}

\DeclareSourcemap{
  \maps[datatype=bibtex]{
    \map{
      \step[fieldset=issn, null]
    }
  }
}
\xpretobibmacro{bib+doi+url}
  {\iffieldundef{doi}
     {}
     {\clearfield{url}}}
  {}{}

\xpretobibmacro{cite+doi+url}
  {\iffieldundef{doi}
     {}
     {\clearfield{url}}}
  {}{}

\AtEveryCite{\upshape}

\declaretheoremstyle[spaceabove = 3pt, spacebelow = 0pt, bodyfont = \itshape]{theorem}
\declaretheoremstyle[spaceabove = 3pt, spacebelow = 0pt]{remark}

\declaretheorem[style=theorem]{theorem}
\declaretheorem[style=theorem, sibling=theorem]{corollary}
\declaretheorem[style=theorem, sibling=theorem]{lemma}
\declaretheorem[style=theorem, sibling=theorem]{proposition}

\declaretheorem[style=remark, sibling=theorem]{definition}

\declaretheorem[style=remark, sibling=theorem]{remark}

\declaretheorem[style=theorem, numberwithin=section, title=Theorem]{alphatheorem}

\crefname{alphatheorem}{Theorem}{Theorems}
\Crefname{alphatheorem}{Theorem}{Theorems}
\crefname{alphaconjecture}{Conjecture}{Conjectures}
\Crefname{alphaconjecture}{Conjecture}{Conjectures}
\crefname{alphacorollary}{Corollary}{Corollaries}
\Crefname{alphacorollary}{Corollary}{Corollaries}
\crefname{alphaproposition}{Proposition}{Propositions}
\Crefname{alphaproposition}{Proposition}{Propositions}

\setlist[enumerate]{font=\normalfont}

\mathchardef\mhyphen="2D

\newcommand\vect[1]{\ensuremath{\boldsymbol #1}}

\newcommand\QM[1]{%
  \ifcase#1
    \mathbf{?}%
    \or \texorpdfstring{\ensuremath{\mathrm{M}(1,2^4;4)}}{M(1,2,2,2,2;4)}%
    \or \texorpdfstring{\ensuremath{\mathrm{M}(1^5,2;3)}}{M(1,1,1,1,1,2;3)}%
    \or \texorpdfstring{\ensuremath{\mathrm{M}(1^4,2^2;3)}}{M(1,1,1,1,2,2;3)}%
    \or \texorpdfstring{\ensuremath{\mathrm{M}(1^7;2)}}{M(1,1,1,1,1,1,1;2)}%
    \else
    \mathbf{?}%
  \fi
}

\newcommand{\framed}{\mhyphen\mathrm{fr}}
\newcommand{\stable}{\mhyphen\mathrm{st}}

\newcommand{\semistable}{\mhyphen\mathrm{sst}}

\DeclareDocumentCommand\modulispace{om}{\IfNoValueTF{#1}{\mathrm{M}{(#2)}}{\mathrm{M}^{#1}(#2)}}
\DeclareDocumentCommand\repspace{om}{\IfNoValueTF{#1}{\mathrm{R}{(#2)}}{\mathrm{R}^{#1}(#2)}}

\newcommand\Aone{\ensuremath{\widetilde{\mathrm{A}}_1}}
\newcommand\AoneA{\ensuremath{\widetilde{\mathrm{A}}_1}(a)}
\newcommand\AoneB{\ensuremath{\widetilde{\mathrm{A}}_1}(b)}
\newcommand\Atwo{\ensuremath{\widetilde{\mathrm{A}}_2}}
\newcommand\Athree{\ensuremath{\widetilde{\mathrm{A}}_3}}
\newcommand\Dfour{\ensuremath{\widetilde{\mathrm{D}}_4}}

\newcommand\tildeA{\ensuremath{\widetilde{\mathrm{A}}}}
\newcommand\tildeD{\ensuremath{\widetilde{\mathrm{D}}}}
\newcommand\tildeE{\ensuremath{\widetilde{\mathrm{E}}}}

\newcommand\can{\ensuremath{\mathrm{can}}}
\newcommand\nilpotent{\ensuremath{\mathrm{nilp}}}

\DeclareMathOperator\Bl{Bl}
\DeclareMathOperator\dimvect{\mathbf{dim}}

\DeclareMathOperator\Exp{Exp}

\DeclareMathOperator\HH{H}
\DeclareMathOperator\Hom{Hom}
\DeclareMathOperator\Ker{Ker}

\DeclareMathOperator\Rep{Rep}
\DeclareMathOperator\Stab{Stab}
\DeclareMathOperator\supp{supp}
\DeclareMathOperator\Sym{Sym}

\tikzset{
  vertex/.style={draw, circle, minimum size=4pt, inner sep=0pt, outer sep=2pt},
  quiver/.style={baseline={(current bounding box.center)}},
}

\newcommand{\QAone}{\begin{tikzpicture}[quiver]
  \node[vertex,label=left:$i$]  (i) at (0,0) {};
  \node[vertex,label=right:$j$] (j) at (1,0) {};
  \draw[->] (i) to[bend left=16] (j);
  \draw[->] (i) to[bend right=16] (j);
\end{tikzpicture}}
\newcommand{\QAtwo}{\begin{tikzpicture}[quiver]
  \node[vertex,label=above:$i$] (i) at (0.5,0.87) {};
  \node[vertex,label=left:$j$]  (j) at (0,0) {};
  \node[vertex,label=right:$k$] (k) at (1,0) {};
  \draw[->] (i)--(j); \draw[->] (j)--(k); \draw[->] (i)--(k);
\end{tikzpicture}}
\newcommand{\QAthree}{\begin{tikzpicture}[quiver,scale=0.6]
  \node[vertex,label=above:$j$] (j) at (0,1) {};
  \node[vertex,label=left:$i$]  (i) at (-1,0) {};
  \node[vertex,label=right:$l$] (l) at (1,0) {};
  \node[vertex,label=below:$k$] (k) at (0,-1) {};
  \draw[->] (i)--(j); \draw[->] (l)--(j); \draw[->] (i)--(k); \draw[->] (l)--(k);
\end{tikzpicture}}
\newcommand{\QDfour}{\begin{tikzpicture}[quiver,scale=0.6]
  \node[vertex,label=right:$j$]         (c) at (0,0) {};
  \node[vertex,label=above left:$i_1$]  (a) at (-1,1) {};
  \node[vertex,label=below left:$i_2$]  (b) at (-1,-1) {};
  \node[vertex,label=above right:$i_3$] (d) at (1,1) {};
  \node[vertex,label=below right:$i_4$] (e) at (1,-1) {};
  \draw[->] (a)--(c); \draw[->] (b)--(c); \draw[->] (d)--(c); \draw[->] (e)--(c);
\end{tikzpicture}}

\newcommand{\QfrAoneA}{\begin{tikzpicture}[quiver]%
  \node[vertex] (i) at (0,0) {};
  \node[vertex] (j) at (1.2,0) {};
  \node[vertex] (f) at (2.4,0) {};
  \draw[->] (i) to[bend left=16] (j);
  \draw[->] (i) to[bend right=16] (j);
  \draw[->] (f) to[bend left=16] (j);
  \draw[->] (f) to[bend right=16] (j);
\end{tikzpicture}}
\newcommand{\QfrAoneB}{\begin{tikzpicture}[quiver]%
  \node[vertex] (i) at (0,0) {};
  \node[vertex] (j) at (1.4,0) {};
  \node[vertex] (f) at (0.7,1) {};
  \draw[->] (i) to[bend left=16] (j);
  \draw[->] (i) to[bend right=16] (j);
  \draw[->] (f)--(i); \draw[->] (f)--(j);
\end{tikzpicture}}
\newcommand{\QfrAtwo}{\begin{tikzpicture}[quiver]%
  \node[vertex] (i) at (0.5,0.87) {};
  \node[vertex] (j) at (0,0) {};
  \node[vertex] (k) at (1,0) {};
  \node[vertex] (f) at (0.5,-0.9) {};
  \draw[->] (i)--(j); \draw[->] (j)--(k); \draw[->] (i)--(k);
  \draw[->] (f)--(j); \draw[->] (f)--(k);
\end{tikzpicture}}
\newcommand{\QfrAthree}{\begin{tikzpicture}[quiver,scale=0.6]%
  \node[vertex] (j) at (0,1) {};
  \node[vertex] (i) at (-1,0) {};
  \node[vertex] (l) at (1,0) {};
  \node[vertex] (k) at (0,-1) {};
  \node[vertex] (f) at (0,0) {};
  \draw[->] (i)--(j); \draw[->] (l)--(j); \draw[->] (i)--(k); \draw[->] (l)--(k);
  \draw[->] (f)--(j); \draw[->] (f)--(k);
\end{tikzpicture}}
\newcommand{\QfrDfour}{\begin{tikzpicture}[quiver,scale=0.6]%
  \node[vertex] (c) at (0,0) {};
  \node[vertex] (a) at (-1,1) {};
  \node[vertex] (b) at (-1,-1) {};
  \node[vertex] (d) at (1,1) {};
  \node[vertex] (e) at (1,-1) {};
  \node[vertex] (f) at (1.9,0) {};
  \draw[->] (a)--(c); \draw[->] (b)--(c); \draw[->] (d)--(c); \draw[->] (e)--(c);
  \draw[->] (f)--(c);
\end{tikzpicture}}

\newcommand{\vbinom}[2]{\genfrac{[}{]}{0pt}{}{#1}{#2}}

\usepackage{xpatch}

\makeatletter
\patchcmd\blx@bblinput{\blx@blxinit}
                      {\blx@blxinit
                      }{}{\fail}
\makeatother

\title{Series of Fano quiver moduli from extended Dynkin quivers}
\author{Pieter Belmans \and Markus Reineke}

\begin{document}
\maketitle

\begin{abstract}
  Associated to small extended Dynkin quivers,
  we construct series of Fano varieties of fixed Picard rank at most five and arbitrary even dimension,
  which admit a very explicit description of their geometry.
  For example, they all admit a flat morphism to a projective space,
  whose general fiber is a product of projective lines.
  We also give a closed formula for all Betti numbers,
  and interpret this formula in terms of 
  an action of a cohomological Hall algebra on the cohomology of
  all the varieties in a series.
\end{abstract}

\tableofcontents

\section{Introduction}
We consider extended Dynkin quivers together with multiples of their unique imaginary Schur root,
and study the framed quiver moduli one can attach to this setting.
Our first result is that,
under a natural symmetry condition on the Euler form,
the morphism induced by forgetting the framing
defines a flat map to a lower-dimensional projective space,
making their geometry amenable to an explicit description.

We also describe
when the natural stability parameter for a framed quiver moduli space
is in the same chamber as the canonical stability parameter,
making the framed quiver moduli space a Fano variety.
This setting leads to an interesting and tractable classification problem,
which results in five infinite series
(and some sporadic cases, which we ignore).
By considering all the varieties in an infinite series at once,
we can exhibit closed expressions for their Betti numbers,
and we uncover an action of a cohomological Hall algebra on the cohomology of the varieties
to give a structural interpretation to this pattern for the Betti numbers. The intrinsic reason for this favourable behaviour is that, since our starting point is an extended Dynkin quiver, all relevant semistable quiver Donaldson-Thomas invariants can be computed and the corresponding cohomological Hall algebra can be described by explicit generators.

The starting point for this classification and these results
was the realization that
one of the Fano fourfolds studied in \cite{2607.12895},
a priori defined using a subspace quiver,
also admits an interpretation in terms of a framing of the extended~$\mathrm{D}_4$-quiver.
It turns out that this alternative perspective 
using Dynkin quivers
is the one that explains the interesting fibration structure described in \S5 of op.~cit.

In this paper we greatly extend this geometric description
in dimension~4 for a single quiver
to a general framework
in all even dimensions
for framings of extended Dynkin quivers.
This gives us an interesting class of quiver moduli
with a particularly elegant and explicit geometry.

\paragraph{Setup}
Let~$Q$ be an extended Dynkin quiver,
let~$\delta$ be its unique imaginary Schur root,
and write~$\langle-,-\rangle$ for the Euler form.
We consider the stability parameter given by~$\partial\colonequals\langle\delta,-\rangle$.
We ask that the antisymmetrized Euler form~$\{-,-\}$
vanishes identically on~$\Ker(\partial)$,
which is the Grothendieck group of the subcategory of regular modules.
This already forces~$Q$ to be of type~$\Aone$, $\Atwo$, $\Athree$, or~$\Dfour$.

Let~$\vect{n}\in\mathbb{N}Q_0$ be a framing vector,
and denote~$\widehat{Q}$ the framed quiver,
with framing vertex~$i_0$.
The dimension vector~$k\delta$ for~$k\geq 1$
induces the dimension vector~$\smash{\widehat{k\delta}\colonequals k\delta+\vect{i}_0}$.
Next, choose a positive linear functional~$\lambda\in\Hom(\mathbb{Z}Q_0,\mathbb{Z})$
and~$C=C_k\in\mathbb{N}$ such that~$\lambda(k\delta)<C\gcd(\partial)$,
and define~$\widehat{\partial}\colonequals C\partial-\lambda+\lambda(k\delta)\vect{i}_0^*$.
The choice of~$\lambda$ and~$C$ does not change the notion of stability,
they are all in the same chamber.

Then we define the framed quiver moduli space
\begin{equation}
  \modulispace[\partial\framed]{Q,k\delta,\vect{n}}
  \colonequals
  \modulispace[\widehat{\partial}\semistable]{\widehat{Q},\widehat{k\delta}}
  =
  \modulispace[\widehat{\partial}\stable]{\widehat{Q},\widehat{k\delta}},
\end{equation}
and we have the forgetful morphism
\begin{equation}
  \label{equation:forgetful-introduction}
  p\colon\modulispace[\partial\framed]{Q,k\delta,\vect{n}}
  \to
  \modulispace[\partial\semistable]{Q,k\delta}
  \cong
  \mathbb{P}^k.
\end{equation}
See \cref{section:construction} for more details on the construction and the basic properties.

\paragraph{The forgetful morphism}
The Luna stratification of
the unframed moduli space in \eqref{equation:forgetful-introduction}
encodes how a polystable representation decomposes into stable summands.
The symmetry assumption on the Euler form
makes it possible to obtain the following uniform description of the fibers
of the forgetful morphism.

\begin{alphatheorem}
  \label{theorem:forgetful-morphism}
  For every nonzero framing vector~$\vect{n}$ and every~$k\geq 1$,
  we have the following properties for
  the forgetful morphism~$p$ in \eqref{equation:forgetful-introduction}:
  \begin{enumerate}
    \item it is flat of relative dimension~$(\vect{n}\cdot\delta-1)k$,
    \item its general fiber is~$(\mathbb{P}^{\vect{n}\cdot\delta-1})^k$, and
    \item over every Luna stratum of the base,
      its fibers are products of
      framed nilpotent moduli spaces for the Jordan quiver
      and the oriented two-cycle.
  \end{enumerate}
\end{alphatheorem}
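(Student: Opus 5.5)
The plan is to analyse $p$ fiberwise via the Luna stratification of $\modulispace[\partial\semistable]{Q,k\delta}\cong\mathbb{P}^k$, and to deduce flatness from the fiber dimensions by miracle flatness.

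\textbf{Step 1: the base and its strata.} First we recall how polystable points decompose. A $\partial$-semistable representation of dimension $k\delta$ has a Jordan--Hölder filtration by $\partial$-stable regular modules. These stables are the homogeneous simples $S_x$ of dimension $\delta$, with $x$ ranging over $\mathbb{P}^1$ minus the exceptional points, together with the simple regular modules in the non-homogeneous tubes. For $\Aone$ there are no non-homogeneous tubes. For $\Atwo$ and $\Athree$ there is one tube of rank $2$ or $3$ (respectively two tubes of rank $2$), and for $\Dfour$ three tubes of rank $2$. The symmetry hypothesis that $\{-,-\}$ vanishes on $\Ker\partial$ forces the exceptional tubes to have rank at most $2$; in each rank-$2$ tube the two simples $E,E'$ satisfy $\dim\mathrm{Ext}^1(E,E')=\dim\mathrm{Ext}^1(E',E)=1$. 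A polystable point is therefore $\bigoplus_x S_x^{m_x}\oplus\bigoplus_{\text{tubes}}(E\oplus E')^{m}$. By the local quiver setting (Le Bruyn--Procesi), the Ext-quiver of these stables is a disjoint union of Jordan quivers (one loop at each $S_x$) and oriented two-cycles (one per rank-$2$ tube).

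\textbf{Step 2: the fibers as framed nilpotent moduli.} The fiber of $p$ over a polystable point $M$ parametrises framed semistable representations whose underlying representation is $S$-equivalent to $M$. Using the étale-local (Luna slice) description of framed moduli over the unframed ones, this fiber is the framed moduli space of the local quiver with nilpotent representations, with dimension vector given by the multiplicities. The framing at each local vertex $S$ is $\vect{n}\cdot\dimvect S$, which is $\vect{n}\cdot\delta$ for $S_x$ and $\vect{n}\cdot\dimvect E$ for a tube simple $E$. The local quiver is a disjoint union, so the fiber is a product of framed nilpotent moduli spaces of the Jordan quiver and of the oriented two-cycle. This proves (3).

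\textbf{Step 3: generic fiber and dimensions.} Over a general point, $M=\bigoplus_{i=1}^k S_{x_i}$ with distinct $x_i$, so each factor is the framed nilpotent Jordan moduli space in dimension $1$. That space is $\mathbb{P}^{\vect{n}\cdot\delta-1}$ (the quot-type space of a point), which gives (2). Next we need the dimension of each fiber. Framed nilpotent moduli of the Jordan quiver with dimension $m$ and framing $r$ (punctual Quot schemes of $\mathcal{O}^r$ on $\mathbb{A}^1$) have dimension $m(r-1)$, which is the expected value. For the two-cycle with multiplicity $m$ at each vertex and framings $a,b$ with $a+b=\vect{n}\cdot\delta$, we must show the nilpotent locus has dimension $m(a+b-1)$. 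Here I would use a cell decomposition or a stratification by Jordan type, or reduce to the Jordan case by composing the two arrows.

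\textbf{Step 4: flatness.} All fibers then have dimension $(\vect{n}\cdot\delta-1)k=\dim\modulispace[\partial\framed]{Q,k\delta,\vect{n}}-k$. The source is smooth, since the framed moduli space is stable and $\widehat{Q}$ is acyclic, and the base $\mathbb{P}^k$ is smooth. Miracle flatness then gives (1).

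The main obstacle is the dimension bound in Step 3 for the oriented two-cycle nilpotent framed moduli, together with justifying the identification in Step 2 at non-closed-orbit points. For the latter I would follow the Luna slice argument for framed moduli.
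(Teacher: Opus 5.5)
Your outline is the same as the paper's: Luna strata of $\mathbb{P}^k$, local quivers that are disjoint unions of Jordan loops and oriented two-cycles, and identification of the fibres with framed nilpotent moduli spaces. That identification is exactly \cref{proposition:fibres-framed}, so Step~2 needs no new slice argument. Then come the general fibre and miracle flatness. The gap is the step that carries part~(1), the dimension of the two-cycle factor, which you leave open; as written, flatness is not proved. Composing the two arrows does not by itself reduce this to the Jordan case. The map $(A,B)\mapsto BA$ is far from equidimensional: its fibre over $0$, the pairs with $BA=0$, has dimension roughly $\tfrac54m^2$, against $m^2$ over invertible matrices. So you would still face a stratified count.

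The missing idea is to pass to the nilpotent variety.
\begin{itemize}
\item Framed-stable tuples form an open subset of $\mathrm{N}(Q,\vect{d})\times\prod_i\Hom(\mathbb{C}^{n_i},V_i)$ on which $\mathrm{GL}(\vect{d})$ acts freely. Hence the framed nilpotent moduli space has dimension at most $\vect{n}\cdot\vect{d}+\dim\mathrm{N}(Q,\vect{d})-\dim\mathrm{GL}(\vect{d})$.
\item Every nilpotent representation of $\mathrm{C}_2$ of dimension $(m,m)$ has a composition series with one-dimensional simple subquotients. So $\mathrm{N}(\mathrm{C}_2,(m,m))$ is covered by the images of finitely many varieties of pairs $(F_\bullet,(A,B))$. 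Here $F_\bullet$ is a graded flag of fixed type $(i_1,\ldots,i_{2m})$, and $A,B$ map $F_j$ into $F_{j-1}$.
\item Each such variety is a vector bundle over a product of two full flag varieties, of dimension $2\binom{m}{2}$. Its fibre dimension is the number of pairs $l<j$ with $i_l\neq i_j$. This is $m^2$ for every type, because there is exactly one arrow in each direction.
\item Hence $\dim\mathrm{N}(\mathrm{C}_2,(m,m))\leq 2\binom{m}{2}+m^2=2m^2-m$. The paper quotes this as an equality, from a dimension formula for nilpotent varieties of symmetric quivers.
\item The two-cycle factor therefore has dimension at most $(a+b)m+(2m^2-m)-2m^2=(a+b-1)m$. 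The same count gives $m^2-m$ and $(r-1)m$ in the Jordan case, so you need not appeal to punctual Quot schemes.
\end{itemize}
Only this upper bound is needed. The source is irreducible of dimension $(\vect{n}\cdot\delta)k$ and $p$ is surjective, so every component of every fibre has dimension at least $(\vect{n}\cdot\delta-1)k$. Miracle flatness then applies.

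Two smaller points about Step~1. First, $\Atwo$ always has tubular type $(2,1)$. A rank-$3$ tube arises only for the $(3,1)$ orientation of $\tildeA_3$, which the symmetry hypothesis excludes. Second, you should justify that $E$ and $E'$ appear with equal multiplicity in a polystable representation of dimension $k\delta$. One way is the linear independence in $\Ker(\partial)$ of $\delta$ together with one regular simple class from each rank-$2$ tube.
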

The detailed description of the fibers
is given in \cref{proposition:fibers-forgetful},
and leads to the flatness and description of the general fiber
in \cref{corollary:flat-forgetful}.

In preparation for the special cases studied next,
we explain in \cref{subsection:betti}
how the quiver DT/PT correspondence
can be used to study Betti numbers of these framed moduli spaces.

\paragraph{The classification}
If we ask that the framing stability parameter is
in the same chamber as the canonical stability parameter
(which is defined without using its framed structure),
then the framed moduli space is a Fano variety.
Write~$X_k^{\vect{n}}(Q)=\modulispace[\partial\framed]{Q,k\delta,\vect{n}}$.
This brings us in the setting of \cite{2607.12895}.
It turns out that there is an interesting and tractable classification problem
for this setting,
which leads to the following result,
identifying five interesting series of Fano quiver moduli
constructed from extended Dynkin quivers.

\begin{alphatheorem}
  \label{theorem:5-series}
  The five quiver and framing pairs in \cref{table:5-quivers}
  satisfy~$\vect{n}\cdot\delta=2$ and~$n_i\geq 1$ whenever~$\partial(\vect{i})\leq 0$.

  For each such pair and every~$k\geq 1$,
  the framed moduli space~$X_k^{\vect{n}}(Q)$
  is a smooth Fano variety of dimension~$2k$ and Picard rank~$|Q_0|$.
  It is rational, of pure Hodge--Tate type, and infinitesimally rigid.
  Its index is~$2$ in the case~$\AoneA$ and~$1$ in the other four cases.

  For the even Betti numbers we have that~$\mathrm{b}_{2i}(X_k^{\vect{n}}(Q))=a_{\min\{i,2k-i\}}$, for~$i=1,\ldots,2k$,
  where the~$a_i$ are defined by
  \begin{equation}
    \sum_{i\geq 0}a_ix^i=\frac{(1+x)^{|Q_0|}}{(1-x^2)^3}.
  \end{equation}
\end{alphatheorem}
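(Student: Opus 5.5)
The proof splits into four parts: the combinatorial verification from \cref{table:5-quivers}, the structural properties (smooth, Fano, dimension, Picard rank, rationality, rigidity), the index computation, and the Betti numbers. The combinatorics come first. For each of the five pairs I compute~$\delta$ and~$\partial=\langle\delta,-\rangle$ vertexwise, then check that~$\vect{n}\cdot\delta=2$ and that~$n_i\geq 1$ at every vertex with~$\partial(\vect{i})\leq0$; this is a finite table lookup.

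For the structural part I apply \cref{theorem:forgetful-morphism} with~$\vect{n}\cdot\delta=2$. This gives a flat morphism~$X_k^{\vect{n}}(Q)\to\mathbb{P}^k$ of relative dimension~$k$ whose general fiber is~$(\mathbb{P}^1)^k$, so the dimension is~$2k$. Smoothness and projectivity follow because stability equals semistability and the dimension vector~$\widehat{k\delta}$ is indivisible, having coordinate~$1$ at~$i_0$. Quiver moduli with indivisible dimension vector have Picard rank equal to the number of vertices of~$\widehat{Q}$ minus one, which is~$|Q_0|$, and are rational. They are also of Hodge--Tate type with cohomology generated by tautological classes (the results of King--Walter and Reineke and others), and this gives pure Hodge--Tate type. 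For the Fano property I use the criterion from \cite{2607.12895}: it suffices that~$\widehat{\partial}$ lies in the chamber of the canonical parameter~$\langle\widehat{k\delta},-\rangle-\langle-,\widehat{k\delta}\rangle$. Since~$\{-,-\}$ vanishes on~$\Ker\partial$, the canonical parameter restricted to~$Q$ is a multiple of~$\partial$ plus correction terms coming from the framing arrows, namely~$\pm\vect{n}$. The condition~$n_i\geq1$ at the vertices with~$\partial(\vect{i})\leq0$ is exactly what I expect to force every proper subrepresentation to be destabilized in the same way for both parameters. Concretely, I would check the finitely many wall inequalities for subdimension vectors~$d'\leq\widehat{k\delta}$ by splitting according to whether~$d'$ contains the framing vertex and using~$\partial(d')\geq0$ or~$\leq0$. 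Rigidity follows from the vanishing~$\HH^1(T)=0$ known for Fano quiver moduli (op.~cit.). For the index, I write the anticanonical class as the canonical stability parameter and compute its divisibility in the Picard lattice~$\mathbb{Z}^{|Q_0|}$, which is modulo the relation coming from the framing coordinate. I expect the answer to be~$2$ exactly for~$\AoneA$, where all coefficients come in pairs of double arrows.

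For the Betti numbers I use \cref{subsection:betti}, the DT/PT correspondence. The Poincaré series~$\sum_k P(X_k^{\vect{n}})t^k$ is expressed as a plethystic exponential of the~$\partial$-semistable DT invariants on the ray~$\mathbb{N}\delta$, twisted by the framing. For extended Dynkin quivers these invariants are explicit: the regular category splits into one~$\mathbb{P}^1$-family of homogeneous tubes plus the exceptional tubes, and this is what yields the factor~$(1-x^2)^{-3}$ together with the factors~$(1+x)$ for each vertex. Alternatively I can compute fiberwise. By part~(3) of \cref{theorem:forgetful-morphism} and a stratification of~$\mathbb{P}^k=\Sym^k\mathbb{P}^1$ by Luna strata, the Poincaré polynomial is a sum over strata of (cohomology of the stratum)~$\times$~(cohomology of products of framed nilpotent Jordan and two-cycle moduli). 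Pure Hodge--Tate type guarantees that such a cell-like additivity argument is valid. I would then sum the generating function over~$k$, identify the coefficient, and use Poincaré duality to obtain~$\mathrm{b}_{2i}=a_{\min\{i,2k-i\}}$.

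The main obstacle is this last generating-function identity. The Fano chamber verification is tedious but finite; the Betti computation requires correctly combining the contributions from the exceptional tubes with the~$\mathbb{P}^1$ family and seeing that the result truncates to~$a_i$ for~$i\leq k$.
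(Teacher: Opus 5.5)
Your overall architecture (direct inspection, then \cref{theorem:quiver-moduli-properties} in the canonical chamber, then DT/PT) matches the paper's. However, two load-bearing steps are missing.

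\textbf{Ample stability.} You never verify ample stability of $\widehat{k\delta}$ for $\widehat\Theta$. Your substitute claim, that an indivisible dimension vector has Picard rank $|\widehat Q_0|-1$, is false. For the quiver $1\to 2$ with $\vect d=(1,1)$ and $\Theta=(1,-1)$ the moduli space is a point, because the unstable locus $\{a=0\}$ is a divisor. Three of the claimed properties need the unstable locus to have codimension at least two:
\begin{itemize}
  \item the Picard rank $|Q_0|$;
  \item the index, computed as the divisibility $\gcd(2k,n_i : i\in Q_0)$ of $\Theta_\can=2k\partial-\vect n+2k\vect i_0^*$;
  \item infinitesimal rigidity, which does not follow from being Fano.
\end{itemize}
The paper checks the numerical criterion in two cases.
\begin{itemize}
  \item If $0<\vect e\le k\delta$ and $\widehat\Theta(\vect e)>0$, then $\partial(\vect e)>0$, so $\langle\vect e,\vect e\rangle_Q\ge 1$. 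Using $\langle\vect e,\delta\rangle=-\partial(\vect e)$, one gets $\langle\vect e,k\delta-\vect e+\vect i_0\rangle_{\widehat Q}=-k\partial(\vect e)-\langle\vect e,\vect e\rangle_Q\le -2$.
  \item For subdimension vectors $\vect e+\vect i_0$, one sets $\vect f=k\delta-\vect e$ and bounds $k\partial(\vect f)-\langle\vect f,\vect f\rangle_Q-\vect n\cdot\vect f\le -2$ by splitting on the sign of $\partial(\vect f)$.
\end{itemize}
Relatedly, your chamber comparison is left as a case check that would have to be uniform in $k$. The paper avoids it: take $\lambda=\partial+c\,\vect n$ and $C=2kc+1$ with $c>-\partial(\vect i)$ for all $i$, so that $\widehat\Theta=c\,\Theta_\can$ exactly. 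Positivity of $\lambda$ is precisely where $n_i\ge 1$ for $\partial(\vect i)\le 0$ is used, and $\lambda(k\delta)=2kc<C$ uses $\vect n\cdot\delta=2$. The shape of $\Theta_\can$ comes from $(\delta,-)=0$, not from the vanishing of $\{-,-\}$ on $\Ker\partial$.

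\textbf{Betti numbers.} You name the right tool but not the computation, and your heuristic is off. The DT/PT identity must be used over all of $\Lambda=\Ker\partial\cap\mathbb{N}Q_0$, not just $\mathbb{N}\delta$. The needed input is $\vect n\cdot\alpha=1$ for every $\alpha\in P$: your hypothesis forces $\vect n\cdot\alpha\ge 1$, and $\vect n\cdot\alpha_p+\vect n\cdot\overline{\alpha}_p=2$. Since only complementary pairs $\alpha_p,\overline{\alpha}_p$ combine to multiples of $\delta$, this gives
\[
  \sum_k P_{X_k^{\vect n}(Q)}(q)x^k=(1+x)^{|Q_0|-2}/\bigl((1-qx)(1-x)^2(1-q^{-1}x)\bigr).
\]
This is not $F(x)=(1+x)^{|Q_0|}/(1-x^2)^3$: the exceptional tubes contribute only $|Q_0|-2$ factors $(1+x)$. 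The missing step is the identity
\[
  \frac{1-x^2}{(1-qx)(1-q^{-1}x)}=\frac{1}{1-qx}+\frac{1}{1-q^{-1}x}-1.
\]
It shows that the coefficient of $x^k$ is $a_k+\sum_{j=1}^k a_{k-j}(q^j+q^{-j})$, which yields $\mathrm{b}_{2i}=a_{\min\{i,2k-i\}}$ directly.

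Your fibrewise alternative fails as stated. Over a Luna stratum the map is only \'etale-locally trivial, and the monodromy permutes the factors of the fibre. So the answer is not a sum of products (stratum)$\times$(fibre), and purity does not repair this. For $\AoneA$ with $k=2$, the naive count uses fibre $(\mathbb{P}^1)^2$ over $\Sym^2\mathbb{P}^1\setminus\Delta$ and a quadric cone over the diagonal. It gives would-be Betti numbers $1,2,3,3,1$, which are not even palindromic, instead of $1,2,4,2,1$. To fix this you have to take $S_2$-invariants.
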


\begin{table}[t]
  \centering
  \begin{tabular}{ccccc}
    \toprule
    & $Q$ & $\delta$ & $\vect{n}$ & $\widehat{Q}$ \\
    \midrule
    $\AoneA$  & \QAone   & $(1,1)$       & $2\vect{j}$         & \QfrAoneA  \\
    \addlinespace[10pt]
    $\AoneB$  & \QAone   & $(1,1)$       & $\vect{i}+\vect{j}$ & \QfrAoneB  \\
    \addlinespace[10pt]
    $\Atwo$   & \QAtwo   & $(1,1,1)$     & $\vect{j}+\vect{k}$ & \QfrAtwo   \\
    \addlinespace[10pt]
    $\Athree$ & \QAthree & $(1,1,1,1)$   & $\vect{j}+\vect{k}$ & \QfrAthree \\
    \addlinespace[10pt]
    $\Dfour$  & \QDfour  & $(1,1,1,1;2)$ & $\vect{j}$          & \QfrDfour  \\
    \bottomrule
  \end{tabular}
  \caption{The five extended Dynkin quivers and framing data, together with their imaginary roots and framed quivers.}
  \label{table:5-quivers}
\end{table}

The description of the Betti numbers in \cref{theorem:5-series}
is much more explicit than the general results in \cref{subsection:betti}.
There is a conceptual explanation for this nice behavior:
the symmetry of the Euler form gives
a semistable cohomological Hall algebra action on
the cohomology of all varieties in a series.
The resulting cyclic module provides a structural explanation
for considering their Betti numbers together;
see \cref{section:coha} for more details.

In \cref{section:low-dimensional}
we discuss what happens for low-dimensional cases
of the Fano varieties in \cref{theorem:5-series}:
we compute some numerical invariants,
which ought to be useful in comparing our constructions to the literature.
For~$k=1$, the del Pezzo surfaces in the five series
are~$\mathbb{P}^1\times\mathbb{P}^1$
and the blowups of~$\mathbb{P}^2$ at one to four points,
see \cref{subsection:surfaces}.
For~$k=2$, the Fano fourfolds in the five series
come equipped with forgetful maps to~$\mathbb{P}^2$,
which are involution surface bundles with mild degeneration,
see \cref{subsection:fourfolds}.

\paragraph{AI disclosure}
No generative AI was used for the results in this paper.

\paragraph{Acknowledgements}
M.R.~would like to thank T.~Peerenboom for advice on Cohomological Hall algebras.
P.B.~was partially supported by NWO (\href{https://doi.org/10.61686/RZKLF82806}{\texttt{doi:10.61686/RZKLF82806}}).
This work was done during the residency of M.R.~at Utrecht University
as the 2025--2026 Springer Visiting Professor,
and we gratefully acknowledge the support of the Utrecht Geometry Centre.

\section{Recollections on quiver moduli spaces}
We recall the quiver notation and stability conventions used in this paper.
For fuller accounts, see, e.g., \cite{MR2484736,MR3727119};
see also \cite[\S2]{2607.12895} for the same setup in a related paper.

\subsection{Quivers and quiver moduli}
\label{subsection:quivers-and-moduli}
Let~$Q=(Q_0,Q_1)$ be a quiver,
with the set of vertices~$Q_0$ and the set of arrows~$Q_1$.
We consider the~$\mathbb{Z}$-lattice~$\mathbb{Z}Q_0$
spanned by basis elements~$\vect{i}$, indexed by~$i\in Q_0$.
On~$\mathbb{Z}Q_0$ we have the \emph{Euler form}~$\langle-,-\rangle$, defined as
\begin{equation}
  \label{equation:euler-form}
  \langle\vect{d},\vect{e}\rangle\colonequals\sum_{i\in Q_0}d_ie_i-\sum_{\alpha\colon i\to j}d_ie_j.
\end{equation}
Its symmetrization~$(-,-)$ is defined as~$(\vect{d},\vect{e})\colonequals\langle\vect{d},\vect{e}\rangle+\langle\vect{e},\vect{d}\rangle$.
Its antisymmetrization~$\{-,-\}$ is defined as~$\{\vect{d},\vect{e}\}\colonequals\langle\vect{d},\vect{e}\rangle-\langle\vect{e},\vect{d}\rangle$.

We will work over the base field~$\mathbb{C}$ of complex numbers.
We will consider finite-dimensional representations~$V$ of the quiver~$Q$,
which are given by~$\mathbb{C}$-vector spaces~$V_i$ for~$i\in Q_0$,
and~$\mathbb{C}$-linear maps~$V(\alpha)\colon V_i\rightarrow V_j$ for~$\alpha\in Q_1$.
The dimension vector~$\dimvect(V)\in\mathbb{N}Q_0\subset\mathbb{Z}Q_0$
of~$V$ is given as~$\dimvect(V)=\sum_{i\in Q_0}(\dim_\mathbb{C}V_i)\vect{i}$.

\paragraph{Root systems}
For a quiver~$Q$ without loops,
the symmetrized Euler form determines a root system with set of positive roots~$\Delta_+(Q)$.
By Kac's theorem,
these are precisely the dimension vectors of indecomposable representations of~$Q$.
We say that
a positive root~$\alpha$ is real if~$\langle\alpha,\alpha\rangle=1$,
and imaginary otherwise.
For an extended Dynkin quiver,
the positive imaginary roots are the multiples of its indivisible null root~$\delta$;
see \cite[\S7.3]{MR3727119}.

\paragraph{Quiver moduli}
For a dimension vector~$\vect{d}$,
the affine space~$\Rep(Q,\vect{d})$ parametrizes representations of~$Q$ on fixed vector spaces of dimensions~$d_i$,
and~$\mathrm{G}_{\vect{d}}=\prod_{i\in Q_0}\mathrm{GL}_{d_i}$ acts by change of basis.
A stability parameter for~$\vect{d}$ is a~$\Theta\in\mathbb{Z}Q_0^\vee$ for which~$\Theta(\vect{d})=0$.
A representation~$V$ of dimension~$\vect{d}$ is~$\Theta$-semistable
(respectively,~$\Theta$-stable)
if~$\Theta(\dimvect(U))\leq 0$
(respectively, $<0$)
for every nonzero proper subrepresentation~$U\subset V$.
The moduli space~$\modulispace[\Theta\semistable]{Q,\vect{d}}$
parametrizes the corresponding polystable representations,
while~$\modulispace[\Theta\stable]{Q,\vect{d}}$ is its stable locus.

The \emph{canonical stability parameter} for~$\vect{d}$ is
\begin{equation}
  \Theta_\can=\{\vect{d},-\}=\langle\vect{d},-\rangle-\langle-,\vect{d}\rangle.
\end{equation}
The hyperplanes~$\Theta(\vect{e})=0$ for proper nonzero~$\vect{e}<\vect{d}$
divide the space of stability parameters into chambers;
parameters in the same chamber define the same semistable locus.
We call~$\vect{d}$ \emph{$\Theta$-coprime}
if~$\Theta(\vect{e})\neq 0$ for every such~$\vect{e}$.
We call it \emph{$\Theta$-amply stable}
if the complement of the semistable locus in~$\Rep(Q,\vect{d})$ has codimension at least two.
When the stable locus is nonempty,
the stronger numerical condition
\begin{equation}
  \langle\vect{e},\vect{d}-\vect{e}\rangle\leq -2
  \quad\text{whenever}\quad
  0<\vect{e}<\vect{d}\text{ and }\Theta(\vect{e})>0
\end{equation}
implies ample stability by the unstable-locus argument of
\cite[Proposition~5.1]{MR3683503}
and is the criterion we use below.

The following theorem summarizes the properties of quiver moduli in the ideal circumstances.
\begin{theorem}
  \label{theorem:quiver-moduli-properties}
  Assume the following conditions:
  \begin{itemize}[noitemsep]
    \item $Q$ is acyclic,
    \item $\Theta$ does not belong to a proper wall,
    \item $\vect{d}$ is a~$\Theta$-coprime and sincere dimension vector, and
    \item $\vect{d}$ is~$\Theta$-amply stable.
  \end{itemize}
  Then~$\modulispace[\Theta\semistable]{Q,\vect{d}}=\modulispace[\Theta\stable]{Q,\vect{d}}$ is
  a smooth projective variety of dimension~$1-\langle\vect{d},\vect{d}\rangle$,
  and Picard rank~$|\supp(\vect{d})|-1$.
  It is rational, of pure Hodge--Tate type,
  its space of global vector fields is isomorphic to~$\mathrm{HH}^1(\mathbb{C}Q)$,
  and it is infinitesimally rigid.
  If~$\Theta=\Theta_\can$ (or simply in the same chamber),
  then it is a Fano variety,
  of index~$\gcd(\Theta_\can)=\gcd\{\Theta_{\can,i}\mid i\in Q_0\}$.
\end{theorem}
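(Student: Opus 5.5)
This statement is a recollection: it collects standard facts from the literature on quiver moduli. The plan is therefore to assemble each property from the known result that gives it, and to check that the listed hypotheses are exactly what each of those results needs.

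\textbf{Basic geometry.} First, the semistable and stable loci coincide. Since $\vect{d}$ is $\Theta$-coprime, no proper subdimension $\vect{e}$ can satisfy $\Theta(\vect{e})=0$, so semistability implies stability.

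Next, smoothness and dimension. Because $Q$ is acyclic, $\mathrm{M}^{\Theta\semistable}(Q,\vect{d})$ is projective, being a GIT quotient of an affine space whose invariant ring is $\mathbb{C}$. On the stable locus the action of $\mathrm{G}_{\vect{d}}/\mathbb{G}_m$ is free, so the quotient is a geometric quotient of a smooth variety by a free action. Hence it is smooth of dimension
\begin{equation*}
  \dim\Rep(Q,\vect{d})-\dim\mathrm{G}_{\vect{d}}+1=1-\langle\vect{d},\vect{d}\rangle .
\end{equation*}

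\textbf{Picard group.} I would use King's description of line bundles as descended characters. Ample stability makes the unstable locus have codimension at least two, so
\begin{equation*}
  \operatorname{Pic}=\operatorname{Pic}^{\mathrm{G}_{\vect{d}}}(\Rep(Q,\vect{d})^{\Theta\stable})=\operatorname{Pic}^{\mathrm{G}_{\vect{d}}}(\Rep(Q,\vect{d}))=\text{character group of }\mathrm{G}_{\vect{d}}/\mathbb{G}_m .
\end{equation*}
For a sincere $\vect{d}$ this has rank $|Q_0|-1=|\supp(\vect{d})|-1$. The coprimality assumption supplies the universal family needed for the descent.

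\textbf{Rationality, Hodge type, rigidity, vector fields.} For rationality I would cite Schofield's theorem on birational types of quiver moduli in the coprime case. Pure Hodge--Tate type, with vanishing odd cohomology, follows from the Harder--Narasimhan/Reineke point count, which is polynomial in $q$, combined with smoothness and projectivity. For deformations, vector fields and rigidity I would cite Franzen--Reineke--Sabatini and the related work of Belmans--Franzen--Petrella--Reineke: under ample stability $\HH^i(T)=0$ for $i\geq1$ and $\HH^0(T)\cong\mathrm{HH}^1(\mathbb{C}Q)$.

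\textbf{Fano property and index.} Here the key input is the computation of the canonical bundle. Using the tautological bundles and ample stability (again to extend from the stable locus), one identifies $\omega^\vee$ with the line bundle attached to the character $\Theta_\can$. When $\Theta$ lies in the chamber of $\Theta_\can$, this character is ample on the quotient by the Hilbert--Mumford numerical criterion, so the variety is Fano.

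For the index, the Picard group is the character lattice, which is primitive in $\mathbb{Z}Q_0^\vee$ modulo the relation coming from $\vect{d}$. The divisibility of $\Theta_\can$ in this lattice is then $\gcd(\Theta_\can)$.

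\textbf{Main obstacle.} The step needing the most care is the index. One has to check that the Picard lattice is the full character lattice, which holds only under coprimality. One also has to check that the divisibility of $\Theta_\can$ is not altered by the quotient by $\mathbb{Z}\vect{d}$-type relations. Both are handled in the cited Fano paper, and I would defer to it there.
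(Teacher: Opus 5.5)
\textbf{The gap: strong versus ample stability.} The problem is in your vector-field and rigidity step. You assert $\HH^i(T)=0$ for $i\geq1$ and $\HH^0(T)\cong\mathrm{HH}^1(\mathbb{C}Q)$ ``under ample stability''. The results you would be invoking, \cite[Theorem~B]{MR4883104} for vector fields and \cite[Corollary~D]{MR4954467} for rigidity, are proved under \emph{strong} ample stability. That is the numerical condition $\langle\vect{e},\vect{d}-\vect{e}\rangle\leq-2$ for $0<\vect{e}<\vect{d}$ with $\Theta(\vect{e})>0$, or the Harder--Narasimhan-type variant of \cite[Remark~1.3]{MR4954467}. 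This is stronger than the codimension-two hypothesis in the statement, so your citations do not cover the theorem as stated. The paper flags exactly this caveat. It closes the gap by appealing to \cite{2610.03302}, where the assumption is weakened to ample stability, and you need that input too. Also, Franzen--Reineke--Sabatini \cite{MR4352662} is the source for the Picard rank and for the Fano property and index, not for vector fields or rigidity.

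\textbf{The rest matches the paper.} The paper's proof is likewise an assembly of citations. Coprimality gives $\mathrm{sst}=\mathrm{st}$ and smoothness, acyclicity gives projectivity, and rationality is Schofield \cite[Theorem~6.4]{MR1914089}. Your descent arguments for $\operatorname{Pic}$ and for identifying $\omega^\vee$ with the character $\Theta_\can$ are the content of \cite[Proposition~3.1, Theorem~4.2]{MR4352662}.

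For pure Hodge--Tate type the paper instead cites King--Walter \cite[Theorem~3]{MR1324213}. Your route through Reineke's polynomial point count also works, but only after you invoke Katz's theorem to turn the count into an $E$-polynomial in $xy$. Purity of a smooth projective variety then gives $h^{p,q}=0$ for $p\neq q$.

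Finally, the index is not the obstacle you describe. The group $\operatorname{Pic}\cong\{\theta\in\mathbb{Z}Q_0^\vee:\theta(\vect{d})=0\}$ is the kernel of a linear functional, hence saturated in $\mathbb{Z}Q_0^\vee$. So $\Theta_\can=m\theta'$ with $\theta'$ integral forces $\theta'(\vect{d})=0$, and the divisibility is simply $\gcd(\Theta_\can)$; no quotient by $\mathbb{Z}\vect{d}$ is involved.
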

The smoothness follows from the fact that semistability agrees with stability
under the coprimality assumption,
and the projectivity follows from the assumption that~$Q$ is acyclic.
The Picard rank is then given in~\cite[Proposition~3.1]{MR4352662}.
The rationality of coprime quiver moduli is established in~\cite[Theorem~6.4]{MR1914089}.
The pure Hodge--Tate type is~\cite[Theorem~3]{MR1324213}.
The global vector fields isomorphism is~\cite[Theorem~B]{MR4883104}
(with Happel's computation of~$\mathrm{HH}^1(\mathbb{C}Q)$ in~\cite{MR1035222}).
The infinitesimal rigidity corresponds to~\cite[Corollary~D]{MR4954467}.
The Fano property and index are~\cite[Theorem~4.2]{MR4352662}.

An important caveat to make is
that the standing assumption in \cite{MR4883104,MR4954467}
is that of \emph{strong} ample stability
(or as explained in \cite[Remark~1.3]{MR4954467},
it suffices that a certain inequality holds
for every non-trivial Harder--Narasimhan type).
This condition is replaced by the weaker condition
of ample stability in \cite{2610.03302}.

\begin{remark}
  \label{remark:non-coprime}
  We are not always in the ideal situation of \cref{theorem:quiver-moduli-properties}:
  we will also have to consider \emph{multiples} of a Schur root,
  so the coprimality will necessarily fail.
  However, in the very special case of extended Dynkin quivers,
  we will be able to identify these moduli spaces with projective spaces,
  and thus we stay in the world of smooth projective varieties.
\end{remark}

\subsection{Framed quiver moduli}
\label{subsection:framed-quiver-moduli}
The rest of the paper crucially uses framed quiver moduli
(or smooth models),
as introduced in \cite[\S3]{MR2511752}.
We recall this setup, and the most important properties.

\begin{definition}
  \label{definition:framed}
  Given a quiver~$Q$,
  dimension vector~$\vect{d}$,
  a stability parameter~$\Theta\in\mathbb{Z}Q_0^\vee$
  such that~$\Theta(\vect{d})=0$,
  and a framing datum~$\vect{n}\in \mathbb{N}Q_0$,
  we define the \emph{framed quiver setting} as
  \begin{itemize}[noitemsep]
    \item the quiver $\widehat{Q}$, by adding a vertex $i_0$ to $Q_0$,
      together with $n_i$ arrows~$i_0\rightarrow i$ for all~$i\in Q_0$;
    \item the dimension vector $\widehat{\vect{d}}\colonequals\vect{d}+\vect{i}_0\in\mathbb{N}\widehat{Q}_0$;
    \item for a choice of a positive $\lambda\in\mathbb{Z}Q_0^\vee$ and a positive integer $C$
      such that $\lambda(\vect{d})<C\gcd(\Theta)$,
      the stability parameter
      \begin{equation}
        \widehat{\Theta}=\widehat{\Theta}_{\lambda,C}\colonequals C\Theta-\lambda+\lambda(\vect{d})\vect{i}_0^*.
      \end{equation}
  \end{itemize}
  Using this, we define the \emph{framed quiver moduli space} (or \emph{smooth model}) as
  \begin{equation}
    \modulispace[\Theta\framed]{Q,\vect{d},\vect{n}}
    \colonequals
    \modulispace[\widehat{\Theta}\semistable]{\widehat{Q},\widehat{\vect{d}}}
  \end{equation}
\end{definition}
We have the following result,
which also explains why we are not including~$\lambda$ and~$C$ in our notation;
see \cite[Proposition~3.6]{MR2511752}.

\begin{theorem}
  \label{theorem:framed}
  The framed moduli space $\modulispace[\Theta\framed]{Q,\vect{d},\vect{n}}$
  does not depend on the choices for $\lambda$ and~$C$,
  since all resulting stability parameters belong to the same chamber in $\Stab({\widehat{\vect{d}}})$.
  It is always smooth and irreducible, if non-empty,
  of dimension $\vect{n}\cdot\vect{d}-\langle\vect{d},\vect{d}\rangle$,
  and admits a projective map
  \begin{equation}
    \label{equation:forgetful-general}
    p\colon\modulispace[\Theta\framed]{Q,\vect{d},\vect{n}}\to\modulispace[\Theta\semistable]{Q,\vect{d}},
  \end{equation}
  defined by forgetting the framing data.
\end{theorem}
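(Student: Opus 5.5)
The statement is \cite[Proposition~3.6]{MR2511752}. A self-contained proof would proceed in three steps: first identify $\widehat{\Theta}$-stability concretely, then deduce smoothness, irreducibility and dimension, and finally construct $p$.

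\textbf{Step 1: stability.} A representation of $\widehat{Q}$ of dimension $\widehat{\vect{d}}$ is a pair $(V,f)$. Here $V\in\Rep(Q,\vect{d})$ and $f=(f_i\colon\mathbb{C}^{n_i}\to V_i)_i$. Subrepresentations come in two kinds:
\begin{itemize}[noitemsep]
  \item those $U\subset V$ not containing the framing vertex, with $\widehat{\Theta}(\dimvect U)=C\Theta(\vect{e})-\lambda(\vect{e})$ for $\vect{e}=\dimvect U$;
  \item those $U\supseteq\operatorname{im}f$ together with the vertex $i_0$, with value $C\Theta(\vect{e})-\lambda(\vect{e})+\lambda(\vect{d})$.
\end{itemize}
Since $\Theta$ is integral, $\Theta(\vect{e})$ is a multiple of $\gcd(\Theta)$. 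Combined with $0\le\lambda(\vect{e})\le\lambda(\vect{d})<C\gcd(\Theta)$ and $\lambda$ positive, this gives the following characterization. $(V,f)$ is $\widehat{\Theta}$-semistable if and only if
\begin{itemize}[noitemsep]
  \item $V$ is $\Theta$-semistable, and
  \item every proper subrepresentation $U\subsetneq V$ containing the image of $f$ has $\Theta(\dimvect U)<0$.
\end{itemize}
Both inequalities are strict in the relevant cases, so semistability equals stability. The description involves only the sign pattern of $\Theta$, not the choice of $\lambda$ or $C$. Hence all such $\widehat{\Theta}_{\lambda,C}$ lie in one chamber and define the same moduli space, which proves the independence claim.

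\textbf{Step 2: smoothness, irreducibility, dimension.} Stable points have stabilizer $\mathbb{C}^*$ (the diagonal scalars), which acts trivially. So the reduced group $\mathrm{G}_{\widehat{\vect{d}}}/\mathbb{C}^*\cong\mathrm{G}_{\vect{d}}$ (normalizing the framing vertex) acts freely on the stable locus. The stable locus is open in the affine space $\Rep(\widehat{Q},\widehat{\vect{d}})$, hence smooth and irreducible if nonempty. The geometric quotient by a free action with GIT stability is a principal bundle, so the quotient is smooth and irreducible. Its dimension is
\[
\dim\Rep(Q,\vect{d})+\vect{n}\cdot\vect{d}-\dim\mathrm{G}_{\vect{d}}=\vect{n}\cdot\vect{d}-\langle\vect{d},\vect{d}\rangle.
\]

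\textbf{Step 3: the map $p$.} The forgetful map $(V,f)\mapsto V$ sends semistable points to $\Theta$-semistable points by Step 1. Composing with the quotient gives a $\mathrm{G}_{\vect{d}}$-invariant morphism from the stable locus to $\modulispace[\Theta\semistable]{Q,\vect{d}}$. This descends to $p$ by the universal property of the categorical quotient. Projectivity is the main obstacle: $Q$ need not be acyclic, so we cannot simply say both sides are projective. I would argue via relative GIT:
\begin{itemize}[noitemsep]
  \item $\modulispace[\widehat{\Theta}\semistable]{\widehat{Q},\widehat{\vect{d}}}$ is $\operatorname{Proj}$ of the $\widehat{\Theta}$-semi-invariants.
  \item $\modulispace[\Theta\semistable]{Q,\vect{d}}$ is $\operatorname{Proj}$ of the $\Theta$-semi-invariants.
  \item Both are projective over the respective invariant-theoretic affine quotients.
\end{itemize}
Let $\mathbb{C}[\Rep(Q,\vect{d})]^{\mathrm{G}_{\vect{d}}}$ denote the ring of invariants, i.e. the coordinate ring of the affine quotient $\Rep(Q,\vect{d})/\!\!/\mathrm{G}_{\vect{d}}$. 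This ring equals the invariant ring for the framed quiver, because invariants of the framed quiver are generated by traces of oriented cycles and $i_0$ is a source. Hence both moduli spaces are projective over the same affine base. The map $p$ is then a morphism between varieties projective over a common base, which makes $p$ projective (proper and quasi-projective source).

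Alternatively, for properness one could verify the valuative criterion directly, using Langton-type limits of framings, but the GIT comparison above is cleaner.
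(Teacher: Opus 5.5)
Your proposal is correct and matches the paper, which gives no argument of its own beyond citing \cite[Proposition~3.6]{MR2511752}, exactly as you do. Your sketch is a faithful reconstruction of the standard argument behind that citation: the explicit description of $\widehat{\Theta}$-stability, the free action of $\mathrm{G}_{\vect{d}}$ on the stable locus, and relative projectivity over the common affine quotient $\Rep(Q,\vect{d})/\!\!/\mathrm{G}_{\vect{d}}$, which uses that $i_0$ is a source.

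One point to tighten: equality of the semistable loci does not by itself give the chamber claim, which needs the sign of $\widehat{\Theta}_{\lambda,C}$ on every subdimension vector of $\widehat{\vect{d}}$. Your arithmetic already shows that this sign is never zero and is determined by the sign of $\Theta$, so you only need to state it.
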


The fibres can be described as follows using so-called local quivers.
Let~$V=\bigoplus_{k=1}^s{U_k}^{\oplus m_k}$ be a~$\Theta$-polystable representation,
where we can assume that the~$U_k$ are chosen to be pairwise non-isomorphic.
Then we call the tuple of dimension vectors
\begin{equation}
  \xi=((\dimvect(U_k))^{m_k})_{k=1}^s
\end{equation}
the \emph{Luna type} of~$V$.
The set~$S_\xi\subset\modulispace[\Theta\semistable]{Q,\vect{d}}$ of representations
of fixed Luna type~$\xi$ is locally closed,
and the quotient map by the group ${\rm GL}(\vect{d})$
\begin{equation}
  \Rep^{\Theta\semistable}(Q,\vect{d})\rightarrow\modulispace[\Theta\semistable]{Q,\vect{d}}
\end{equation}
is \'etale-locally trivial over~$S_\xi$.
We call~$S_\xi$ the \emph{Luna stratum} in~$\modulispace[\Theta\semistable]{Q,\vect{d}}$
associated to the Luna type~$\xi$.

We then define the \emph{local quiver}~$Q_\xi$ for~$\xi$
as the quiver with vertices~$i_1,\ldots,i_s$,
and with the number of arrows from~$i_k$ to~$i_l$ given by
\begin{equation}
  \delta_{k,l}-\langle\dimvect(U_k),\dimvect(U_l)\rangle_Q.
\end{equation}
We also define the local dimension vector~$\vect{d}_\xi=\sum_{k=1}^s m_k\vect{i}_k$ for~$Q_\xi$, and
we consider the local framing datum $\vect{n}_\xi=\sum_{k=1}^s(\vect{n}\cdot\dimvect(U_k))\vect{i}_k$.
The following result is \cite[Theorem~4.1]{MR2511752}.

\begin{proposition}
  \label{proposition:fibres-framed}
  Consider the setup of \cref{theorem:framed},
  and let~$\xi$ be a Luna type of~$\modulispace[\Theta\semistable]{Q,\vect{d}}$.
  For any~$V\in S_\xi$, the fiber of the forgetful map~$p$ from \eqref{equation:forgetful-general} is
  \begin{equation}
    p^{-1}(V)\cong\modulispace[0\framed,\nilpotent]{Q_\xi,\vect{d}_\xi,\vect{n}_\xi}.
  \end{equation}
\end{proposition}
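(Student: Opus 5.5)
The proof would go through the Luna étale slice theorem at a polystable point. The key fact is that, near~$V$, the framed problem for~$Q$ is modelled on the framed problem for the local quiver~$Q_\xi$ with trivial stability.

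\textbf{Step 1: reformulate framed stability.} First I would make $\widehat{\Theta}$-stability explicit. Take a representation $\widehat{W}$ of~$\widehat{Q}$ of dimension~$\widehat{\vect{d}}$, consisting of a representation~$W$ of~$Q$ and framing maps $f=(f_i\colon\mathbb{C}^{n_i}\to W_i)$. Because $\lambda(\vect{d})<C\gcd(\Theta)$, the $C\Theta$ term dominates for subrepresentations with $\Theta(\dimvect U)\neq 0$. A direct check of the inequalities should then show: $\widehat{W}$ is $\widehat{\Theta}$-(semi)stable if and only if $W$ is $\Theta$-semistable and no proper subrepresentation $U\subsetneq W$ with $\Theta(\dimvect U)=0$ contains the image of~$f$. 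I take this characterization as the working definition of the framed locus. It shows that $p$ sends $(W,f)$ to the polystable class of~$W$, namely the associated graded of a Jordan--Hölder filtration.

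\textbf{Step 2: local structure at~$V$.} Let $V=\bigoplus_k U_k^{\oplus m_k}$ be polystable of type~$\xi$. Its stabilizer is $G_V=\prod_k\mathrm{GL}_{m_k}$ and its normal space is $N_V=\mathrm{Ext}^1_Q(V,V)$. Since the $U_k$ are stable of the same slope, $\Hom(U_k,U_l)=\delta_{k,l}\mathbb{C}$. Hence
\[
\dim\mathrm{Ext}^1(U_k,U_l)=\delta_{k,l}-\langle\dimvect U_k,\dimvect U_l\rangle,
\]
which identifies $N_V\cong\Rep(Q_\xi,\vect{d}_\xi)$ as $G_V$-representations. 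The framing directions $\bigoplus_i\Hom(\mathbb{C}^{n_i},V_i)$ decompose as $\bigoplus_k\Hom(\mathbb{C}^{\vect{n}\cdot\dimvect U_k},\mathbb{C}^{m_k})$, which is exactly the framing of~$Q_\xi$ by~$\vect{n}_\xi$. Luna's slice theorem, applied to the $G_{\vect{d}}$-action on $\Rep^{\Theta\semistable}(Q,\vect{d})\times\bigoplus_i\Hom(\mathbb{C}^{n_i},V_i)$, gives an étale $G_{\vect{d}}$-equivariant map
\[
G_{\vect{d}}\times^{G_V}\bigl(N_V\times\textstyle\bigoplus_k\Hom(\mathbb{C}^{\vect{n}\cdot\dimvect U_k},\mathbb{C}^{m_k})\bigr)\to\Rep^{\Theta\semistable}(Q,\vect{d})\times\textstyle\bigoplus_i\Hom(\mathbb{C}^{n_i},V_i).
\]
This map is compatible with the quotient maps and is strongly étale near the orbit of~$V$. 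Under it, the fiber of the quotient over~$V$ corresponds to $G_{\vect{d}}\times^{G_V}\bigl(\text{null-cone of }N_V\bigr)$. The invariants of $\Rep(Q_\xi,\vect{d}_\xi)$ under $G_V$ are generated by traces of oriented cycles, so this null-cone consists of the nilpotent representations of~$Q_\xi$.

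\textbf{Step 3: match stability on both sides (main obstacle).} I must show that, under the slice, the condition of Step 1 corresponds to $0$-framed stability on~$Q_\xi$: no proper subrepresentation contains the framing image. The approach is categorical. Representations in the fiber are $\Theta$-semistable of slope zero with all Jordan--Hölder factors among the~$U_k$. These form a length category whose Ext-quiver is~$Q_\xi$, so it is equivalent to nilpotent representations of~$Q_\xi$ with $U_k$ corresponding to the simple~$S_{i_k}$. Under this equivalence, slope-zero subrepresentations of~$W$ correspond to subrepresentations of the nilpotent $Q_\xi$-representation. The framing map $\mathbb{C}^{\vect{n}}\to W$ corresponds to a map from the projective-type object generated by $\vect{n}_\xi$ copies at vertex~$i_k$. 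This accounts for the multiplicity $\vect{n}\cdot\dimvect U_k=\dim\Hom(\mathbb{C}^{\vect{n}},U_k)$. The subtle point is to check that the slice equivalence is compatible with taking subobjects, and not merely with isomorphism classes. I would verify this by working in the formal neighbourhood, where the slice is a versal deformation of~$V$ realized by the $A_\infty$/Ext-algebra of~$V$. Alternatively, one can argue directly with iterated extensions of the~$U_k$.

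Once the stable loci match, passing to quotients by $G_{\vect{d}}$ on one side and $G_V$ on the other gives the claimed isomorphism $p^{-1}(V)\cong\modulispace[0\framed,\nilpotent]{Q_\xi,\vect{d}_\xi,\vect{n}_\xi}$.
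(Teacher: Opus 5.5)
Your Steps~1 and~2 are correct. They are the standard set-up behind this result: the paper gives no argument of its own and quotes \cite[Theorem~4.1]{MR2511752}, which rests on this Luna-slice/local-quiver picture. Three things hold as you state them. The description of $\widehat{\Theta}$-stability is right. As $G_V$-modules, $\mathrm{Ext}^1(V,V)\cong\Rep(Q_\xi,\vect{d}_\xi)$ and $F=\bigoplus_i\Hom(\mathbb{C}^{n_i},V_i)\cong F_\xi=\bigoplus_k\Hom(\mathbb{C}^{\vect{n}\cdot\dimvect U_k},\mathbb{C}^{m_k})$. Write $\pi\colon\Rep^{\Theta\semistable}(Q,\vect{d})\to\modulispace[\Theta\semistable]{Q,\vect{d}}$ for the quotient map. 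Then the strongly \'etale slice, taken over an affine open $M'$ of the base (since $\Rep^{\Theta\semistable}(Q,\vect{d})$ is not affine), identifies $\pi^{-1}(V)\times F$ with $G_{\vect{d}}\times^{G_V}(\mathrm{N}(Q_\xi,\vect{d}_\xi)\times F_\xi)$.

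The gap is Step~3, which is the only non-formal part of the proof and which you defer. As sketched, it does not deliver what you need.
\begin{itemize}
\item The equivalence between $\mathrm{filt}(U_1,\ldots,U_s)$ and nilpotent $Q_\xi$-representations only matches isomorphism classes. (It also uses that this wide subcategory of $\mathrm{mod}\,\mathbb{C}Q$ is hereditary, so that no relations occur.) It therefore gives at best a bijection on closed points, not a morphism of varieties. Nothing upgrades such a bijection here, since the fibres are in general singular and reducible (think of the type~I and type~III fibres for $k=2$).
\item The slice map is just some equivariant \'etale morphism provided by Luna's theorem. Nothing forces it to send $x$ to the object corresponding to $x$ under that equivalence. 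Nor does anything force it to carry subrepresentations of $x$ containing the framing onto slope-zero subrepresentations of its image containing the framing, or conversely.
\item A formal-neighbourhood $A_\infty$ argument does not settle this. The nilpotent cone is not an infinitesimal object, and compatibility with subobjects is exactly the statement to be proved.
\item A direct argument with iterated extensions would have to be carried out for the specific slice map, and you do not give one.
\end{itemize}
So ``once the stable loci match'' is an assumption, not a step.

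The missing idea is to bypass subobjects altogether: both stabilities are GIT stabilities for characters, and these are compatible with the slice.
\begin{itemize}
\item Over $M'$, the framed moduli space is the quotient of the $\chi_{\widehat{\Theta}}$-semistable locus of the affine variety $Y'=\pi^{-1}(M')\times F$ under $G_{\widehat{\vect{d}}}=G_{\vect{d}}\times\mathrm{GL}_1$. By your Step~1 this locus is stable, with free action.
\item Put $H=G_V\times\mathrm{GL}_1$ and let $S$ be the slice. Strong \'etaleness gives $G_{\widehat{\vect{d}}}\times^{H}S\cong Y'\times_{M'}(S/\!\!/H)$. Hence semi-invariants of weight $\chi^n$ on the left are $\mathcal{O}(S/\!\!/H)$-combinations of pullbacks of semi-invariants on $Y'$. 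So the semistable locus upstairs is the preimage of that of $Y'$, and it corresponds to the $\chi|_H$-semistable locus of $S$.
\item Compute the restriction along $(g,t)\mapsto((\bigoplus_k\mathrm{id}_{(U_k)_i}\otimes g_k)_i,t)$. It is $\prod_k\det(g_k)^{C\Theta(\dimvect U_k)-\lambda(\dimvect U_k)}\,t^{\lambda(\vect{d})}$.
\item Set $\lambda_\xi=\sum_k\lambda(\dimvect U_k)\vect{i}_k^*$. Since $\Theta(\dimvect U_k)=0$ and $\lambda(\vect{d})=\lambda_\xi(\vect{d}_\xi)$, the restricted character is exactly the framing stability $-\lambda_\xi+\lambda_\xi(\vect{d}_\xi)\vect{i}_0^*$ for $(Q_\xi,\vect{d}_\xi,\vect{n}_\xi)$. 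By King's criterion its stable points are those where the framing generates.
\item Restrict the Cartesian square to the fibres over $V$ and $0$ and divide by the free actions. This gives $p^{-1}(V)\cong\modulispace[0\framed,\nilpotent]{Q_\xi,\vect{d}_\xi,\vect{n}_\xi}$.
\end{itemize}
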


A representation of the latter moduli space consists of
a nilpotent representation $V$ of $Q_\xi$ of dimension vector $\vect{d}_\xi$,
together with linear maps $f_k\colon\mathbb{C}^{\vect{n}\cdot\vect{d}_k}\rightarrow V_{i_k}$ for all $k$.
Such a tuple is stable if and only if the images of the $f_k$ generate $V$ as a representation of $Q_\xi$.

\section{The construction}
\label{section:construction}
We now specialize the preceding framework to multiples of the null root of an extended Dynkin quiver.
First we identify the orientations for which the Euler form is symmetric on the defect-zero lattice
and describe their stable dimension vectors.
We then use framed moduli spaces and local quivers to analyze the forgetful morphism,
before applying the quiver DT/PT correspondence to their cohomology.

\subsection{Extended Dynkin quivers}
\label{subsection:extended-dynkin}
We recall and prove some basic results about extended Dynkin quivers
which are necessary for the construction in \cref{subsection:framed}.

Let $Q$ be an acyclic quiver of extended Dynkin (or Euclidean) type,
so the underlying unoriented graph is an extended Dynkin diagram of type
\begin{itemize}
  \item $\tildeA_n$ for~$n\geq 1$,
  \item $\tildeD_n$ for~$n\geq 4$, or
  \item $\tildeE_n$ for~$n=6,7,8$.
\end{itemize}
Because~$Q$ is of extended Dynkin type,
the quadratic form attached to the symmetrization~$(-,-)$ of the Euler form
is semi-positive definite,
with radical generated by an indivisible vector~$\delta\in\mathbb{N}Q_0$.
We thus have~$(\delta,-)=0$, and~$\langle\delta,\delta\rangle=0$.

Following Dlab and Ringel \cite{MR0447344}
(see also \cite[\S3.5]{MR0774589}),
we define the \emph{defect function} as~$\partial=\langle\delta,-\rangle$,
thus we have~$\{\delta,-\}=2\partial$.
Let~$\Ker(\partial)$ be the subgroup of $\mathbb{Z}Q_0$
of dimension vectors $\vect{d}$ such that $\partial(\vect{d})=0$,
and define
\begin{equation}
  \label{equation:P}
  P\colonequals\{\vect{d}\in\Ker(\partial)\cap\Delta_+\mid\vect{d}\lneqq\delta\},
\end{equation}
the set of positive roots in~$\Ker(\partial)$ which are smaller than~$\delta$.
For more on the representation theory of~$\mathbb{C}Q$,
a tame and hereditary algebra,
we refer to \cite[\S3.6]{MR0774589}.
For instance, the following lemma is a consequence of the trichotomy
for indecomposable representations of~$\mathbb{C}Q$.

\begin{lemma}
  \label{lemma:stables}
  Let~$Q$ be an extended Dynkin quiver.
  If a dimension vector~$\vect{e}\in\Ker(\partial)$ admits a $\partial$-stable representation,
  then~$\vect{e}=\delta$ or~$\vect{e}\in P$.
  In the former case, we have a~$\mathbb{P}^1$ worth of semistable representations, and a dense open subset of stable representations.
  In the latter case, there exists a unique (semi-)stable such representation up to isomorphism.
\end{lemma}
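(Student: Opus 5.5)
We prove the lemma using the structure theory of modules over the tame hereditary algebra~$\mathbb{C}Q$ from \cite[\S3.6]{MR0774589}. The indecomposables split into preprojective ($\partial<0$), regular ($\partial=0$) and preinjective ($\partial>0$) modules. A $\partial$-stable representation~$V$ with~$\dimvect(V)=\vect{e}\in\Ker(\partial)$ is indecomposable, since stable representations have only scalar endomorphisms. Since~$\partial(\vect{e})=0$, it is regular.

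Next we show that~$V$ is simple in the abelian category of regular modules, i.e.\ \emph{regular simple}. The~$\partial$-semistable representations of slope zero are exactly the regular modules. Indeed, a subrepresentation of a regular module has~$\partial\leq 0$: its preprojective summands have~$\partial<0$, and it has no nonzero preinjective summand. So~$\partial$-stability of a regular module means precisely that it has no proper nonzero regular submodule, which is regular simplicity.

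Now we use the Dlab--Ringel classification of regular simples. They lie in tubes indexed by~$\mathbb{P}^1$. In a homogeneous tube the regular simple has dimension vector~$\delta$. In one of the finitely many non-homogeneous tubes, of rank~$r\geq 2$, the regular simples~$E_1,\ldots,E_r$ have dimension vectors summing to~$\delta$. Each of these is a real root (they are exceptional, since~$\mathrm{Ext}^1(E_i,E_i)=0$ in a tube of rank~$\geq 2$), lies in~$\Ker(\partial)$, and is~$\lneqq\delta$. Hence~$\vect{e}\in P$ or~$\vect{e}=\delta$.

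For~$\vect{e}\in P$: it is a real root, so~$\langle\vect{e},\vect{e}\rangle=1$. Every representation of dimension~$\vect{e}$ in a dense open orbit is the unique rigid indecomposable of that dimension (Kac). Semistable representations of dimension~$\vect{e}$ are regular modules, each filtered by regular simples of total dimension vector~$\vect{e}$. Since~$\vect{e}<\delta$, such a filtration cannot contain a homogeneous simple, so only finitely many isoclasses of semistables occur. The stable one is the regular simple~$E_i$ with~$\dimvect E_i=\vect{e}$.

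The step I expect to be the main obstacle is uniqueness. We must show that two distinct regular simples in the non-homogeneous tubes never share a dimension vector~$\vect{e}<\delta$. Moreover, if ``unique semistable'' is meant up to S-equivalence, we must check that any semistable of dimension~$\vect{e}$ is S-equivalent to~$E_i$. The cleanest route is the moduli space. Since~$\vect{e}$ is a real Schur root, we have~$\dim\modulispace[\partial\stable]{Q,\vect{e}}=1-\langle\vect{e},\vect{e}\rangle=0$, so the stable locus is a single point. Its closure in~$\modulispace[\partial\semistable]{Q,\vect{e}}$, which is irreducible because~$\Rep(Q,\vect{e})$ is irreducible and the semistable locus is open, is the whole moduli space. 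So the moduli space is a single point.

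For~$\vect{e}=\delta$: we have~$\dim=1-\langle\delta,\delta\rangle=1$. The regular simples of dimension~$\delta$ are the homogeneous ones, parametrized by the homogeneous points of~$\mathbb{P}^1$. Each non-homogeneous tube contributes one polystable point~$\bigoplus E_i$. Hence~$\modulispace[\partial\semistable]{Q,\delta}\cong\mathbb{P}^1$, with the stable locus equal to the complement of finitely many points, which is dense and open. To identify the moduli space with~$\mathbb{P}^1$ rather than only matching points, one notes it is a normal projective curve with a dense open subset isomorphic to an open subset of~$\mathbb{P}^1$. Alternatively, one uses the standard description via the~$\Aone$-Kronecker reduction.
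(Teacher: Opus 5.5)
Your argument is correct and follows the same route as the paper's proof. The trichotomy forces a stable representation of defect zero to be regular, the tube structure bounds its dimension vector by~$\delta$, and the real/imaginary dichotomy then separates the two cases; you also supply details the paper leaves implicit, notably that $\partial$-stable means regular simple, which is what justifies $\vect{e}\leq\delta$. The uniqueness step you single out as the main obstacle is immediate from Kac's theorem (a real root carries exactly one indecomposable, and stable representations are indecomposable), which is what the paper tacitly uses, though your dimension count $1-\langle\vect{e},\vect{e}\rangle=0$ on the irreducible moduli space also works.
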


\begin{proof}
  A stable representation is in particular indecomposable.
  Thus by the trichotomy it is either preprojective or preinjective
  (which implies that the defect of its dimension vector is non-zero, which we exclude)
  or it is a regular indecomposable.
  In the latter case,
  its dimension vector is a positive root in the kernel of~$\partial$,
  and is at most~$\delta$.
  If it is equal to~$\delta$,
  then it is an imaginary root,
  we have a~$\mathbb{P}^1$ worth of semistable representations, and a dense open subset of stable representations.
  If it is strictly below~$\delta$,
  it is a real root,
  and a stable representation is unique up to isomorphism.
\end{proof}

We need the following fact first proven in \cite[Proposition~2.14]{MR5085658}.
We provide a proof in our current setup,
without reference to weighted projective lines.

\begin{lemma}
  \label{lemma:4-quivers}
  Let~$Q$ be an extended Dynkin quiver.
  If the restriction of $\{-,-\}$ to $\Ker(\partial)$ vanishes identically,
  then $Q$ is already of type $\Aone$, $\Atwo$, $\Athree$ or $\Dfour$.
\end{lemma}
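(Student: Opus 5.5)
We argue through the regular modules and their tubes. By the discussion around \cref{lemma:stables}, $\Ker(\partial)$ is the Grothendieck group of the abelian category of regular modules. This category splits into tubes indexed by $\mathbb{P}^1$. For a tube of rank $r\geq 2$ we write $E_1,\ldots,E_r$ for its quasi-simple modules; these are the $\partial$-stable representations with dimension vectors in $P$. They satisfy $\tau E_t=E_{t-1}$ (indices mod $r$) and $\sum_t\dimvect(E_t)=\delta$. The strategy is to evaluate $\{-,-\}$ on pairs of such quasi-simples and to show that a tube of rank $\geq 3$ forces a nonzero value. We then read off the quivers from the admissible tubular types.

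First we compute the Euler form on a tube. Since $\mathbb{C}Q$ is hereditary, $\langle\dimvect E_s,\dimvect E_t\rangle=\dim\Hom(E_s,E_t)-\dim\mathrm{Ext}^1(E_s,E_t)$. Auslander--Reiten duality gives $\mathrm{Ext}^1(E_s,E_t)\cong D\Hom(E_t,\tau E_s)=D\Hom(E_t,E_{s-1})$. Quasi-simples in a tube are simple objects of the regular category, so $\Hom(E_s,E_t)=\delta_{s,t}$. Hence
\begin{equation*}
  \langle\dimvect E_s,\dimvect E_t\rangle=\delta_{s,t}-\delta_{t,s-1},
\end{equation*}
and therefore $\{\dimvect E_s,\dimvect E_t\}=\delta_{t,s+1}-\delta_{t,s-1}$. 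If $r\geq 3$, then $s+1\neq s-1$ modulo $r$, so $\{\dimvect E_1,\dimvect E_2\}=\pm1\neq 0$, contradicting the hypothesis. Both $\dimvect E_t$ lie in $\Ker(\partial)$ by \cref{lemma:stables}. For $r=2$ the two Kronecker deltas coincide and the value is $0$, as it must be. So the hypothesis forces every tube to have rank at most $2$.

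Next we use the standard Dlab--Ringel classification of tubular types, recalled for instance from \cite[\S3.6]{MR0774589}. For $\tildeA_{p,q}$ with $p$ arrows in one direction and $q$ in the other (so $p+q=n+1$), the nonhomogeneous tubes have ranks $p$ and $q$. For $\tildeD_n$ the ranks are $(2,2,n-2)$, and for $\tildeE_{6,7,8}$ they are $(2,3,3)$, $(2,3,4)$ and $(2,3,5)$. Requiring all ranks $\leq 2$ excludes every $\tildeE_n$ and leaves $\tildeD_4$ only. In type $\tildeA$ it forces $p,q\leq 2$, that is $n+1\leq 4$. The remaining possibilities for $\tildeA$ are $n=1$; $n=2$ with $(p,q)=(2,1)$, which is the quiver $\Atwo$ of \cref{table:5-quivers}; and $n=3$ with $(p,q)=(2,2)$. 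The last case is either the alternating orientation $\Athree$ or the orientation with two paths of length $2$ between a source and a sink. For $\tildeD_4$ every orientation has type $(2,2,2)$, and all orientations are reflection-equivalent to the subspace orientation $\Dfour$. The lemma only asserts the type, so the $\tildeA_3$ case needs no further refinement.

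The main obstacle is the bookkeeping rather than the idea. We rely on two external facts: that quasi-simples in a tube are simple in the regular category, and the tubular-type table for each orientation. If we wanted to avoid citing the table, we would instead exhibit explicit real roots in $P$ with nonzero $\{-,-\}$ for each excluded diagram. For example, in $\tildeE_n$ we would use the arm of length $\geq 3$, or a long arm in $\tildeD_n$ with $n\geq 5$, and construct adjacent quasi-simples by hand. This is more laborious, so we would use the tube argument above.
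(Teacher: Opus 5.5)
Your proof is correct and follows the same route as the paper: you translate the vanishing of $\{-,-\}$ on $\Ker(\partial)$ into a condition on the tubes of the regular category, deduce that every tube has rank at most two, and read off the quivers from the tubular types in \cite[\S3.6]{MR0774589}. Your Auslander--Reiten duality computation $\{\dimvect E_s,\dimvect E_t\}=\delta_{t,s+1}-\delta_{t,s-1}$ is a welcome addition, since it supplies the justification the paper leaves implicit for why symmetry forces rank at most two.
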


\begin{proof}
  The condition on the antisymmetrization
  translates to the symmetry of the Euler form on the category of regular modules
  (whose Grothendieck group is~$\Ker(\partial)$ by the trichotomy),
  which implies that all tubes in the Auslander--Reiten quiver have width at most two.
  A direct inspection of the tubular types
  on \cite[page~158]{MR0774589} immediately yields the claim.
\end{proof}

\begin{lemma}
  \label{lemma:euler-form-stables}
  Let~$Q$ be an extended Dynkin quiver.
  If the restriction of $\{-,-\}$ to $\Ker(\partial)$ vanishes identically,
  then
  \begin{equation}
    P=\{\alpha_1,\ldots,\alpha_{t},\overline{\alpha}_1,\ldots,\overline{\alpha}_{t}\}
  \end{equation}
  for pairwise distinct roots~$\alpha_1,\ldots,\alpha_{t}$,
  where~$t=\#Q_0-2$,
  and~$\overline{\alpha}_i\colonequals\delta-\alpha_i$.
  For all $i,j=1,\ldots,t$ with~$i\neq j$, we have
  \begin{equation}
    \langle\alpha_i,\alpha_j\rangle=\langle\overline{\alpha}_i,\overline{\alpha}_j\rangle=\langle\alpha_i,\overline{\alpha}_j\rangle=0
  \end{equation}
  and
  \begin{equation}
    \begin{gathered}
      \langle\alpha_i,\alpha_i\rangle=\langle\overline{\alpha}_i,\overline{\alpha}_i\rangle=1, \\
      \langle\alpha_i,\overline{\alpha}_i\rangle=\langle\overline{\alpha}_i,\alpha_i\rangle=-1.
    \end{gathered}
  \end{equation}
  Moreover, if $\sum_{\alpha\in P}k_\alpha\alpha$ with~$k_\alpha\geq 0$ is a multiple of $\delta$,
  then $k_{\alpha_p}=k_{\overline{\alpha}_p}$ for all $p$.
\end{lemma}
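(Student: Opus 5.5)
We will combine the tube structure from the proof of \cref{lemma:4-quivers} with the theory of regular representations, rather than proceed by case-by-case computation.

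\textbf{Step 1: identify $P$ with the quasi-simples of the exceptional tubes.} Under the symmetry hypothesis, every tube has rank at most two. The number of tubes of rank two is $t=\#Q_0-2$, because $\sum(r_i-1)=\#Q_0-2$. An exceptional tube of rank two has two quasi-simples $E,E'$ with $\dimvect E+\dimvect E'=\delta$. Its indecomposables of quasi-length one have dimension vectors $\dimvect E$ and $\dimvect E'$, which are real roots below $\delta$. Those of quasi-length two have dimension vector $\delta$, and longer ones exceed $\delta$. Homogeneous tubes only contribute multiples of $\delta$. By \cref{lemma:stables} and the trichotomy, every root in $\Ker(\partial)\cap\Delta_+$ strictly below $\delta$ is the dimension vector of a regular indecomposable of quasi-length one in an exceptional tube. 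Hence $P=\{\alpha_p,\overline{\alpha}_p\}$ with $\alpha_p=\dimvect E_p$ and $\overline\alpha_p=\delta-\alpha_p$. For distinctness, note that different tubes are Hom-orthogonal, and that $\alpha_p=\alpha_q$ or $\alpha_p=\overline\alpha_q$ would give two non-isomorphic stable representations of the same real root dimension. This contradicts the uniqueness in \cref{lemma:stables}.

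\textbf{Step 2: compute the Euler form.} Since $\langle\vect d,\vect e\rangle=\dim\Hom-\dim\mathrm{Ext}^1$, we compute Hom and Ext between quasi-simples. For different tubes, both Hom and Ext vanish, so all mixed pairings with $p\ne q$ are $0$. Within a rank two tube, $\tau E=E'$, so $\mathrm{End}(E)=\mathbb{C}$ and $\mathrm{Ext}^1(E,E)=D\Hom(E,\tau E)=0$, which gives $\langle\alpha_p,\alpha_p\rangle=1$. Moreover $\Hom(E,E')=0$ and $\mathrm{Ext}^1(E,E')=D\Hom(E',\tau E)=D\Hom(E',E')$ is one-dimensional, so $\langle\alpha_p,\overline\alpha_p\rangle=-1$, and symmetrically in the other order. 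As a check, $\langle\alpha,\delta\rangle=-\partial(\alpha)=0$ gives $\langle\alpha_p,\overline\alpha_p\rangle=-\langle\alpha_p,\alpha_p\rangle$, which is consistent.

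\textbf{Step 3: the final claim.} Suppose $\vect v=\sum k_\alpha\alpha=m\delta$. Pair with $\alpha_p$ using $\langle\alpha_p,\delta\rangle=0$. By Step 2,
\begin{equation}
  0=\langle\alpha_p,\vect v\rangle=k_{\alpha_p}-k_{\overline\alpha_p},
\end{equation}
and the nonnegativity of the $k_\alpha$ is not even needed.

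\textbf{Main obstacle.} The hard step is Step 1: showing rigorously that $P$ contains exactly the $2t$ quasi-simple dimension vectors, with no further roots. I would argue via quasi-length. The dimension vector of a regular indecomposable of quasi-length $\ell$ in a rank $r$ tube is a sum of $\ell$ consecutive quasi-simple vectors, and it equals $\delta$ when $\ell=r$. Bounding $\ell<r\le2$ then gives $\ell=1$.
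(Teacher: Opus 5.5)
Your proof is correct, but it takes a different route from the paper. The paper's proof is simply ``direct inspection'': for each of the four quivers $\Aone$, $\Atwo$, $\Athree$, $\Dfour$ left by \cref{lemma:4-quivers}, one writes down the regular indecomposables below $\delta$ and reads off $P$ and all Euler pairings case by case.

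You instead argue uniformly with Auslander--Reiten theory:
\begin{itemize}
\item $P$ is the set of dimension vectors of quasi-simples in the rank-two tubes;
\item $t=\#Q_0-2$ follows from $\sum_i(r_i-1)=\#Q_0-2$;
\item the pairings follow from Hom-orthogonality of distinct tubes, $\mathrm{Ext}^1(M,N)\cong D\Hom(N,\tau M)$ and $\tau E=E'$;
\item the final claim follows by pairing with $\alpha_p$, which also shows that nonnegativity of the $k_\alpha$ is superfluous.
\end{itemize}

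Your version explains where the numbers come from. The entries $-1$ are exactly the AR translation inside a rank-two tube, and the symmetry hypothesis enters only through ``all tubes have rank at most two''; you never need the explicit roots. The paper's version is shorter given the small list, and it produces the explicit roots, which the paper needs later anyway (e.g.\ to check $\vect{n}\cdot\alpha=1$ in the proof of \cref{theorem:betti}).

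Two small points to tighten:
\begin{itemize}
\item For distinctness, invoke Kac's theorem directly: a real root is the dimension vector of a unique indecomposable up to isomorphism. Going through the uniqueness in \cref{lemma:stables} instead requires you to note that quasi-simples are $\partial$-stable. They are, being the simple objects of the abelian category of regular modules, but you should say so.
\item The identity $\sum_i(r_i-1)=\#Q_0-2$ should be cited or checked against the table of tubular types.
\end{itemize}
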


\begin{proof}
  This follows from a direct inspection,
  using the description of regular indecomposable representations
  used in the proof of \cref{lemma:stables}.
\end{proof}

From now on, we will assume that $\{-,-\}$ vanishes identically on  $\Ker(\partial)$,
so that~$Q$ is one of the quivers in \cref{lemma:4-quivers}.
In \cref{table:extended-dynkin}
we describe the relevant properties of these 4 Dynkin types.

\begin{table}
  \centering
  \begin{tabular}{cc}
    \toprule
    extended Dynkin type & unique indivisible null root $\delta$  \\
    \midrule
    $\Aone$   & $(1,1)$ \\
    $\Atwo$   & $(1,1,1)$ \\
    $\Athree$ & $(1,1,1,1)$ \\
    $\Dfour$  & $(1,1,1,1;2)$ \\
    \bottomrule
  \end{tabular}
  \caption{Overview of the extended Dynkin quivers from \cref{lemma:4-quivers}.}
  \label{table:extended-dynkin}
\end{table}

\subsection{Framing quiver moduli of extended Dynkin quivers}
\label{subsection:framed}
Moduli spaces for extended Dynkin quivers are not very interesting,
as extended Dynkin quivers are tame.
Thus we will apply the framing construction of \cref{subsection:framed-quiver-moduli}
to construct interesting moduli spaces out of extended Dynkin quivers
and their imaginary roots~$k\delta$.

Our main objects of interest are the framed moduli spaces
\begin{equation}
  X_{\vect{d}}^{\vect{n}}(Q)\colonequals\modulispace[\partial\framed]{Q,\vect{d},\vect{n}}
\end{equation}
for $\vect{d}\in\Ker(\partial)$, and in particular we will consider
\begin{equation}
  X_k^{\vect{n}}(Q)\colonequals\modulispace[\partial\framed]{Q,k\delta,\vect{n}}
\end{equation}
for $k\geq 1$ in what follows.
By \cref{theorem:framed},
we have that~$X_k^{\vect{n}}(Q)$ is
an irreducible smooth projective variety of dimension~$\vect{n}\cdot k\delta-\langle k\delta,k\delta\rangle=(\vect{n}\cdot\delta)k$.

\subsection{The fibers of the forgetful morphism}
\label{subsection:fibers}
The natural map \eqref{equation:forgetful-general}
from the framed to the unframed moduli space specializes to a map
\begin{equation}
  \label{equation:forgetful}
  p_k\colon X_k^{\vect{n}}(Q)\to \modulispace[\partial\semistable]{Q,k\delta}\cong\Sym^k\mathbb{P}^1\cong\mathbb{P}^k.
\end{equation}
The identification with~$\Sym^k\mathbb{P}^1\cong\mathbb{P}^k$
uses that every~$\partial$-polystable
is in fact the direct sum of~$k$~stable representations,
which are parameterized by~$\mathbb{P}^1$,
see also \cite[Corollary~7.3]{MR1900290}.

By \cref{proposition:fibres-framed}
we need to first determine the Luna stratification of the base.
We use the notation of \cref{lemma:euler-form-stables}.
\begin{lemma}
  \label{lemma:luna-types}
  The Luna types of~$\modulispace[\partial\semistable]{Q,k\delta}$
  are of the form
  \begin{equation}
    \xi=(\delta^{k_1},\ldots,\delta^{k_s},\alpha_1^{\ell_1},\ldots,\alpha_t^{\ell_t},\overline{\alpha}_1^{\ell_1},\ldots,\overline{\alpha}_{t}^{\ell_t}),
  \end{equation}
  where~$k_1+\ldots+k_s+\ell_1+\ldots+\ell_t=k$.
  If~$\xi$ is one of these Luna types,
  then the dimension of the corresponding Luna stratum~$S_\xi$ is equal to~$s$.
\end{lemma}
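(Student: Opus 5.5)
A $\partial$-polystable representation of dimension vector~$k\delta$ is a direct sum of $\partial$-stable representations $U$ with $\dimvect(U)\in\Ker(\partial)$, since $\partial(\dimvect U)\le 0$ for subrepresentations and the dimension vectors add up to $k\delta$ with total defect zero. The plan is to read off the Luna types from \cref{lemma:stables} and \cref{lemma:euler-form-stables}, and then compute the dimension of each stratum by counting continuous parameters.

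\textbf{Step 1: the possible stable summands.} By \cref{lemma:stables}, every stable summand has dimension vector either~$\delta$ or some element of~$P$. Write the polystable representation as
\begin{equation*}
  V=\bigoplus_{r=1}^s U_r^{\oplus k_r}\oplus\bigoplus_{\beta\in P}S_\beta^{\oplus m_\beta},
\end{equation*}
where the $U_r$ are pairwise non-isomorphic stables of dimension~$\delta$, and $S_\beta$ is the unique stable of dimension~$\beta$. Uniqueness of $S_\beta$ is what lets us group the summands with dimension vector in~$P$ by~$\beta$.

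\textbf{Step 2: the exponents.} Subtracting $\bigl(\sum_r k_r\bigr)\delta$ from $k\delta$ shows that $\sum_\beta m_\beta\beta$ is a multiple of~$\delta$. The final assertion of \cref{lemma:euler-form-stables} then gives $m_{\alpha_p}=m_{\overline{\alpha}_p}=:\ell_p$. Comparing coefficients of~$\delta$, using $\alpha_p+\overline{\alpha}_p=\delta$, yields $\sum_r k_r+\sum_p\ell_p=k$. This is exactly the claimed shape of~$\xi$.

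\textbf{Step 3: every such type occurs.} Conversely, each such tuple is realized: choose $s$ distinct stable points of~$\mathbb{P}^1$ (possible because the stable locus is dense open in that~$\mathbb{P}^1$) and add the $S_{\alpha_p}$ and $S_{\overline{\alpha}_p}$ with the prescribed multiplicities.

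\textbf{Step 4: dimension of the stratum.} For a fixed~$\xi$, the only continuous parameters are the isomorphism classes of the $s$ pairwise distinct stables $U_1,\ldots,U_s$. These range over an open subset of the stable locus in~$\mathbb{P}^1$, modulo permuting equal~$k_r$. So $S_\xi$ is a finite quotient of an open subset of $(\mathbb{P}^1)^s$ minus diagonals, and hence has dimension~$s$.

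\textbf{Main obstacle.} The main subtlety is in Step 4. The stable locus is only open in the $\mathbb{P}^1$ of semistable $\delta$-representations: the strictly semistable points of that $\mathbb{P}^1$ are exactly the polystables $S_{\alpha_p}\oplus S_{\overline{\alpha}_p}$. One must therefore check that these points are accounted for by the $\ell_p$ rather than by the $U_r$, so that types are not double-counted. This is consistent with the identification $\modulispace[\partial\semistable]{Q,k\delta}\cong\Sym^k\mathbb{P}^1$, under which $S_\xi$ is a locally closed stratum of the diagonal-type stratification, of dimension~$s$.
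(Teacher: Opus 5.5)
Your proposal is correct and follows the paper's proof. In both, \cref{lemma:stables} restricts the stable summands to dimension vectors in $P\cup\{\delta\}$, with a unique stable for each $\alpha\in P$; the last assertion of \cref{lemma:euler-form-stables} forces equal multiplicities for $\alpha_p$ and $\overline{\alpha}_p$; and each distinct $\delta$-summand contributes one parameter to $\dim S_\xi$. Your Step~3 (every such type is realized) and your observation that the strictly semistable points of the $\mathbb{P}^1$ are exactly the $S_{\alpha_p}\oplus S_{\overline{\alpha}_p}$ add detail beyond what the paper writes, without changing the argument.
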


\begin{proof}
  By \cref{lemma:stables}, the dimension vectors~$\alpha\in P$ and~$\delta$
  are the only ones admitting~$\partial$-stable representations.
  For every~$\alpha\in P$, there is a unique stable representation of that dimension vector.
  Hence,
  for~$\alpha\in P$
  at most one~$\alpha^{\ell_\alpha}$ can appear in the Luna type.
  We formally set~$\ell_\alpha=0$ in case~$\alpha$ does not appear.
  That~$\alpha_p$ and~$\overline{\alpha}_p$ appear with the same multiplicity
  follows from the last statement in \cref{lemma:euler-form-stables}.
  For~$\delta$ there is a one-dimensional family of stable representations,
  so every~$k_p$ contributes one to the dimension of the Luna stratum.
\end{proof}

The next step in applying \cref{proposition:fibres-framed}
is determining the local quiver settings.
We will consider the Jordan quiver
\begin{equation}
  \mathrm{J}\colon
  \begin{tikzpicture}[quiver, baseline=-0.5ex]
    \node[vertex] (i) at (0,0) {};
    \draw[->] (i) to[loop right] (i);
  \end{tikzpicture},
\end{equation}
and the oriented two-cycle quiver
\begin{equation}
  \mathrm{C}_2\colon
  \begin{tikzpicture}[quiver, baseline=-0.5ex]
    \node[vertex] (i) at (0,0) {};
    \node[vertex] (j) at (1,0) {};
    \draw[->] (i) to[bend left=16] (j);
    \draw[->] (j) to[bend left=16] (i);
  \end{tikzpicture}.
\end{equation}
With this notation, we have the following.
\begin{lemma}
  \label{lemma:local-quivers}
  Let~$\xi$ be a Luna type
  as determined in \cref{lemma:luna-types}.
  Then the local quiver~$Q_\xi$ is
  the disjoint union of
  \begin{itemize}[noitemsep]
    \item $s$ copies of the Jordan quiver~$\mathrm{J}$,
    \item $t$ copies of the oriented two-cycle quiver~$\mathrm{C}_2$.
  \end{itemize}
  The local dimension vector~$\vect{d}_\xi$ is
  \begin{equation}
    \vect{d}_\xi=\sum_{p=1}^sk_p\vect{i}_p+\sum_{p=1}^{t}\ell_p(\vect{j}_p+\vect{j}_p'),
  \end{equation}
  with the convention that~$\ell_\alpha=0$ if~$\alpha\in P$ does not appear in the Luna type.
  The local framing datum~$\vect{n}_\xi$ is
  \begin{equation}
    \vect{n}_\xi=(\vect{n}\cdot\delta)\sum_{p=1}^s\vect{i}_p+\sum_{p=1}^t(\vect{n}\cdot\alpha_p)\vect{j}_p+\sum_{p=1}^t(\vect{n}\cdot\overline{\alpha}_p)\vect{j}_p'.
  \end{equation}
\end{lemma}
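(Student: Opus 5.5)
The local quiver is computed directly from its definition. Its vertices are the pairwise non-isomorphic stable summands of the polystable representation of type~$\xi$, and the number of arrows between two of them is~$\delta_{k,l}-\langle\dimvect(U_k),\dimvect(U_l)\rangle$. So the proof amounts to evaluating the Euler form on all pairs of dimension vectors occurring in~$\xi$. By \cref{lemma:luna-types}, these are~$\delta$ (appearing~$s$ times, for~$s$ pairwise non-isomorphic stable representations) together with~$\alpha_p$ and~$\overline{\alpha}_p$ for those~$p$ with~$\ell_p>0$. The local dimension vector and local framing then follow at once from the definitions~$\vect{d}_\xi=\sum m_k\vect{i}_k$ and~$\vect{n}_\xi=\sum(\vect{n}\cdot\dimvect(U_k))\vect{i}_k$.

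\textbf{Step 1: pairs involving~$\delta$.} Since~$(\delta,-)=0$, we have~$\langle\delta,\vect{e}\rangle=\partial(\vect{e})=-\langle\vect{e},\delta\rangle$ for every~$\vect{e}$. All dimension vectors appearing in~$\xi$ lie in~$\Ker(\partial)$, so~$\langle\delta,\vect{e}\rangle=\langle\vect{e},\delta\rangle=0$ for each of them. Hence:
\begin{itemize}
  \item a vertex~$i_p$ of dimension~$\delta$ has~$1-0=1$ loop;
  \item there are no arrows between two distinct~$\delta$-vertices;
  \item there are no arrows between a~$\delta$-vertex and any~$\alpha$- or~$\overline{\alpha}$-vertex, in either direction.
\end{itemize}
This gives the~$s$ copies of~$\mathrm{J}$.

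\textbf{Step 2: the vertices from~$P$.} We apply \cref{lemma:euler-form-stables}.
\begin{itemize}
  \item Since~$\langle\alpha_p,\alpha_p\rangle=\langle\overline{\alpha}_p,\overline{\alpha}_p\rangle=1$, there are no loops at these vertices.
  \item Since~$\langle\alpha_p,\overline{\alpha}_p\rangle=\langle\overline{\alpha}_p,\alpha_p\rangle=-1$, there is exactly one arrow~$j_p\to j_p'$ and one arrow~$j_p'\to j_p$.
  \item Since the pairings vanish for~$p\neq q$, there are no arrows between blocks with different indices.
\end{itemize}
For~$\langle\overline{\alpha}_i,\alpha_j\rangle$ with~$i\neq j$, which the lemma does not list explicitly, I would deduce it from~$\langle\delta,\alpha_j\rangle=0$: this gives~$\langle\overline{\alpha}_i,\alpha_j\rangle=-\langle\alpha_i,\alpha_j\rangle=0$. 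Alternatively, it follows from the vanishing of~$\{-,-\}$ on~$\Ker(\partial)$ together with~$\langle\alpha_j,\overline{\alpha}_i\rangle=0$. This yields a copy of~$\mathrm{C}_2$ for each~$p$ with~$\ell_p>0$.

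For the statement as written, with~$t$ copies of~$\mathrm{C}_2$ in all cases, we adopt the stated convention that~$\ell_p=0$ is allowed. A block with dimension vector zero at both vertices contributes only a trivial factor to the moduli space, so including it is harmless.

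\textbf{Step 3: dimension vector and framing.} The multiplicities are~$k_p$ for the~$\delta$-summands and~$\ell_p$ for both~$\alpha_p$ and~$\overline{\alpha}_p$, as in \cref{lemma:luna-types}; this gives~$\vect{d}_\xi$. The framings are~$\vect{n}\cdot\delta$, $\vect{n}\cdot\alpha_p$ and~$\vect{n}\cdot\overline{\alpha}_p$ respectively; this gives~$\vect{n}_\xi$.

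The only mildly delicate point is the bookkeeping of the Euler form pairings not listed in \cref{lemma:euler-form-stables}, namely the~$\delta$-pairings and~$\langle\overline{\alpha}_i,\alpha_j\rangle$. Both are handled by the identity~$(\delta,-)=0$ combined with membership in~$\Ker(\partial)$.
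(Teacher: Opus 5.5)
Your proposal is correct and takes the same approach as the paper: you evaluate the Euler form on the dimension vectors occurring in~$\xi$ using \cref{lemma:euler-form-stables}, and then read off~$\vect{d}_\xi$ and~$\vect{n}_\xi$ from their definitions. The paper compresses the remaining cases into ``other pairings are equal to~0''. You spell these out: the~$\delta$-pairings via~$(\delta,-)=0$ together with membership in~$\Ker(\partial)$, and the unlisted pairing~$\langle\overline{\alpha}_i,\alpha_j\rangle$.
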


\begin{proof}
  We use \cref{lemma:euler-form-stables} to determine the local quiver~$Q_\xi$.
  It has vertices~$i_1,\ldots,i_s$
  and~$j_1, j_1',\ldots,j_t,j_t'$.
  To every vertex~$i_p$, there is a single loop attached,
  as~$\langle\delta,\delta\rangle=0$,
  whereas there is a single arrow in either direction between each pair of vertices $j_p$, $j_p'$,
  as~$\langle\alpha_j,\alpha_j\rangle=1$
  and~$\langle\alpha_j,\overline{\alpha}_j\rangle=-1$
  (and other pairings are equal to~0).
  The local dimension vector and local framing datum follow.
\end{proof}

We have the following description of the fibers of the forgetful morphism.
\begin{proposition}
  \label{proposition:fibers-forgetful}
  The fiber of the map~$p_k$ from \eqref{equation:forgetful}
  in a point~$V$ of the Luna stratum~$S_\xi$ indexed by
  a Luna type~$\xi$ from \cref{lemma:luna-types}
  is given by
  \begin{equation}
    \label{equation:fiber}
    p_ k^{-1}(V)\cong
    \prod_{p=1}^s\modulispace[0\framed,\nilpotent]{\mathrm{J},k_p,\vect{n}\cdot\delta}
    \times
    \prod_{p=1}^{t}\modulispace[0\framed,\nilpotent]{\mathrm{C}_2,(\ell_p,\ell_p),(\vect{n}\cdot\alpha_{p},\vect{n}\cdot\overline{\alpha}_{p})}.
  \end{equation}
\end{proposition}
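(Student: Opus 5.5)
The plan is to combine \cref{proposition:fibres-framed} with \cref{lemma:local-quivers}, and then observe that framed nilpotent moduli of a disjoint union of quivers split as a product. First, \cref{proposition:fibres-framed} gives
\[
p_k^{-1}(V)\cong\modulispace[0\framed,\nilpotent]{Q_\xi,\vect{d}_\xi,\vect{n}_\xi}.
\]
Here $Q_\xi$, $\vect{d}_\xi$ and $\vect{n}_\xi$ are given explicitly by \cref{lemma:local-quivers}. Concretely, $Q_\xi$ consists of $s$ Jordan quivers with dimensions $k_p$ and framings $\vect{n}\cdot\delta$. It also consists of $t$ oriented two-cycles with dimension vectors $(\ell_p,\ell_p)$ and framings $(\vect{n}\cdot\alpha_p,\vect{n}\cdot\overline{\alpha}_p)$.

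The key step is a general splitting statement. Let $Q'=Q^{(1)}\sqcup\cdots\sqcup Q^{(r)}$ be a disjoint union of quivers, with dimension vector $\vect{d}=\sum_a\vect{d}^{(a)}$ and framing $\vect{n}=\sum_a\vect{n}^{(a)}$. Then
\[
\modulispace[0\framed,\nilpotent]{Q',\vect{d},\vect{n}}\cong\prod_a\modulispace[0\framed,\nilpotent]{Q^{(a)},\vect{d}^{(a)},\vect{n}^{(a)}}.
\]
I would check this at three levels.

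\begin{enumerate}
\item \emph{Data.} The framed nilpotent representation space is the product of the corresponding spaces for each component. A representation of a disjoint union is nilpotent if and only if each component is nilpotent. The framing maps $f_k$ also split according to the vertices of each component.
\item \emph{Stability.} I would use the description after \cref{proposition:fibres-framed}: a tuple is stable if and only if the images of the $f_k$ generate $V$. A subrepresentation of $V$ is the direct sum of its components' subrepresentations, since no arrows connect different components. Hence the subrepresentation generated by all the images is the direct sum of the subrepresentations generated componentwise. So the tuple generates $V$ if and only if each component generates its own part.
\item \emph{Group.} The structure group $\prod_i\mathrm{GL}_{d_i}$ is the product of the groups of the components, and it acts factorwise.
\end{enumerate}

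Since semistable equals stable here and the action on the stable locus is free, the quotient is a geometric quotient. A geometric quotient of a product of free actions by a product group is the product of the quotients. Alternatively, one can argue via GIT with a product character, or via the functor of points of the fine moduli spaces.

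Applying the splitting to $Q_\xi$ gives \eqref{equation:fiber} directly. Components with $\ell_p=0$ or $k_p=0$ contribute a point, which is consistent with the convention $\ell_\alpha=0$.

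The main obstacle is making the product decomposition of quotients rigorous rather than just set-theoretic. I would handle it by noting that each factor is a fine moduli space, a free geometric quotient of a smooth variety. The product of the principal bundles is then a principal bundle for the product group, and the isomorphism follows from the universal property.
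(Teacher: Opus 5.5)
Your proposal is correct and follows the paper's proof: apply \cref{proposition:fibres-framed} with the local quiver data from \cref{lemma:local-quivers}, then split the framed nilpotent moduli space over the connected components of $Q_\xi$. Your componentwise check of the representation data, the generation condition, the group and the free quotient fills in details that the paper leaves implicit.
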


\begin{proof}
  We apply \cref{proposition:fibres-framed}.
  Using the decomposition into connected components of the quiver $Q_\xi$,
  the fibre $p_k^{-1}(V)$ decomposes into the product of
  the corresponding framed nilpotent moduli spaces of the loop (resp.~two-cycle) quiver.
\end{proof}

The factors in \eqref{equation:fiber} are parametrizing the following linear algebra data:
\begin{itemize}
  \item A Jordan factor parametrizes
    a nilpotent endomorphism~$A$ of a vector space~$V$ of dimension~$k_p$,
    together with~$\vect{n}\cdot\delta$ framing vectors that generate~$V$ as a~$\mathbb{C}[A]$-module.
  \item A two-cycle factor parametrizes vector spaces~$V$ and~$W$ of dimension~$\ell_p$,
    morphisms~$A\colon V\to W$ and~$B\colon W\to V$ such that~$A\circ B$ and~$B\circ A$ are nilpotent,
    together with framing maps~$\mathbb{C}^{\vect{n}\cdot\alpha_p}\longrightarrow V$
    and~$\mathbb{C}^{\vect{n}\cdot\overline{\alpha}_p}\longrightarrow W$
    whose images generate the representation.
\end{itemize}
This is a generalization of the descriptions considered in \cite[\S5.2]{2607.12895} for~$k=2$.
We will revisit these in \cref{subsection:fourfolds}.

For now, the following result suffices.
\begin{corollary}
  \label{corollary:flat-forgetful}
  The map~$p_k$ from \eqref{equation:forgetful}
  is equidimensional
  of dimension~$(\vect{n}\cdot\delta-1)k$,
  with general fibre~$(\mathbb{P}^{\vect{n}\cdot\delta-1})^k$.
  In particular, it is flat.
\end{corollary}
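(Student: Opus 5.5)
The plan is to compute the dimension of each factor in \cref{proposition:fibers-forgetful}, show that every fiber has dimension exactly $(\vect{n}\cdot\delta-1)k$, and then deduce flatness from miracle flatness. Throughout write $m\colonequals\vect{n}\cdot\delta$.

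\emph{Jordan factors.} The space $\modulispace[0\framed,\nilpotent]{\mathrm{J},d,m}$ parametrizes nilpotent $A$ on $\mathbb{C}^d$ together with $m$ vectors generating $\mathbb{C}^d$ as a $\mathbb{C}[A]$-module, up to $\mathrm{GL}_d$. I would use the standard fact that this is the punctual Quot scheme of length-$d$ quotients of $\mathcal{O}^m$ on $\mathbb{A}^1$ supported at the origin. Its dimension is $(m-1)d$. One way to see this is to stratify by Jordan type, or equivalently by Hilbert-scheme-type cells. Another is to note that the full framed space (without nilpotency) is smooth of dimension $md$, fibred over $\Sym^d\mathbb{A}^1$, and that the punctual part is its fibre over the origin. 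For $d=1$ the factor is $\mathbb{P}^{m-1}$. The dimension count needs some care, but it is classical (Ellingsrud--Lehn type results for curves).

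\emph{Two-cycle factors.} Write $a=\vect{n}\cdot\alpha_p$ and $b=\vect{n}\cdot\overline{\alpha}_p$, so $a+b=m$. The factor parametrizes $V$ and $W$ of dimension $\ell$ with $A\colon V\to W$ and $B\colon W\to V$ nilpotent in composition, together with generating framings. I would show it has dimension $(m-1)\ell$, i.e.\ $(a+b-1)\ell$. The unrestricted framed moduli space has dimension $\vect{n}_\xi\cdot\vect{d}_\xi-\langle\vect{d}_\xi,\vect{d}_\xi\rangle=(a+b)\ell-(2\ell^2-2\ell^2)=(a+b)\ell$. It maps to the unframed semisimple moduli space of $\mathrm{C}_2$ in dimension $(\ell,\ell)$, which is $\Sym^\ell\mathbb{A}^1$ via the eigenvalues of $AB$. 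The nilpotent locus is the fibre over $0$, so its dimension is at least $(a+b)\ell-\ell$. For the upper bound I would stratify by the nilpotent isomorphism type, i.e.\ by strings of the $\mathbb{Z}/2$-graded nilpotent representation, and count dimensions of generated framings minus automorphisms. An alternative is to use a $\mathbb{C}^*$-action scaling $A$ and $B$ that contracts everything to the fibre over $0$.

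\emph{Assembling.} By \cref{lemma:luna-types}, $\sum k_p+\sum\ell_p=k$, and the stratum $S_\xi$ has dimension $s$. The fibre therefore has dimension $(m-1)k$ independently of $\xi$. For the open stratum ($s=k$, all $k_p=1$, all $\ell_p=0$) the fibre is $(\mathbb{P}^{m-1})^k$. Since $X_k^{\vect{n}}(Q)$ is smooth by \cref{theorem:framed}, and $\mathbb{P}^k$ is smooth, and all fibres have dimension $\dim X-\dim\mathbb{P}^k=mk-k$, miracle flatness gives that $p_k$ is flat. Note also that $\dim X=mk$, consistent with the count on the open stratum.

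The main obstacle is the upper bound on the dimension of the nilpotent framed spaces, particularly for $\mathrm{C}_2$. The first thing I would try there is the general principle that framed nilpotent moduli spaces are fibres of a flat family over the affine quotient. For the Jordan quiver and for $\mathrm{C}_2$ with a symmetric Euler form, I expect this to follow from an explicit cell decomposition via torus fixed points.
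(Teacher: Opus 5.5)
\textbf{Verdict: genuine gap.} Your architecture matches the paper's: you compute each factor in \cref{proposition:fibers-forgetful}, add them up using $\sum k_p+\sum\ell_p=k$, read off $(\mathbb{P}^{m-1})^k$ on the open stratum, and conclude by miracle flatness. Your lower bound is also sound. The $\mathrm{C}_2$ factor is the fibre over the smooth point $0\in\mathbb{A}^\ell$ of a morphism from a smooth irreducible variety of dimension $(a+b)\ell$, so every component has dimension at least $(a+b-1)\ell$; the same holds for $\mathrm{J}$.

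The gap is the upper bound. That is the real content of the corollary, and you leave it open. Two of your suggested routes do not supply it:
\begin{itemize}
  \item The $\mathbb{C}^*$-contraction idea fails. A proper equivariant map to a base contracted to $0$ can have a central fibre larger than the general one, as $\Bl_0\mathbb{A}^2\to\mathbb{A}^2$ shows.
  \item Appealing to framed nilpotent moduli being fibres of a flat family over the affine quotient is circular: that flatness is essentially what you are trying to prove.
\end{itemize}

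The missing idea is the one the paper uses. The framed nilpotent moduli space is the quotient of an open subset of $\mathrm{N}(Q,\vect{d})\times\prod_i\Hom(\mathbb{C}^{n_i},V_i)$ by a free $\mathrm{GL}(\vect{d})$-action. Hence its dimension is at most $\vect{n}\cdot\vect{d}+\dim\mathrm{N}(Q,\vect{d})-\dim\mathrm{GL}(\vect{d})$. Everything then reduces to $\dim\mathrm{N}(\mathrm{J},m)=m^2-m$ and $\dim\mathrm{N}(\mathrm{C}_2,(m,m))=2m^2-m$, which the paper takes from \cite[Remark~3.6]{MR5038724}.

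Your stratification by isomorphism type is the same estimate in disguise, but you never state the inequality it needs. The stratum of a nilpotent $M$ has dimension at most $(a+b)\ell-\dim\mathrm{End}(M)$. So you need $\dim\mathrm{End}(M)\geq\ell$ for every nilpotent $M$ of dimension vector $(\ell,\ell)$. Equivalently, every orbit in $\mathrm{N}(\mathrm{C}_2,(\ell,\ell))$ has dimension at most $2\ell^2-\ell$.

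This inequality is easy to check by hand. Stratify by $r=\operatorname{rk}A$ and normalize $A=\left(\begin{smallmatrix}I_r&0\\0&0\end{smallmatrix}\right)$. Then $AB$ (equivalently $BA$) is nilpotent exactly when the upper-left $r\times r$ block of $B$ is nilpotent. The stratum therefore has dimension $(2\ell r-r^2)+(\ell^2-r)$, whose maximum over $0\le r\le\ell$ is $2\ell^2-\ell$. For $\mathrm{J}$ the same free-quotient bound, with the nilpotent cone of dimension $d^2-d$, does the job.

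With these upper bounds, your lower bound closes the argument. It also supplies something the paper's ``by construction'' leaves implicit: that the generating locus meets a top-dimensional component, so the bound is an equality.
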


\begin{proof}
  By construction, the dimension of $\modulispace[0\framed,\nilpotent]{Q,\vect{d},\vect{n}}$,
  the framed moduli space of nilpotent representations,
  equals $\vect{n}\cdot\vect{d}+\dim\mathrm{N}(Q,\vect{d})-\dim {\rm GL}(\vect{d})$,
  where $\mathrm{N}(Q,d)\subset{\rm Rep}(Q,\vect{d})$ denotes the variety of nilpotent representations (the zero fibre of the quotient map).
  In the case of a symmetric quiver, \cite[Remark~3.6]{MR5038724} gives a dimension formula for $\mathrm{N}(Q,\vect{d})$.
  For the Jordan quiver with dimension~$m$, this formula gives
  \begin{equation}
    \dim\mathrm{N}(\mathrm{J},m)=2\binom{m}{2}=m^2-m.
  \end{equation}
  Since~$\dim{\rm GL}_m=m^2$,
  its factor in \eqref{equation:fiber} has dimension~$(\vect{n}\cdot\delta-1)m$.
  For the oriented two-cycle with dimension vector~$(m,m)$, the same formula gives
  \begin{equation}
    \dim\mathrm{N}(\mathrm{C}_2,(m,m))
    =2\binom{m}{2}+m^2
    =2m^2-m.
  \end{equation}
  Its framing dimension is~$((\vect{n}\cdot\alpha_p)+(\vect{n}\cdot\overline{\alpha}_p))m=(\vect{n}\cdot\delta)m$,
  while the base-change group has dimension~$2m^2$.
  Thus this factor also has dimension~$(\vect{n}\cdot\delta-1)m$,
  proving that the fibre has dimension $(\vect{n}\cdot\delta-1)k$.
  The Zariski-open Luna stratum
  corresponds to the Luna type~$(\delta^1,\ldots,\delta^1)$,
  which describes the general fiber.
  The flatness follows from miracle flatness.
\end{proof}

\subsection{Betti numbers}
\label{subsection:betti}
For an irreducible variety~$X$, we consider the following shifted version of the Poincar\'e polynomial in compactly supported intersection cohomology
\begin{equation}
  P_X(q)=(-q^{1/2})^{-\dim X}\sum_i\dim{\rm IH}_{\mathrm{c}}^i(X,\mathbb{Q})(-q^{1/2})^i\in\mathbb{Z}[q^{\pm 1/2}];
\end{equation}
and we also define $P_\emptyset(q)=0$.
The so-called quiver DT/PT correspondence
relates the shifted Poincar\'e polynomials of framed moduli spaces and unframed moduli spaces.
See \cite{MR4000572} for the original reference,
and \cite[\S2.4]{MR4283580} for the setup we will use.

Denote by~$\overline{\modulispace[\Theta\stable]{Q,\vect{d}}}$
the closure of~$\modulispace[\Theta\stable]{Q,\vect{d}}$
inside~$\modulispace[\Theta\semistable]{Q,\vect{d}}$.
In other words,
\begin{equation}
  \overline{\modulispace[\Theta\stable]{Q,\vect{d}}}
  =
  \begin{cases}
    \modulispace[\Theta\semistable]{Q,\vect{d}} & \modulispace[\Theta\stable]{Q,\vect{d}}\neq\emptyset \\
    \emptyset & \modulispace[\Theta\stable]{Q,\vect{d}}=\emptyset.
  \end{cases}
\end{equation}
The following theorem gives the version of the quiver DT/PT correspondence
proved in \cite[\S2.4]{MR4283580}.

\begin{theorem}
  \label{theorem:dt-pt}
  Assume that the Euler form of $Q$ is symmetric on $\Ker(\Theta)$, and define $\Lambda=\Ker(\Theta)\cap\mathbb{N}Q_0$.
  \begin{equation}
    \sum_{\vect{d}\in\Lambda}(-1)^{\vect{n}\cdot\vect{d}}
    P_{\modulispace[\Theta\framed]{Q,\vect{d},\vect{n}}}(q)x^{\vect{d}}
    =\Exp\left(
    \sum_{0\neq\vect{d}\in\Lambda}(-1)^{\vect{n}\cdot\vect{d}}
    P_{\mathbb{P}^{\vect{n}\cdot\vect{d}-1}}(q)
    P_{\overline{\modulispace[\Theta\stable]{Q,\vect{d}}}}(q)x^{\vect{d}}
    \right).
  \end{equation}
\end{theorem}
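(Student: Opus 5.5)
The plan is to derive the identity from two ingredients. The first is the Hall-algebra (wall-crossing) formula expressing framed moduli in terms of the generating series of semistable stacks. The second is the integrality theorem identifying quiver DT invariants with intersection cohomology of $\overline{\modulispace[\Theta\stable]{Q,\vect{d}}}$. Throughout, I would work in the completed quantum torus of the lattice $\Lambda=\Ker(\Theta)\cap\mathbb{N}Q_0$, with motivic (virtual) classes normalized by powers of $-q^{1/2}$ exactly as in the definition of $P_X(q)$. The key point is that the antisymmetrized Euler form vanishes on $\Ker(\Theta)$ by assumption. Hence the relevant part of the quantum torus is commutative and all identities become identities of ordinary power series in $x^{\vect{d}}$, $\vect{d}\in\Lambda$.

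\emph{Step 1 (framed versus semistable stacks).} Let $A_\Theta=\sum_{\vect{d}\in\Lambda}[\Rep^{\Theta\semistable}(Q,\vect{d})/\mathrm{G}_{\vect{d}}]_{\mathrm{vir}}x^{\vect{d}}$. A point of $\modulispace[\Theta\framed]{Q,\vect{d},\vect{n}}$ is a semistable $V$ of slope zero together with a framing whose image generates $V$. Stratifying the stack of all pairs (semistable $V$, arbitrary framing $\mathbb{C}^{\vect{n}}\to V$) by the subrepresentation generated by the framing gives a Hall-algebra identity. Here one uses that subrepresentations and quotients of slope zero remain semistable. The resulting identity is
\[
  \Bigl(\sum_{\vect{d}}[\modulispace[\Theta\framed]{Q,\vect{d},\vect{n}}]_{\mathrm{vir}}x^{\vect{d}}\Bigr)\cdot A_\Theta = T_{\vect{n}}(A_\Theta),
\]
where $T_{\vect{n}}$ rescales $x^{\vect{d}}$ by $(-q^{1/2})^{\vect{n}\cdot\vect{d}}$ up to the normalizing sign. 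This is Reineke's framed formula. Commutativity on $\Lambda$ is exactly what lets the product be written without twisting.

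\emph{Step 2 (DT invariants).} By Meinhardt--Reineke integrality, we can write
\[
  A_\Theta=\Exp\Bigl(\sum_{0\neq\vect{d}\in\Lambda}\frac{\Omega_{\vect{d}}(q)}{q^{-1/2}-q^{1/2}}x^{\vect{d}}\Bigr)
\]
with Laurent polynomials $\Omega_{\vect{d}}$. Moreover, $\Omega_{\vect{d}}=P_{\overline{\modulispace[\Theta\stable]{Q,\vect{d}}}}$: it is the IC Poincaré polynomial of $\modulispace[\Theta\semistable]{Q,\vect{d}}$ if the stable locus is nonempty, and $0$ otherwise. This is the point where symmetry of the Euler form on $\Ker(\Theta)$ is used essentially.

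\emph{Step 3 (dividing).} The operator $T_{\vect{n}}$ commutes with $\Exp$ up to the substitution $x^{\vect{d}}\mapsto(\pm q^{\pm1/2})^{\vect{n}\cdot\vect{d}}x^{\vect{d}}$. Therefore
\[
  \frac{T_{\vect{n}}A_\Theta}{A_\Theta}=\Exp\Bigl(\sum_{\vect{d}}\Omega_{\vect{d}}\,\frac{(-q^{1/2})^{\vect{n}\cdot\vect{d}}-(-q^{1/2})^{-\vect{n}\cdot\vect{d}}}{-q^{1/2}+q^{-1/2}}\cdot\epsilon\, x^{\vect{d}}\Bigr),
\]
for a suitable sign $\epsilon$. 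The quotient of $q$-numbers is $(-1)^{\vect{n}\cdot\vect{d}-1}P_{\mathbb{P}^{\vect{n}\cdot\vect{d}-1}}(q)$, which produces the stated right-hand side.

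\emph{Step 4 (purity).} Finally, pass from virtual motives to Poincaré polynomials. The framed moduli spaces are smooth by \cref{theorem:framed} and projective over $\modulispace[\Theta\semistable]{Q,\vect{d}}$, and their cohomology is pure (as in Meinhardt--Reineke). So the E-polynomial determines the compactly supported Poincaré polynomial, giving the left-hand side.

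I expect the main obstacle to be Step 2, namely the identification of the DT invariant with the intersection cohomology of the closure of the stable locus. I would simply invoke Meinhardt--Reineke rather than reprove it. The remaining risk is bookkeeping: tracking the sign $(-1)^{\vect{n}\cdot\vect{d}}$ and the half-integer powers of $q$ consistently between the virtual-motive normalization and $P_X$.
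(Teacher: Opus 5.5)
Your ingredients are the right ones: the Engel--Reineke framed Hall-algebra identity, Meinhardt--Reineke's identification $\Omega_{\vect d}=P_{\overline{\modulispace[\Theta\stable]{Q,\vect{d}}}}$, and purity. The paper gives no proof of its own and only quotes \cite[\S2.4]{MR4283580}, which rests on these same ingredients. However, Steps 1 and 3 are wrong as written. Step 3 is precisely where the signs $(-1)^{\vect n\cdot\vect d}$ of the statement come from, so this is not mere bookkeeping.

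In Step 1, unwinding $\widehat\Theta$-stability shows that a point of $\modulispace[\Theta\framed]{Q,\vect{d},\vect{n}}$ is a $\Theta$-semistable $V$ with a framing whose image lies in no proper subrepresentation $U$ with $\Theta(\dimvect U)=0$. The image need not generate $V$. For $\Dfour$ framed at the sink $j$, the image generates only a copy of $S_j$, so your description would give the empty set. You must stratify by the smallest slope-zero subrepresentation containing the image; it exists because slope-zero semistables form an abelian subcategory. The subrepresentation generated by the image typically has negative slope, and then neither it nor the quotient contributes to $A_\Theta$. Also, $\Hom(\mathbb{C}^{\vect n},V)\cong\mathbb{A}^{\vect n\cdot\vect d}$ contributes $q^{\vect n\cdot\vect d}$, not $(-q^{1/2})^{\vect n\cdot\vect d}$. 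Tracking virtual dimensions and using the symmetry of $\langle-,-\rangle$ on $\Lambda$, the correct identity is
\[
\Bigl(\sum_{\vect e\in\Lambda}(-q^{1/2})^{\vect n\cdot\vect e}P_{\modulispace[\Theta\framed]{Q,\vect{e},\vect{n}}}(q)\,x^{\vect e}\Bigr)\cdot A_\Theta(x)=A_\Theta(q^{\vect n}x),
\]
where $A_\Theta(q^{\vect n}x)$ denotes the substitution $x^{\vect d}\mapsto q^{\vect n\cdot\vect d}x^{\vect d}$. For one vertex without arrows this is the $q$-binomial theorem for Grassmannians.

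In Step 3, note that the paper's $\Exp$ treats $q^{1/2}$ as a line element. So substitutions $x^{\vect d}\mapsto q^{c\,\vect n\cdot\vect d}x^{\vect d}$ commute with $\Exp$, but $x^{\vect d}\mapsto(-1)^{\vect n\cdot\vect d}x^{\vect d}$ does not: $\Exp(-x)=1-x$, while $\Exp(x)$ evaluated at $-x$ is $1/(1+x)$. Hence rescaling by $(-q^{1/2})^{\vect n\cdot\vect d}$ fails to commute with $\Exp$ once $\vect n\cdot\vect d$ is odd, which is exactly the case $\vect n\cdot\alpha=1$ used later. Your ``suitable sign $\epsilon$'' hides the real content. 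There are two further slips. Your quotient of $q$-numbers equals $P_{\mathbb{P}^{N-1}}(q)$ on the nose, not $(-1)^{N-1}P_{\mathbb{P}^{N-1}}(q)$ (check $N=2$). Your exponent is what $T_{\vect n}A_\Theta/T_{-\vect n}A_\Theta$ would give, not $T_{\vect n}A_\Theta/A_\Theta$. Naively commuting would yield $\sum_{\vect d}P_{\modulispace[\Theta\framed]{Q,\vect{d},\vect{n}}}x^{\vect d}=\Exp\bigl(\sum_{\vect d}P_{\mathbb{P}^{\vect n\cdot\vect d-1}}\Omega_{\vect d}x^{\vect d}\bigr)$. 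This already fails for one vertex, no arrows, $\vect n=1$: the left side is $1+x$ and the right side is $1/(1-x)$.

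The correct finish is as follows. Set $N=\vect n\cdot\vect d$. From the identity above, $A_\Theta(q^{\vect n}x)/A_\Theta(x)=\Exp\bigl(\sum_{\vect d}\Omega_{\vect d}\frac{q^{N}-1}{q^{-1/2}-q^{1/2}}x^{\vect d}\bigr)$ holds legitimately. Then substitute $x^{\vect d}\mapsto q^{-N/2}x^{\vect d}$, which also commutes with $\Exp$. The framed factor becomes $\sum_{\vect e}(-1)^{\vect n\cdot\vect e}P_{\modulispace[\Theta\framed]{Q,\vect{e},\vect{n}}}x^{\vect e}$. The exponent becomes $\Omega_{\vect d}\frac{q^{N/2}-q^{-N/2}}{q^{-1/2}-q^{1/2}}=(-1)^{N}P_{\mathbb{P}^{N-1}}(q)\,\Omega_{\vect d}$. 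This is exactly the statement.
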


Here $\Exp$ denotes the plethystic exponential, which is defined by
\begin{equation}
  \Exp\left(\sum_{0\neq\vect{d}\in\Lambda}\sum_{i\in\frac12\mathbb{Z}}c_{\vect{d},i}q^ix^{\vect{d}}\right)
  =\prod_{0\neq\vect{d}\in\Lambda}\prod_{i\in\frac12\mathbb{Z}}(1-q^ix^{\vect{d}})^{-c_{\vect{d},i}}.
\end{equation}

Applying the quiver DT/PT-correspondence to the framed moduli spaces $X_k^{\vect{n}}(Q)$  of the previous section and using the description of moduli spaces of stable representations in \cref{lemma:stables}, we find

\begin{corollary}
  \label{corollary:dtpt}
  We have the following identity in $\mathbb{Z}[q^{\pm 1/2}][\![\Lambda]\!]$
  \begin{equation}
    \label{equation:dtpt}
    \sum_{\vect{d}\in\Lambda}(-1)^{\vect{n}\cdot\vect{d}}P_{X_{\vect{d}}^{\vect{n}}(Q)}(q)x^{\vect{d}}
    =
    \Exp\left(
    \sum_{\alpha\in P}(-1)^{\vect{n}\cdot\alpha}
    P_{\mathbb{P}^{\vect{n}\cdot\alpha-1}}(q)x^{\alpha}
    +(-1)^{\vect{n}\cdot\delta}
    P_{\mathbb{P}^{\vect{n}\cdot\delta-1}}(q)P_{\mathbb{P}^1}(q)x^{\delta}
    \right).
  \end{equation}
\end{corollary}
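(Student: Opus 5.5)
The plan is to specialize \cref{theorem:dt-pt} with $\Theta=\partial$, so $\Lambda=\Ker(\partial)\cap\mathbb{N}Q_0$. The symmetry hypothesis of that theorem holds because we assume $\{-,-\}$ vanishes identically on $\Ker(\partial)$. The left-hand side of \eqref{equation:dtpt} is then the left-hand side of \cref{theorem:dt-pt} by definition of $X_{\vect{d}}^{\vect{n}}(Q)$. All the work therefore goes into the right-hand side: I have to determine, for each $0\neq\vect{d}\in\Lambda$, the term $P_{\overline{\modulispace[\partial\stable]{Q,\vect{d}}}}(q)$.

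\textbf{Step 1: which dimension vectors contribute.} By \cref{lemma:stables}, a nonzero $\vect{d}\in\Ker(\partial)$ admits a $\partial$-stable representation only if $\vect{d}=\delta$ or $\vect{d}\in P$. For every other $\vect{d}$ the stable locus is empty, so $\overline{\modulispace[\partial\stable]{Q,\vect{d}}}=\emptyset$ and $P_\emptyset=0$; these terms drop out. In particular this kills $k\delta$ for $k\geq2$: a stable representation is indecomposable, and its dimension vector is at most $\delta$ by the trichotomy argument in \cref{lemma:stables}.

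\textbf{Step 2: identify the remaining closures.} For $\alpha\in P$, \cref{lemma:stables} gives a unique stable representation up to isomorphism. I need it to be the only semistable point too, so that the closure is a single point with $P_{\mathrm{pt}}=1$. To justify this, I would note that $\alpha$ is a real Schur root, so the stable representation is the unique representation with trivial endomorphisms, and it is dense in $\Rep(Q,\alpha)$. Hence $\modulispace[\partial\semistable]{Q,\alpha}$ is irreducible, contains a dense point, and is therefore that point. For $\vect{d}=\delta$, \cref{lemma:stables} says the semistable moduli space is $\mathbb{P}^1$ with nonempty stable locus, so the closure is $\mathbb{P}^1$; this also matches \eqref{equation:forgetful} with $k=1$. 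Substituting these values gives exactly the exponent in \eqref{equation:dtpt}, including the signs $(-1)^{\vect{n}\cdot\vect{d}}$ and the projective-space factors.

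\textbf{Expected obstacle.} The only delicate point is the claim that $\modulispace[\partial\semistable]{Q,\alpha}$ is a point for $\alpha\in P$, rather than something larger whose stable part happens to be a point. The irreducibility argument above handles this. Alternatively, the proof of \cref{lemma:luna-types} already implicitly uses that semistables of dimension $\alpha$ are built from stables in $P$, and one could appeal to that instead.
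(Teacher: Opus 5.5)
Your proposal is correct and is essentially the paper's argument: specialize \cref{theorem:dt-pt} to $\Theta=\partial$, then use \cref{lemma:stables} to see that only $\vect{d}\in P$ (contributing a point) and $\vect{d}=\delta$ (contributing $\mathbb{P}^1$) survive on the right-hand side. Your open-orbit argument that $\modulispace[\partial\semistable]{Q,\alpha}$ is a single point for $\alpha\in P$ is sound, and it justifies what the paper simply reads off from the ``unique (semi-)stable'' clause of \cref{lemma:stables}.
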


This allows us to state an explicit formula for
the shifted Poincar\'e polynomial of $X_k^{\vect{n}}(Q)$
as a sum of products of $v$-binomial coefficients,
where the $v$-binomial coefficient is defined as
\begin{equation}
  \vbinom{n}{k}_v=\prod_{i=1}^k\frac{v^{n+1-i}-v^{-n-1+i}}{v^i-v^{-i}}.
\end{equation}

\begin{corollary}
  \label{corollary:dtpt-explicit}
  For all~$k\geq 1$, we have
  \begin{equation}
    P_{X_k^{\vect{n}}(Q)}(q)
    =
    \sum_{k',k'',(k_p)_p}q^{(k'-k'')/2}
    \vbinom{-\vect{n}\cdot\delta}{k'}_{-q^{1/2}}
    \vbinom{-\vect{n}\cdot\delta}{k''}_{-q^{1/2}}
    \prod_{p=1}^t
    \vbinom{\vect{n}\cdot\alpha_p}{k_p}_{-q^{1/2}}
    \vbinom{\vect{n}\cdot\overline{\alpha}_p}{k_p}_{-q^{1/2}},
  \end{equation}
  where the sum ranges over all nonnegative $k',k'',k_1,\ldots,k_t$ summing up to $k$.
\end{corollary}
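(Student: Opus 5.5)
We extract the coefficient of $x^{k\delta}$ from both sides of \cref{corollary:dtpt}, by factoring the plethystic exponential into elementary pieces and expanding each with the $q$-binomial theorem. By the last statement of \cref{lemma:euler-form-stables}, a monomial $\prod_{\alpha\in P}x^{k_\alpha\alpha}\cdot x^{m\delta}$ contributes to $x^{k\delta}$ only if $k_{\alpha_p}=k_{\overline{\alpha}_p}$ for all $p$. We specialize the variables $x^{\vect{d}}$ along $\Lambda$, and the summand indexed by $x^{k\delta}$ on the left is exactly $(-1)^{k\,\vect{n}\cdot\delta}P_{X_k^{\vect{n}}(Q)}(q)$.

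Since $\Exp$ turns sums into products, we split the right-hand side as
\begin{equation*}
  \prod_{\alpha\in P}\Exp\bigl((-1)^{\vect{n}\cdot\alpha}P_{\mathbb{P}^{\vect{n}\cdot\alpha-1}}(q)x^\alpha\bigr)\cdot\Exp\bigl((-1)^{\vect{n}\cdot\delta}P_{\mathbb{P}^{\vect{n}\cdot\delta-1}}(q)P_{\mathbb{P}^1}(q)x^\delta\bigr).
\end{equation*}
With the shifted convention, $P_{\mathbb{P}^{m-1}}(q)=\sum_{i=0}^{m-1}q^{i-(m-1)/2}$ is the balanced quantum integer $[m]_{q^{1/2}}$. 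Also $P_{\mathbb{P}^1}(q)=q^{1/2}+q^{-1/2}$, so the $\delta$-factor splits further as $\Exp(\pm q^{1/2}[N]x^\delta)\cdot\Exp(\pm q^{-1/2}[N]x^\delta)$, where $N=\vect{n}\cdot\delta$.

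Next we expand each elementary factor by the $q$-binomial theorem. We use the identity
\begin{equation*}
  \Exp\bigl((-1)^{m}[m]_{v}\,y\bigr)=\sum_{j\geq0}\vbinom{m}{j}_{-v}y^j
\end{equation*}
for $v=q^{1/2}$, up to a choice of sign conventions. Both sides are products of finitely many factors $(1\pm v^{\cdot}y)^{\pm1}$. The case $m>0$ is the finite $q$-binomial theorem. The case $(-1)^m[m]$ with the negative upper index $-m$ is the Cauchy/negative $q$-binomial theorem. For the $\delta$-pieces, the extra prefactor $q^{\pm1/2}$ only rescales $y=x^\delta$, so each piece produces $q^{\pm j/2}$ times a binomial with upper entry $-N$. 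The sign $(-1)^N$ inside the exponent is what flips the upper index to $-\vect{n}\cdot\delta$. The $\alpha$-factors produce $\vbinom{\vect{n}\cdot\alpha_p}{k_p}$ and $\vbinom{\vect{n}\cdot\overline{\alpha}_p}{k_p}$.

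Finally, we collect the coefficient of $x^{k\delta}$. This sums over $k'$, $k''$ from the two $\delta$-pieces and over $k_p$ from the pairs $(\alpha_p,\overline{\alpha}_p)$, subject to $k'+k''+\sum_p k_p=k$. The factor $q^{(k'-k'')/2}$ comes from the two $\delta$-pieces. It remains to check that the global sign $(-1)^{kN}$ on the left matches the product of signs on the right. This holds because $N=\vect{n}\cdot\alpha_p+\vect{n}\cdot\overline{\alpha}_p$, and it is absorbed in the choice of variable $-q^{1/2}$.

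The main obstacle is sign bookkeeping: pinning down the exact $q$-binomial identity with the variable $-q^{1/2}$ and negative upper index so that all signs cancel. I would verify it first on a single factor with $m=1,2$, and then fix the general identity by comparing the product expansions $\prod_i(1-v^{m-1-2i}y)^{\mp1}$.
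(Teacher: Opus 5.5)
Your route is the one the paper takes: factor the plethystic exponential in \cref{corollary:dtpt} into $\alpha$-factors and a $\delta$-factor, expand each by the finite, respectively negative, $q$-binomial theorem, and extract the coefficient of $x^{k\delta}$ using the last statement of \cref{lemma:euler-form-stables}. The gap is in the sign bookkeeping you defer. That bookkeeping is the actual content of the argument, and your stated conventions get it wrong.

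With the paper's normalization $P_X(q)=(-q^{1/2})^{-\dim X}\sum_i\dim\mathrm{IH}^i_{\mathrm{c}}(X,\mathbb{Q})(-q^{1/2})^i$, the prefactor contributes a sign $(-1)^{\dim X}$. Writing $[m]_v=(v^m-v^{-m})/(v-v^{-1})$, this gives
\[
  P_{\mathbb{P}^{m-1}}(q)=(-1)^{m-1}[m]_{q^{1/2}}=[m]_{-q^{1/2}},
  \qquad
  P_{\mathbb{P}^1}(q)=-(q^{1/2}+q^{-1/2}).
\]
With your convention the identity you propose fails in two ways. For $m$ even, $\Exp([m]_{q^{1/2}}y)$ is an infinite series, so it cannot equal a sum over $0\leq j\leq m$. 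For $N=\vect{n}\cdot\delta$ odd, your $\delta$-exponent becomes negative and produces $\vbinom{N}{k'}$ instead of $\vbinom{-N}{k'}$. In particular, it is not $(-1)^N$ that flips the upper index.

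Correctly, $(-1)^mP_{\mathbb{P}^{m-1}}(q)=-[m]_{q^{1/2}}$ for every $m$. So each $\alpha$-factor is the finite product $\prod_{i=0}^{m-1}(1-q^{i-(m-1)/2}x^{\alpha})$ with $m=\vect{n}\cdot\alpha$. On the other side, $(-1)^NP_{\mathbb{P}^{N-1}}(q)P_{\mathbb{P}^1}(q)=[N]_{q^{1/2}}(q^{1/2}+q^{-1/2})$ has positive coefficients for every $N$. It is the minus sign of $P_{\mathbb{P}^1}$ that makes the $\delta$-factor an infinite product.

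Use $\vbinom{n}{j}_{-v}=(-1)^{j(n+1)}\vbinom{n}{j}_v$, which holds for all integers $n$. With $v=q^{1/2}$, the two $q$-binomial theorems then give
\[
  \Exp(-[m]_vy)=\sum_{j=0}^{m}(-1)^{mj}\vbinom{m}{j}_{-v}y^j,
  \qquad
  \Exp([N]_vy)=\sum_{j\geq0}(-1)^{Nj}\vbinom{-N}{j}_{-v}y^j,
\]
applied to $y=x^{\alpha}$ and to $y=q^{\pm1/2}x^{\delta}$ respectively. These are exactly the two identities in the paper's proof.

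The signs you dropped multiply to $(-1)^{\sum_{\alpha}k_\alpha\,\vect{n}\cdot\alpha+(k'+k'')N}=(-1)^{\vect{n}\cdot k\delta}$. This cancels the $(-1)^{kN}$ on the left by linearity alone.

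In the five Fano cases ($N=2$, $\vect{n}\cdot\alpha=1$) the two conventions happen to give the same exponents, so checks there would not expose the error. Your planned test of a single $\alpha$-factor with $m=2$ would expose it.
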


\begin{proof}
  From the $q$-binomial theorem, we derive the following identities for~$c\in\mathbb{N}$:
  \begin{equation}
    \begin{aligned}
      \Exp((-1)^cP_{\mathbb{P}^{c-1}}(q)z)
      &=\sum_{k=0}^c(-1)^{ck}\vbinom{c}{k}_{-q^{1/2}}z^k, \\
      \Exp((-1)^cP_{\mathbb{P}^{c-1}}(q)P_{\mathbb{P}^1}(q)z)
      &=\sum_{k\geq 0}(-1)^{ck}
      \sum_{k'+k''=k}q^{(k'-k'')/2}
      \vbinom{-c}{k'}_{-q^{1/2}}
      \vbinom{-c}{k''}_{-q^{1/2}}z^k.
    \end{aligned}
  \end{equation}
  Using these identities and the multiplicativity of~$\Exp$,
  we can express the right hand side of \eqref{equation:dtpt} as a sum over~$\Lambda$
  and compare the coefficient of~$x^{k\delta}$ on both sides, which yields
  \begin{equation}
    P_{X_k^{\vect{n}}(Q)}(q)
    =\sum_{(k_\alpha)_{\alpha\in P},\,k',k''}
    q^{(k'-k'')/2}
    \vbinom{-\vect{n}\cdot\delta}{k'}_{-q^{1/2}}
    \vbinom{-\vect{n}\cdot\delta}{k''}_{-q^{1/2}}
    \prod_{\alpha\in P}\vbinom{\vect{n}\cdot\alpha}{k_\alpha}_{-q^{1/2}},
  \end{equation}
  where the sum ranges over all  $0\leq k_\alpha\leq \vect{n}\cdot\alpha$ for $\alpha\in P$ and $k',k''\geq 0$ such that
  \begin{equation}
    \sum_{\alpha\in P}k_\alpha\alpha+(k'+k'')\delta=k\delta.
  \end{equation}
  By \cref{lemma:euler-form-stables} we know that this is only possible if $k_{\alpha_p}=k_{\overline{\alpha}_p}=:k_p$ for all $p=1,\ldots,t$, as well as $\sum_pk_p+k'+k''=k$. The claimed formula follows.
\end{proof}

\section{Identifying the Fano series}
We now ask which of the framed moduli spaces constructed above
are Fano for every~$k$.
For a framed quiver moduli space,
this can be checked by comparing its framing stability with the canonical stability of the framed quiver.
The resulting numerical conditions leave the five quiver and framing pairs
displayed in \cref{table:5-quivers}.
We then specialize the Betti-number formula from \cref{subsection:betti} to these pairs.

\subsection{Fanoness of framed extended Dynkin quiver moduli}

\begin{theorem}
  \label{theorem:fano-series}
  Assume that~$\vect{n}\cdot\delta=2$
  and~$n_i\geq1$ whenever~$\partial(\vect{i})\leq0$.
  Then, for every~$k\geq1$,
  the framed moduli space~$X_k^{\vect{n}}(Q)$
  is a Fano variety of Picard rank~$|Q_0|$,
  index~$\gcd(2k,n_i : i\in Q_0)$,
  and dimension~$2k$.
  It is rational, of pure Hodge--Tate type, and infinitesimally rigid.
\end{theorem}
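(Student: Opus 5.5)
The plan is to apply \cref{theorem:quiver-moduli-properties} to the framed quiver $\widehat{Q}$ with dimension vector $\widehat{k\delta}=k\delta+\vect{i}_0$. Everything then reduces to two checks: that $\widehat{\partial}$ lies in the same chamber as $\Theta_\can$, and that the hypotheses of that theorem hold. The quiver $\widehat{Q}$ is acyclic, since $Q$ is acyclic and $i_0$ is a source. The vector $\widehat{k\delta}$ is sincere because $\delta$ is. The moduli space is nonempty because, by \cref{corollary:flat-forgetful}, $p_k$ is surjective onto $\mathbb{P}^k$ with fibres $(\mathbb{P}^1)^k$ in general. Once the theorem applies, the remaining numerical claims are read off:
\begin{itemize}[noitemsep]
  \item the dimension is $(\vect{n}\cdot\delta)k=2k$ by \cref{theorem:framed};
  \item the Picard rank is $|\supp(\widehat{k\delta})|-1=|Q_0|$;
  \item rationality, pure Hodge--Tate type and infinitesimal rigidity come for free.
\end{itemize}

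\textbf{Step 1: compute $\Theta_\can$.} The Euler form of $\widehat{Q}$ is
\[
\langle \vect{d}+a\vect{i}_0,\vect{e}+b\vect{i}_0\rangle=\langle\vect{d},\vect{e}\rangle_Q+ab-a\,\vect{n}\cdot\vect{e}.
\]
Using $\{\delta,-\}=2\partial$ and $\vect{n}\cdot\delta=2$, this gives
\[
\Theta_\can(\vect{e}+b\vect{i}_0)=2k\partial(\vect{e})-\vect{n}\cdot\vect{e}+2kb,
\qquad\text{that is,}\qquad
\Theta_\can=2k\partial-\vect{n}+2k\vect{i}_0^*.
\]
The index is therefore $\gcd(2k\partial(\vect{i})-n_i,\,2k)=\gcd(2k,n_i : i\in Q_0)$. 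Formally this is $\widehat{\partial}_{\lambda,C}$ with $\lambda=\vect{n}$ and $C=2k$. However, $\vect{n}$ need not be positive and the inequality $\lambda(k\delta)<C\gcd(\partial)$ may fail, so I will not invoke \cref{theorem:framed}. Instead I will compare signs directly.

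\textbf{Step 2: same chamber and coprimality.} By complementation it suffices to treat subdimension vectors $0<\vect{e}\leq k\delta$ with $b=0$. I will show that $\Theta_\can(\vect{e})$ and $\widehat{\partial}(\vect{e})=C\partial(\vect{e})-\lambda(\vect{e})$ are both nonzero and of the same sign. There are three cases.
\begin{itemize}[noitemsep]
  \item If $\partial(\vect{e})\leq-1$, both are negative.
  \item If $\partial(\vect{e})\geq1$, both are nonnegative, since $\vect{n}\cdot\vect{e}\leq 2k$. Equality would force $\partial(\vect{e})=1$ and $\vect{n}\cdot(k\delta-\vect{e})=0$. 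By the hypothesis $n_i\geq1$ whenever $\partial(\vect{i})\leq0$, the vector $k\delta-\vect{e}$ is then supported on vertices with $\partial(\vect{i})>0$. This gives $\partial(k\delta-\vect{e})>0$ (or $k\delta-\vect{e}=0$, excluded unless $\vect{e}=k\delta$, in which case $b=0$ makes the vector proper and we get $2k-2k>0$ fails only formally). This contradicts $\partial(k\delta-\vect{e})=-1$.
  \item If $\partial(\vect{e})=0$, then $\widehat{\partial}(\vect{e})<0$. Also $\vect{n}\cdot\vect{e}>0$, since otherwise $\vect{e}$ would be supported where $\partial>0$, forcing $\partial(\vect{e})>0$. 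Hence $\Theta_\can(\vect{e})<0$ as well.
\end{itemize}
So neither parameter lies on a wall, $\widehat{k\delta}$ is coprime for both, and they share a chamber. This also gives the Fano property via \cref{theorem:quiver-moduli-properties}.

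\textbf{Step 3: ample stability (main obstacle).} I expect this to be the hardest step. I will verify the criterion $\langle\vect{f},\widehat{k\delta}-\vect{f}\rangle\leq-2$ for every destabilizing $\vect{f}$. For $\vect{f}=\vect{e}$ with $b=0$ and $\partial(\vect{e})>0$, we have
\[
\langle\vect{e},k\delta-\vect{e}+\vect{i}_0\rangle=-k\partial(\vect{e})-\langle\vect{e},\vect{e}\rangle,
\]
using $(\delta,-)=0$. For $\vect{f}=\vect{e}+\vect{i}_0$ one gets a similar expression, involving $\vect{n}\cdot(k\delta-\vect{e})$ and $\partial(k\delta-\vect{e})<0$. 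I will bound $\langle\vect{e},\vect{e}\rangle$ by decomposing $\vect{e}$ via Kac's canonical decomposition into preprojective, regular and preinjective roots, on each of which $\langle-,-\rangle\leq1$ and cross terms are controlled. I will use the small explicit list of four quivers together with \cref{lemma:euler-form-stables} to handle the regular part. The boundary cases where the crude bound gives only $-1$, typically $k=1$ with $\partial(\vect{e})=1$ and $\vect{e}$ real, would be checked by hand; there I would fall back on a direct codimension count of the corresponding Harder--Narasimhan stratum, exploiting the hypothesis $n_i\geq1$ on vertices with $\partial(\vect{i})\leq0$.
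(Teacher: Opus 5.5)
Your Steps 1 and 2 are correct. The paper handles the chamber question differently. It takes an integer $\alpha>-\partial(\vect{i})$ for all $i$, $C=2k\alpha+1$ and $\lambda=\partial+\alpha\vect{n}$, so that $\widehat{\partial}=\alpha\Theta_\can$ exactly. Positivity of $\lambda$ comes from the hypothesis on $\vect{n}$, and $k\lambda(\delta)=2k\alpha<C$ comes from $\vect{n}\cdot\delta=2$. Your direct sign comparison is equally valid. It also lists the destabilizing subdimension vectors: $\vect{e}$ with $\partial(\vect{e})>0$, and $\vect{e}+\vect{i}_0$ with $\partial(k\delta-\vect{e})\leq0$. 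The parenthetical about $\vect{e}=k\delta$ in your second bullet is muddled but harmless, since $\partial(k\delta)=0$ puts that vector in your third bullet.

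The gap is Step 3, which is only a plan. The plan misses the fact that turns it into a two-line check. The Tits form $\langle\vect{e},\vect{e}\rangle$ of an extended Dynkin quiver is positive semidefinite with radical $\mathbb{Z}\delta$ (recalled in \cref{subsection:extended-dynkin}). Hence $\langle\vect{e},\vect{e}\rangle\geq1$ for every $\vect{e}\notin\mathbb{Z}\delta$, in particular whenever $\partial(\vect{e})\neq0$. For $b=0$ this gives $-k\partial(\vect{e})-\langle\vect{e},\vect{e}\rangle\leq-k-1\leq-2$ at once. Bounding $\langle-,-\rangle\leq1$ on the summands of a canonical decomposition points the wrong way. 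What such a decomposition can supply is the lower bound: nonnegative cross terms and at least one real summand. The ``boundary case'' you single out is not one either: for real $\vect{e}$ one has $\langle\vect{e},\vect{e}\rangle=1$, and the value is at most $-2$. So the fallback to Harder--Narasimhan codimension counts is unnecessary, and in any case you do not carry it out.

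There is also a wrong claim. For a destabilizing $\vect{e}+\vect{i}_0$ you assert $\partial(k\delta-\vect{e})<0$. By your own Step 2, the complement $\vect{f}=k\delta-\vect{e}$ only satisfies $\partial(\vect{f})\leq0$. The omitted case $\partial(\vect{f})=0$ (e.g.\ $\vect{f}=\delta$, or $\vect{f}\in P$) is exactly where the criterion is sharp. There $\langle\vect{e}+\vect{i}_0,\vect{f}\rangle=-\langle\vect{f},\vect{f}\rangle-\vect{n}\cdot\vect{f}$. When $\langle\vect{f},\vect{f}\rangle\geq1$ you need $\vect{n}\cdot\vect{f}\geq1$, which Step 2 provides. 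When $\vect{f}=m\delta$, so that $\langle\vect{f},\vect{f}\rangle=0$, you need $\vect{n}\cdot\delta=2$ to get $-2m\leq-2$. With $\vect{n}\cdot\delta=1$, the subdimension vector $(k-1)\delta+\vect{i}_0$ would give only $-1$. So this is the case where the lower bound $\vect{n}\cdot\delta\geq2$ is indispensable for ample stability, and your proposal does not treat it.
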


\begin{proof}
  By \cref{definition:framed},
  the space~$X_k^{\vect{n}}(Q)$ is a moduli space for~$\widehat{Q}$
  with dimension vector~$\widehat{\vect{d}}=k\delta+\vect{i}_0$
  and stability parameter
  \begin{equation}
    \widehat{\Theta}=C\partial-\lambda+k\lambda(\delta)\vect{i}_0^*,
  \end{equation}
  where~$\lambda\in\mathbb{Z}Q_0^\vee$ is positive
  and~$k\lambda(\delta)<C\gcd(\partial)$.
  Direct inspection of the four relevant quiver types gives~$\gcd(\partial)=1$.
  The canonical stability of~$\widehat{\vect{d}}$ is
  \begin{equation}
    \Theta_\can=2k\partial-\vect{n}+k(\vect{n}\cdot\delta)\vect{i}_0^*.
  \end{equation}
  Comparing coefficients at the vertices of~$Q$ and at the framing vertex,
  the equality~$\widehat{\Theta}=\alpha\Theta_\can$ for~$\alpha>0$ amounts to
  \begin{equation}
    \begin{aligned}
      C\partial-\lambda &= \alpha(2k\partial-\vect{n}),\\
      k\lambda(\delta) &= k\alpha(\vect{n}\cdot\delta).
    \end{aligned}
  \end{equation}
  Evaluating the first equality at~$\delta$ gives the second,
  since~$\partial(\delta)=0$.
  Choose a positive integer~$\alpha$ with~$\alpha>-\partial(\vect{i})$ for every~$i\in Q_0$,
  and set~$C=2k\alpha+1$ and~$\lambda=\partial+\alpha\vect{n}$.
  Then the first equality holds.
  The assumptions on~$\vect{n}$ ensure that~$\lambda$ is positive,
  and~$\vect{n}\cdot\delta=2$ gives~$k\lambda(\delta)=2k\alpha<C$.
  Thus~$\widehat{\Theta}=\alpha\Theta_\can$ with an admissible choice of~$C$ and~$\lambda$.

  We verify strong ample stability.
  If~$0<\vect{e}\leq k\delta$ and~$\widehat{\Theta}(\vect{e})>0$,
  then~$2k\partial(\vect{e})>\vect{n}\cdot\vect{e}$,
  so~$\partial(\vect{e})>0$ and~$\langle\vect{e},\vect{e}\rangle_Q\geq 1$.
  Since~$\langle\vect{e},\delta\rangle=-\partial(\vect{e})$,
  \begin{equation}
    \langle\vect{e},k\delta-\vect{e}+\vect{i}_0\rangle_{\widehat{Q}}
    =-k\partial(\vect{e})-\langle\vect{e},\vect{e}\rangle_Q
    \leq-k-1\leq-2.
  \end{equation}
  For a subdimension vector~$\vect{e}+\vect{i}_0<\widehat{\vect{d}}$
  with~$\widehat{\Theta}(\vect{e}+\vect{i}_0)>0$,
  put~$\vect{f}=k\delta-\vect{e}$.
  Then~$\vect{f}\neq0$ and~$\widehat{\Theta}(\vect{f})<0$,
  while
  \begin{equation}
    \langle\vect{e}+\vect{i}_0,\vect{f}\rangle_{\widehat{Q}}
    =k\partial(\vect{f})-\langle\vect{f},\vect{f}\rangle_Q-\vect{n}\cdot\vect{f}.
  \end{equation}
  If~$\partial(\vect{f})\neq0$,
  the right-hand side is at most~$-2$:
  for positive defect use~$\vect{n}\cdot\vect{f}>2k\partial(\vect{f})$,
  and for negative defect use~$\langle\vect{f},\vect{f}\rangle_Q\geq1$.
  If~$\partial(\vect{f})=0$,
  then~$\vect{n}\cdot\vect{f}>0$;
  either~$\langle\vect{f},\vect{f}\rangle_Q\geq1$,
  or~$\vect{f}$ is a positive multiple of~$\delta$ and~$\vect{n}\cdot\vect{f}\geq2$.
  This proves the same bound.
  The Fano property and the stated invariants now follow from \cref{theorem:quiver-moduli-properties}.
\end{proof}

\begin{remark}
  With some extra effort, one can in fact prove that the conditions in the theorem are necessary for the equations and inequalities in the proof to be fulfillable for all~$k\geq 1$.
\end{remark}

\begin{proposition}
  \label{proposition:5-series}
  The conditions~$\vect{n}\cdot\delta=2$ and~$n_i\geq 1$ whenever~$\partial(\vect{i})\leq 0$
  hold precisely for the quivers and framing data in \cref{table:5-quivers}.
  Consequently, the index of $X_k^{\vect{n}}(Q)$ is one, except in the first case, where it is two. We also see that $\dim{\rm Aut}(X_k^{\vect{n}}(Q))^0=10-2|Q_0|$.
\end{proposition}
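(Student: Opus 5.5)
The plan is to run the classification orientation by orientation. \cref{lemma:4-quivers} only fixes the underlying graph, so for each of the graphs $\Aone$, $\Atwo$, $\Athree$, $\Dfour$ I will go through all acyclic orientations up to relabelling. For each one I compute the defect $\partial(\vect{i})=\langle\delta,\vect{i}\rangle=\delta_i-\sum_{\beta\colon j\to i}\delta_j$ at every vertex. I then list the vertices with $\partial(\vect{i})\leq0$, each of which must carry $n_i\geq1$, and check whether this is compatible with $\vect{n}\cdot\delta=2$. This also disposes of any orientations for which $\{-,-\}$ fails to vanish on $\Ker(\partial)$: they should simply admit no admissible framing, so I need not decide in advance which orientations satisfy the symmetry hypothesis.

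The case analysis I expect is as follows.
\begin{itemize}[noitemsep]
  \item $\Aone$, Kronecker $i\rightrightarrows j$: $\partial(\vect{i})=1$ and $\partial(\vect{j})=-1$. So $n_j\geq1$, and $\vect{n}\cdot\delta=n_i+n_j=2$ leaves exactly $2\vect{j}$ and $\vect{i}+\vect{j}$.
  \item $\Atwo$: every acyclic orientation is $i\to j\to k$, $i\to k$, with defects $(1,0,-1)$. This forces $n_j,n_k\geq1$, hence $\vect{n}=\vect{j}+\vect{k}$.
  \item $\Athree$, alternating orientation with two sources and two sinks: defects $\pm1$, forcing $\vect{n}=\vect{j}+\vect{k}$. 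In every other orientation there is a path of length at least two, which produces at least three vertices with $\partial\leq0$. Since all $\delta_i=1$, this contradicts $\vect{n}\cdot\delta=2$.
  \item $\Dfour$ with $r$ arms pointing out of the centre: a source leaf has defect $1$, a sink leaf has defect $-1$, and the centre has defect $2-(4-r)=r-2$. If $r=0$, only the centre needs framing, and $\delta_{\text{centre}}=2$ gives $\vect{n}=\vect{j}$. If $r=1$ or $r=2$, the centre (weight $2$) and at least one sink leaf both need framing, so $\vect{n}\cdot\delta\geq3$. If $r\geq3$, at least three sink leaves need framing.
\end{itemize}
This recovers exactly \cref{table:5-quivers}.

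For the index I apply \cref{theorem:fano-series}, which gives the index as $\gcd(2k,n_i)$. In $\AoneA$ all $n_i$ are even, so the gcd is $2$. In the other four cases some $n_i=1$, so the gcd is $1$.

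For the automorphism group, \cref{theorem:quiver-moduli-properties} identifies the global vector fields with $\mathrm{HH}^1(\mathbb{C}\widehat{Q})$. I will use Happel's formula for a connected acyclic quiver,
\[
\dim\mathrm{HH}^1=1-|\widehat{Q}_0|+\sum_{\beta\colon a\to b}\#\{\text{paths } a\to b\},
\]
and evaluate it on each of the five framed quivers in the table. For example, $\QfrAtwo$ gives $1-4+7=4$, and $\QfrAoneA$ gives $1-3+8=6$. In each case the result should match $10-2|Q_0|$, where $|Q_0|$ counts the vertices of the unframed quiver.

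The main obstacle is making the orientation enumeration exhaustive and correct. This is mainly an issue for $\Athree$, where I need a clean argument that a non-alternating orientation always has at least three vertices of nonpositive defect, and for $\Dfour$, where I must keep track of $\delta_{\text{centre}}=2$. The path counts in Happel's formula are routine once the framed quivers are written down.
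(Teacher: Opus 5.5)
Your proposal is correct and is essentially the paper's own argument: the paper compresses the classification into ``direct inspection'' of the relevant orientations, and gets the rest from \cref{theorem:quiver-moduli-properties}, namely the index as $\gcd(2k,n_i)$ via \cref{theorem:fano-series}, and $\dim\mathrm{Aut}^0=\dim\mathrm{HH}^1(\mathbb{C}\widehat{Q})$ via Happel's formula, all of which you make explicit; your defect computations and path counts are right. For the $\Athree$ step you flag as the main obstacle, here is a clean argument: since $\delta=(1,1,1,1)$, every non-source vertex has $\partial\leq 0$, and an acyclic orientation of the $4$-cycle has two sources only when it is the alternating one; otherwise it has a single source, leaving three non-sources.
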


\begin{proof} This again follows by direct inspection of all quivers of the relevant extended Dynkin types.
The geometric properties follow from \cref{theorem:quiver-moduli-properties}.
\end{proof}

\subsection{Betti numbers of the Fano series}
A surprising feature for these quiver moduli
is that we can write down
a closed formula for the Betti numbers $\mathrm{b}_{2i}(X_k^{\vect{n}}(Q))$.
This will use the quiver DT/PT correspondence,
already used in \cref{corollary:dtpt,corollary:dtpt-explicit}.
The cohomological Hall algebra action in the next section gives a structural reason
to consider the cohomology of all members of a series together.

\begin{theorem}
  \label{theorem:betti}
  In any of the five cases of \cref{proposition:5-series},
  we have~$\mathrm{b}_{2i}(X_k^{\vect{n}}(Q))=a_i$ for~$i\leq k$, where~$a_i$ is given by
  \begin{equation}
    \sum_{i\geq 0}a_ix^i=\frac{(1+x)^{|Q_0|}}{(1-x^2)^3}.
  \end{equation}
  Explicitly,
  \begin{enumerate}
    \item for $Q$ of type $\tildeA_1$, we have $a_i=\lfloor(i+2)^2/4\rfloor$,
    \item for $Q$ of type $\tildeA_2$, we have $a_i=(i+1)(i+2)/2$,
    \item for $Q$ of type $\tildeA_3$, we have $a_i=(i+1)^2$,
    \item for $Q$ of type $\tildeD_4$, we have $a_i=2i(i+1)+1$.
  \end{enumerate}
\end{theorem}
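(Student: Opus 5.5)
The plan is to specialize \cref{corollary:dtpt-explicit} to the five pairs of \cref{table:5-quivers} and to package the result as a generating function in an auxiliary variable~$x$ tracking~$k$. Since~$X_k^{\vect{n}}(Q)$ is smooth and projective of dimension~$2k$ with only even cohomology (\cref{theorem:fano-series}), we have~$\sum_i\mathrm{b}_{2i}q^i=q^{k}P_{X_k^{\vect{n}}(Q)}(q)$.

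\textbf{Step 1: local framing values.} First I will determine, for each of the five cases, the values~$\vect{n}\cdot\alpha_p$ and~$\vect{n}\cdot\overline{\alpha}_p$. They sum to~$\vect{n}\cdot\delta=2$, so each pair is~$(2,0)$, $(1,1)$ or~$(0,2)$. I expect a direct inspection of the regular simples in the tubes of rank two, using the explicit roots behind \cref{lemma:euler-form-stables}, to show that in all five cases every pair is~$(1,1)$. This is the step I am least sure of and would check case by case. If a pair~$(2,0)$ occurred, the factor~$\vbinom{0}{k_p}$ would kill all terms with~$k_p\geq1$, and the final count of factors~$(1+q)$ would come out wrong.

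\textbf{Step 2: evaluating the binomials.} Granting Step~1, each two-cycle factor is~$\vbinom{1}{k_p}^2$, which is nonzero only for~$k_p\in\{0,1\}$ and equals~$1$ there. For the~$\delta$-part I will use~$\vbinom{-2}{m}_v=(-1)^m\vbinom{m+1}{m}_v$ with~$v=-q^{1/2}$. This is the symmetric Poincar\'e polynomial of~$\mathbb{P}^m$, namely~$q^{-m/2}[m+1]_q$ up to sign, where~$[m+1]_q=1+q+\dots+q^m$; the signs cancel against the powers of~$-q^{1/2}$. Distributing the unshifting factor~$q^k$ as~$q^{k'}q^{k''}\prod_p q^{k_p}$, the summand attached to~$(k',k'',(k_p)_p)$ becomes
\begin{equation*}
  q^{k'}[k'+1]_q\,[k''+1]_q\prod_p q^{k_p}.
\end{equation*}

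\textbf{Step 3: the generating function.} Summing over~$k$ then gives
\begin{equation*}
  \sum_{k\geq0}\Bigl(\sum_i\mathrm{b}_{2i}(X_k^{\vect{n}}(Q))q^i\Bigr)x^k
  =\frac{(1+qx)^{t}}{(1-qx)(1-q^2x)}\cdot\frac{1}{(1-x)(1-qx)}.
\end{equation*}
Here~$\sum_{m\geq0}q^m[m+1]_qx^m=1/((1-qx)(1-q^2x))$ and~$\sum_{m\geq0}[m+1]_qx^m=1/((1-x)(1-qx))$.

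\textbf{Step 4: stabilization.} Write the right-hand side as~$\frac{1}{1-x}H(x,q)$. Every monomial~$x^jq^e$ occurring in~$H$ satisfies~$e\geq j$. Hence the coefficient of~$q^i$ in the coefficient of~$x^k$ is independent of~$k$ once~$i\leq k$, and equals the coefficient of~$q^i$ in~$H(1,q)$. This gives
\begin{equation*}
  H(1,q)=\frac{(1+q)^t}{(1-q)^2(1-q^2)}=\frac{(1+q)^{t+2}}{(1-q^2)^3}=\frac{(1+q)^{|Q_0|}}{(1-q^2)^3},
\end{equation*}
using~$t=|Q_0|-2$. The explicit formulas for~$\tildeA_1,\tildeA_2,\tildeA_3,\tildeD_4$ then follow by expanding~$(1+q)^{m}/(1-q^2)^3$ for~$m=2,3,4,6$, e.g.~$(1+q)^3/(1-q^2)^3=1/(1-q)^3$ gives~$\binom{i+2}{2}$.

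I expect the main obstacle to be Step~1, that is, verifying that every local framing pair is~$(1,1)$. The bookkeeping of signs and of the~$q^{(k'-k'')/2}$ shift in Step~2 also needs care: an error there changes the exponent of~$(1+q)$ in the final answer.
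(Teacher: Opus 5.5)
Your argument is correct and is essentially the paper's proof. Both arguments specialize the DT/PT identity using $\vect{n}\cdot\delta=2$ and $\vect{n}\cdot\alpha=1$ for all $\alpha\in P$; the paper also only checks this by direct inspection, and it does hold in all five cases. Your series is exactly the paper's $\sum_k P_{X_k^{\vect{n}}(Q)}(q)x^k=\frac{(1+x)^{|Q_0|-2}}{(1-qx)(1-x)^2(1-q^{-1}x)}$ after the substitution $x\mapsto qx$. The only difference is the coefficient extraction: you use a stabilization argument, whereas the paper uses $\frac{1-x^2}{(1-qx)(1-q^{-1}x)}=\frac{1}{1-qx}+\frac{1}{1-q^{-1}x}-1$ to get $P_{X_k^{\vect{n}}(Q)}(q)=a_k+\sum_{j=1}^k a_{k-j}(q^j+q^{-j})$ in one step.

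One slip to fix: $\tildeD_4$ has $|Q_0|=5$, so the last expansion is $(1+q)^5/(1-q^2)^3=(1+q)^2/(1-q)^3$. With exponent $6$ you would get $a_1=6$ instead of $5$.
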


\begin{proof}
  We specialize \cref{corollary:dtpt} to the present situation where~$\vect{n}\cdot\delta=2$
  and, again by direct inspection of all five cases, $\vect{n}\cdot\alpha=1$ for all~$\alpha\in P$.
  This yields
  \begin{equation}
    \label{equation:dtpt-fano}
    \sum_{\vect{d}\in\Lambda}(-1)^{\vect{n}\cdot\vect{d}}P_{X_{\vect{d}}^{\vect{n}}(Q)}(q)x^{\vect{d}}
    =
    \Exp\left(-\sum_{\alpha\in P}x^{\alpha}+P_{\mathbb{P}^1}(q)^2x^{\delta}\right)
    =\frac{\prod_{\alpha\in P}(1+x^\alpha)}
    {(1-qx^\delta)(1-x^\delta)^2(1-q^{-1}x^\delta)}.
  \end{equation}
  
  Comparing coefficients of powers of~$x^\delta$ on both sides as in the proof of \cref{corollary:dtpt-explicit}
  and noting that~$t=|Q_0|-2$, we find
  \begin{equation}\label{eqn:poincareseries}
    \sum_{k\geq 0} P_{X_{k}^{\vect{n}}(Q)}(q)x^k
    =\frac{(1+x)^{|Q_0|-2}}{(1-qx)(1-x)^2(1-q^{-1}x)}.
  \end{equation}
  
  Denoting by $F(x)$ the series in the claim of the theorem, we can thus calculate
  \begin{equation}
    \begin{aligned}
      \sum_{k\geq 0}P_{X_{k}^{\vect{n}}(Q)}(q)x^k
      &=F(x)\frac{1-x^2}{(1-qx)(1-q^{-1}x)}\\
      &=F(x)\left(\frac{1}{1-qx}+\frac{1}{1-q^{-1}x}-1\right)\\
      &=F(x)\left(1+\sum_{j\geq 1}(q^j+q^{-j})x^j\right)\\
      &=\sum_{k\geq 0}
      \left(a_k+\sum_{j=1}^k a_{k-j}(q^j+q^{-j})\right)x^k,
    \end{aligned}
  \end{equation}
  proving the first claim.

  To prove the second claim,
  we start with the case $|Q_0|=3$.
  Then $F(x)=1/(1-x)^3$, whose $i$-th coefficient is well known to equal $(i+1)(i+2)/2$.
  Passing to the case $|Q_0|=4$ amounts to taking consecutive sums of the $a_i$,
  and indeed $i(i+1)/2+(i+1)(i+2)/2=(i+1)^2$.
  Similarly, $i^2+(i+1)^2=2i(i+1)+1$, and the case $|Q_0|=5$ follows.
  For the case $|Q_0|=2$, we have to find a series of integers starting with $1$
  and with consecutive sums evaluating to $(i+1)(i+2)/2$,
  and indeed $\lfloor (i+1)^2/4\rfloor+\lfloor (i+2)^2/4\rfloor=(i+1)(i+2)/2$ is easily verified.
  This proves the second claim of the theorem.
\end{proof}

\begin{corollary}
  The generating function of Euler characteristics of the moduli spaces $X_k^{\vect{n}}(Q)$ is given by
  \begin{equation}
     \sum_{k\geq 0}\chi(X_k^{\vect{n}}(Q))x^k=\frac{(1+x)^{|Q_0|-2}}{(1-x)^4}.
  \end{equation}
\end{corollary}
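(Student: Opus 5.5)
Specialize the Poincaré series identity \eqref{eqn:poincareseries} from the proof of \cref{theorem:betti} at $q=1$. The varieties $X_k^{\vect{n}}(Q)$ are smooth and projective, so compactly supported intersection cohomology coincides with ordinary cohomology. By pure Hodge--Tate type (\cref{theorem:fano-series}) all odd cohomology vanishes. Hence the shifted Poincaré polynomial $P_{X}(q)=q^{-\dim X/2}\sum_i \mathrm{b}_{2i}(X)q^{i}$ has only nonnegative coefficients, with no sign cancellation from the factor $(-q^{1/2})^{i}$. Evaluating it at $q=1$ therefore gives $\sum_i\mathrm{b}_{2i}(X)=\chi(X)$. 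The $k=0$ term is the point, with $P=1$, which matches the constant term of the claimed series.

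Setting $q=1$ in \eqref{eqn:poincareseries} turns the denominator $(1-qx)(1-x)^2(1-q^{-1}x)$ into $(1-x)^4$. Since $P_{\mathbb{P}^1}(1)=2$, this is consistent with the term $P_{\mathbb{P}^1}(q)^2x^\delta$ in \eqref{equation:dtpt-fano}. This evaluation yields exactly the claimed formula. Equivalently, one can evaluate the final expression in the proof of \cref{theorem:betti}: $\chi(X_k)=a_k+2\sum_{j=1}^k a_{k-j}$, which is the coefficient of $x^k$ in $F(x)\cdot\frac{1+x}{1-x}=\frac{(1+x)^{|Q_0|}}{(1-x^2)^3}\cdot\frac{1+x}{1-x}$. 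This simplifies to the same series, and it makes a good sanity check.

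The only point needing care, and the main obstacle, is justifying that specializing $q^{1/2}\mapsto 1$ is legitimate on both sides of the formal identity. The coefficients of $x^k$ are Laurent polynomials in $q^{1/2}$: on the left by definition, and on the right because $1/((1-qx)(1-q^{-1}x))$ expands with coefficient $\sum_{j=-k}^{k}q^{j}$ at $x^k$. So the substitution is a ring homomorphism applied coefficientwise, and the identity is preserved. One must also check the sign conventions once more: no stray $(-1)^{\vect{n}\cdot\vect{d}}$ remains after the passage from \eqref{equation:dtpt-fano} to \eqref{eqn:poincareseries}, because $\vect{n}\cdot k\delta=2k$ is even.
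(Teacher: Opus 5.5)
Your proposal is correct and takes the same approach as the paper, whose proof is a one-line specialization of $q$ to $1$ in \eqref{eqn:poincareseries}. Your extra remarks make explicit what the paper leaves implicit: odd cohomology vanishes (and the dimension is even), so $P_X(1)=\chi(X)$; coefficientwise evaluation is a ring homomorphism; and your cross-check via $F(x)\frac{1+x}{1-x}$ is correct.
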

 
\begin{proof}
  This follows from specializing $q$ to $1$ in (\ref{eqn:poincareseries}).
\end{proof}

In \cref{table:betti} we have given the even Betti numbers
from \cref{theorem:betti}
for~$k=1,\ldots,5$.

\begin{table}[ht]
  \centering
  \begin{tabular}{cc|cc}
    \toprule
    $Q$ & $k$ & even Betti numbers & Euler characteristic \\
    \midrule
    $\Aone$ & $1$ & $1,2,1$ & $4$ \\
    $\Aone$ & $2$ & $1,2,4,2,1$ & $10$ \\
    $\Aone$ & $3$ & $1,2,4,6,4,2,1$ & $20$ \\
    $\Aone$ & $4$ & $1,2,4,6,9,6,4,2,1$ & $35$ \\
    $\Aone$ & $5$ & $1,2,4,6,9,12,9,6,4,2,1$ & $56$ \\
    \midrule
    $\Atwo$ & $1$ & $1,3,1$ & $5$ \\
    $\Atwo$ & $2$ & $1,3,6,3,1$ & $14$ \\
    $\Atwo$ & $3$ & $1,3,6,10,6,3,1$ & $30$ \\
    $\Atwo$ & $4$ & $1,3,6,10,15,10,6,3,1$ & $55$ \\
    $\Atwo$ & $5$ & $1,3,6,10,15,21,15,10,6,3,1$ & $91$ \\
    \midrule
    $\Athree$ & $1$ & $1,4,1$ & $6$ \\
    $\Athree$ & $2$ & $1,4,9,4,1$ & $19$ \\
    $\Athree$ & $3$ & $1,4,9,16,9,4,1$ & $44$ \\
    $\Athree$ & $4$ & $1,4,9,16,25,16,9,4,1$ & $85$ \\
    $\Athree$ & $5$ & $1,4,9,16,25,36,25,16,9,4,1$ & $146$ \\
    \midrule
    $\Dfour$ & $1$ & $1,5,1$ & $7$ \\
    $\Dfour$ & $2$ & $1,5,13,5,1$ & $25$ \\
    $\Dfour$ & $3$ & $1,5,13,25,13,5,1$ & $63$ \\
    $\Dfour$ & $4$ & $1,5,13,25,41,25,13,5,1$ & $129$ \\
    $\Dfour$ & $5$ & $1,5,13,25,41,61,41,25,13,5,1$ & $231$ \\
    \bottomrule
  \end{tabular}
  \caption{The even Betti numbers and Euler characteristics of~$X_k^{\vect{n}}(Q)$ for~$k=1,\ldots,5$.}
  \label{table:betti}
\end{table}

\section{Cohomological Hall algebra action}
\label{section:coha}

The Euler form of $Q$ being symmetric when restricted to $\Lambda$ is also the setting in which
the so-called semistable cohomological Hall algebra,
a convolution structure on
\begin{equation}
  {\rm CoHa}^\partial(Q)\colonequals\bigoplus_{\vect{d}\in\Lambda}\HH^*_{{\rm G}_{\vect{d}}}(\Rep^{\partial\semistable}(Q,\vect{d}),\mathbb{Q}),
\end{equation}
is well-studied \cite{MR3779991}.
In the particular cases of \cref{proposition:5-series}, by \cite{MR5085658},
it is known to be isomorphic to an algebra~$\mathcal{P}_{|Q_0|-2}$ given by explicit generators and relations.
It admits a subalgebra
\begin{equation}
  {\rm CoHa}_\delta^\partial(Q)=\bigoplus_{k\geq 0}{\rm CoHa}^\partial(Q)_{k\delta}.
\end{equation}
This subalgebra naturally acts on the graded vector space
\begin{equation}
  \mathcal H_{\vect{n}}(Q)\colonequals\bigoplus_{k\geq 0}\HH^*(X_k^{\vect{n}}(Q),\mathbb{Q}),
\end{equation}
where we set~$X_0^{\vect{n}}(Q)=\mathrm{pt}$ to include the degree-zero term.
With this setup we can prove the following.

\begin{proposition}
  In the five cases of \cref{proposition:5-series},
  the direct sum~$\mathcal H_{\vect{n}}(Q)$ of
  the singular cohomologies of all~$X_k^{\vect{n}}(Q)$
  is a cyclic module over~${\rm CoHa}_\delta^\partial(Q)$, which is given by explicit generators and quadratic relations.
\end{proposition}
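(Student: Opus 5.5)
We propose to realize the action geometrically and then identify the module by comparing graded dimensions. First, describe the action: for $k,l\geq 0$ we use the Hecke-type correspondence between $\Rep^{\partial\semistable}(Q,l\delta)$ and the framed moduli spaces. Concretely, consider the variety of short exact sequences $0\to U\to \widehat V\to \widehat W\to 0$ of $\widehat{Q}$-representations, with $U$ an (unframed) $\partial$-semistable representation of dimension vector $l\delta$ and $\widehat V,\widehat W$ framed-stable of dimension vectors $\widehat{(k+l)\delta}$ and $\widehat{k\delta}$. Pull back and push forward along this correspondence, as in the standard construction of CoHa modules from framed moduli. The symmetry of $\langle-,-\rangle$ on $\Ker(\partial)$ ensures the Euler-form twist is the same for all pieces, so the degrees are compatible and associativity follows formally from the associativity of the CoHa multiplication. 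Under this action, the unit $1\in\HH^*(X_0^{\vect{n}}(Q))=\HH^*(\mathrm{pt})$ is sent by ${\rm CoHa}^\partial(Q)_{k\delta}$ into $\HH^*(X_k^{\vect{n}}(Q))$. The first step is therefore to prove that this map $\rho_k\colon{\rm CoHa}^\partial(Q)_{k\delta}\to\HH^*(X_k^{\vect{n}}(Q))$ is surjective.

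For surjectivity, we use the presentation of $X_k^{\vect{n}}(Q)$ as a quotient $\Rep^{\widehat\partial\stable}(\widehat Q,\widehat{k\delta})/\mathrm{G}_{k\delta}$. Its cohomology is generated by the Chern classes of the tautological bundles; this follows from the pure Hodge--Tate property in \cref{theorem:fano-series} and the standard Kirwan surjectivity for framed moduli. The map $\rho_k$ is essentially the restriction from $\HH^*_{\mathrm{G}_{k\delta}}(\Rep^{\partial\semistable})$ to the open framed-stable part, composed with the framing pullback. Since $\Rep(Q,k\delta)$ is an affine space whose equivariant cohomology surjects onto the target, the tautological classes lie in the image of the semistable equivariant cohomology. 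Hence $\rho_k$ is surjective, and $\mathcal H_{\vect{n}}(Q)$ is cyclic, generated by $1$.

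Next we pin down the annihilator. By \cite{MR5085658}, ${\rm CoHa}_\delta^\partial(Q)$ is a subalgebra of $\mathcal{P}_{|Q_0|-2}$ with explicit generators. Taking Poincaré series, the Hilbert series of ${\rm CoHa}_\delta^\partial(Q)$ (via DT invariants, namely the $\Exp$ of the $P$- and $\delta$-contributions weighted by $(1-q)^{-1}$ factors) can be compared with \eqref{eqn:poincareseries}. The discrepancy should be accounted for exactly by the ideal generated by a few quadratic elements in degree $2\delta$, built from products of generators associated with $\alpha_p$ and $\overline{\alpha}_p$. These include the relations forcing the "$\mathbb{P}^{\vect n\cdot\delta-1}=\mathbb{P}^1$" truncation, i.e.\ the vanishing of $c_2$ of the rank-two framing quotient in each $\delta$-factor. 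We then check that these quadratic elements map to zero under $\rho_2$, using the $k=2$ geometry of \cref{subsection:fourfolds} or a direct Chern-class computation. Finally, we compare Hilbert series to show that the cyclic module over the quotient algebra by these relations has Poincaré series equal to \eqref{eqn:poincareseries}, which forces the annihilator to be exactly this ideal.

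The main obstacle is this final step: identifying the annihilator as a quadratically generated ideal and verifying the Hilbert series equality. We would first try to find a PBW-type basis of $\mathcal{P}_{|Q_0|-2}$ restricted to multiples of $\delta$. Matching the factor $(1+x)^{|Q_0|-2}$ with exterior-type generators coming from the pairs $(\alpha_p,\overline{\alpha}_p)$, and the factor $1/((1-qx)(1-x)^2(1-q^{-1}x))$ with polynomial generators from $\delta$, should then make the count transparent.
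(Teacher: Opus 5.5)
\textbf{Cyclicity.} This half of your proposal is fine. The action of $\mathrm{CoHa}^\partial(Q)_{k\delta}$ on $1$ is pullback along the forgetful map, followed by restriction to the framed-stable locus, with no Euler-class twist. Surjectivity then follows because the composite from $\HH^*_{\mathrm{G}_{k\delta}}(\mathrm{pt})$ is already surjective by tautological generation. That generation comes from King--Walter, or from the Chow localization sequence plus Chow $=$ cohomology, not from Hodge--Tate purity. This is exactly the content of \cite[Theorem~3.2]{MR3779991}, which the paper simply cites, so here you re-derive a citation and agree with the paper.

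\textbf{The gap: which object has the presentation.} In the statement, the clause ``given by explicit generators and quadratic relations'' refers to the algebra $\mathrm{CoHa}^\partial_\delta(Q)$, not to the module. The paper proves it as follows.
\begin{itemize}
\item It takes the generators $e_{p,2k+1}$, $f_{p,2k+1}$, $g_{2k}$, $h_{2k+2}$ of $\mathrm{CoHa}^\partial(Q)$ from \cite[\S6]{MR5085658}.
\item A monomial of degree in $\mathbb{N}\delta$ contains as many $e_p$'s as $f_p$'s, by the last part of \cref{lemma:euler-form-stables}.
\item Using $[e_{p,2k+1},f_{p,2l+1}]=h_{2(k+l)+2}$, the subalgebra is generated by the degree-$\delta$ elements $e_{p,2k+1}f_{p,2l+1}$, $g_{2k}$, $h_{2k+2}$, with quadratic relations inherited from \cite[\S6]{MR5085658}.
\end{itemize}
You only assert that $\mathrm{CoHa}^\partial_\delta(Q)$ has explicit generators and never derive them.

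\textbf{Why your annihilator plan fails.} You instead try to show that the annihilator of $1$ is generated by a few quadratic elements in degree $2\delta$, and this cannot be true.
\begin{itemize}
\item $\mathrm{CoHa}^\partial(Q)_\delta=\HH^*_{\mathrm{G}_\delta}(\Rep^{\partial\semistable}(Q,\delta))$ is infinite-dimensional, since it contains all $g_{2k}$ and $h_{2k+2}$.
\item $\HH^*(X_1^{\vect{n}}(Q))$ is the finite-dimensional cohomology of a del Pezzo surface.
\item So the annihilator already has an infinite-dimensional component in degree $\delta$.
\item The degree-zero part of the algebra is $\mathbb{Q}$, so that component is just the span of the degree-$\delta$ generators of the ideal.
\end{itemize}
Hence no ideal generated in degree $2\delta$, and indeed no finitely generated ideal, can be the annihilator. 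The truncation relations you mention yourself would already be needed at $k=1$, i.e.\ in degree $\delta$.

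Computing this annihilator is precisely what the paper deliberately leaves open in the remark following the proposition. Your Hilbert-series matching step therefore goes beyond the statement and, as formulated, would fail. Meanwhile the part the statement does require, the generators of $\mathrm{CoHa}^\partial_\delta(Q)$, is left unproved.
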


\begin{proof}
  By \cite[Theorem~3.2]{MR3779991}, the direct sum of cohomology~$\bigoplus_{\vect{d}\in\Lambda}\HH^*(X_{\vect{d}}^{\vect{n}}(Q),\mathbb{Q})$
  is a cyclic module over~${\rm CoHa}^\partial(Q)$ with generator~$1\in \mathbb{Q}\cong \HH^0(X_0^{\vect{n}}(Q),\mathbb{Q})$.
  In other words, the~$\Lambda$-graded map
  \begin{equation}
    {\rm CoHa}^\partial(Q)\rightarrow \bigoplus_{\vect{d}\in\Lambda}\HH^*(X_{\vect{d}}^{\vect{n}}(Q),\mathbb{Q}),
  \end{equation}
  given by multiplication by this class~$1$, is surjective.
  Consequently, the restriction~${\rm CoHa}_\delta^\partial(Q)\rightarrow \mathcal H_{\vect{n}}(Q)$ is also surjective,
  thus~$\mathcal H_{\vect{n}}(Q)$ is a cyclic module over~${\rm CoHa}_\delta^\partial(Q)$.
  The algebra~${\rm CoHa}^\partial(Q)$ is given (see \cite[\S6]{MR5085658}) by generators
\begin{itemize}
\item $e_{p,2k+1}$ for $p=1,\ldots,|Q_0|-2$, $k\geq 0$ in degree $\alpha_p$,
\item $f_{p,2k+1}$ for $p=1,\ldots,|Q_0|-2$, $k\geq 0$ in degree $\overline{\alpha}_p$,
\item $g_{2k}$ for $k\geq 0$ in degree $\delta$,
\item $h_{2k+2}$ for $k\geq 0$ in degree $\delta$.
\end{itemize}
Since
\begin{equation}
  [e_{p,2k+1},f_{p,2l+1}]=h_{2(k+l)+2}
\end{equation}
for all~$k,l\geq 0$ by \cite[\S6]{MR5085658},
the subalgebra~${\rm CoHa}_\delta^\partial(Q)$ is thus generated by the elements
\begin{itemize}
\item $e_{p,2k+1}f_{p,2l+1}$ for $p=1,\ldots,|Q_0|-2$ and $k,l\geq 0$,
\item $g_{2k}$ for $k\geq 0$,
\item $h_{2k+2}$ for $k\geq 0$,
\end{itemize}
all in degree~$\delta$.
These generators are subject to a list of quadratic relations also stated in \cite[\S6]{MR5085658}.
\end{proof}

\begin{remark}
  In fact, \cite[Theorem~3.2]{MR3779991} also describes the annihilator ideal of $1$ in ${\rm CoHa}^\partial(Q)$.
  Its intersection with ${\rm CoHa}_\delta^\partial(Q)$
  should be computable using the explicit description of ${\rm CoHa}^\partial(Q)$.
  Then, using the PBW basis for ${\rm CoHa}^\partial(Q)$ \cite[Proposition 6.1]{MR5085658},
  one should be able to compute the generating series of Betti numbers of the $X_k^{\vect{n}}(Q)$,
  therefore ``categorifying'' the proof of Theorem \ref{theorem:betti}.
  We do not do this here, since the present proof of the theorem, although being formal, is short and transparent.
\end{remark}

\section{Low-dimensional examples}
\label{section:low-dimensional}
We will now discuss the explicit geometry of these Fano varieties in more detail,
especially in low dimension.

Unlike for the Betti numbers,
where we have found a uniform description in \cref{theorem:betti},
we use the algorithms implemented in \cite{quivertools}
to compute some invariants,
namely in \cref{table:anticanonical-sections}
we give~$\mathrm{h}^0(X,\omega_X^\vee)$
for~$X=X_k^{\vect{n}}(Q)$ and~$k=2,3,4$,
and in \cref{table:chern-numbers}
we give the Chern numbers for~$k=2,3$
(where computationally feasible).
We do not list the values for~$k=1$,
because \cref{subsection:surfaces} provides explicit identifications.

Observe that~$\AoneA$ and~$\AoneB$ share all the numerical invariants
we list in \cref{table:anticanonical-sections,table:chern-numbers},
but by \cref{theorem:fano-series} they are not isomorphic,
as the index is~2, resp.~1.

\begin{table}[ht]
  \centering
  \begin{tabular}{ccccc}
    \toprule
    $k$ & 2 & 3 & 4 \\
    \midrule
    $\AoneA$  & 75 & 620 & 5140 \\
    $\AoneB$  & 75 & 620 & 5140 \\
    $\Atwo$   & 63 & 504 & 4090 \\
    $\Athree$ & 52 & 403 & 3206 \\
    $\Dfour$  & 42 & 316 & 2470 \\
    \bottomrule
  \end{tabular}
  \caption{Dimensions of~$\HH^0(X,\omega_X^\vee)$ for~$X=X_k^{\vect{n}}(Q)$.}
  \label{table:anticanonical-sections}
\end{table}

\begin{table}[ht]
  \centering
  \begin{subtable}{\textwidth}
    \centering
    \begin{tabular}{cccccc}
      \toprule
      $Q$ & $\mathrm{c}_1^4$ & $\mathrm{c}_2\mathrm{c}_1^2$ & $\mathrm{c}_2^2$ & $\mathrm{c}_3\mathrm{c}_1$ & $\mathrm{c}_4$ \\
      \midrule
      $\AoneA$  & 352 & 184 & 98 & 52 & 10 \\
      $\AoneB$  & 352 & 184 & 98 & 52 & 10 \\
      $\Atwo$   & 289 & 166 & 101 & 56 & 14 \\
      $\Athree$ & 232 & 148 & 107 & 58 & 19 \\
      $\Dfour$  & 181 & 130 & 116 & 58 & 25 \\
      \bottomrule
    \end{tabular}
    \caption{Fano fourfolds~$X_2^{\vect{n}}(Q)$.}
    \label{table:chern-numbers-fano-4-folds}
  \end{subtable}

  \medskip

  \begin{subtable}{\textwidth}
    \centering
    \begin{tabular}{cccccccccccc}
      \toprule
      $Q$ & $\mathrm{c}_1^6$ & $\mathrm{c}_2\mathrm{c}_1^4$ & $\mathrm{c}_2^2\mathrm{c}_1^2$ & $\mathrm{c}_2^3$ & $\mathrm{c}_3\mathrm{c}_1^3$ & $\mathrm{c}_3\mathrm{c}_2\mathrm{c}_1$ & $\mathrm{c}_3^2$ & $\mathrm{c}_4\mathrm{c}_1^2$ & $\mathrm{c}_4\mathrm{c}_2$ & $\mathrm{c}_5\mathrm{c}_1$ & $\mathrm{c}_6$ \\
      \midrule
      $\AoneA$  & 38272 & 19968 & 10408 & 5436 & 6480 & 3372 & 1088 & 1368 & 724 & 192 & 20 \\
      $\AoneB$  & 38272 & 19968 & 10408 & 5436 & 6480 & 3372 & 1088 & 1368 & 724 & 192 & 20 \\
      $\Atwo$   & 30287 & 16539 & 9062 & 5030 & 5805 & 3190 & 1122 & 1404 & 797 & 234 & 30 \\
      \bottomrule
    \end{tabular}
    \caption{Fano sixfolds~$X_3^{\vect{n}}(Q)$. Values for~$\Athree$ and~$\Dfour$ are missing because of computational limits.}
    \label{table:chern-numbers-fano-6-folds}
  \end{subtable}

  \caption{Chern numbers of~$X_k^{\vect{n}}(Q)$ for~$k=2,3$.}
  \label{table:chern-numbers}
\end{table}

In dimensions~2 and~4,
so when~$k=1,2$,
we will describe their geometry in a bit more detail.

\subsection{del Pezzo surfaces}
\label{subsection:surfaces}
We can identify the del Pezzo surfaces of Picard rank between~$2$ and~$5$
with the $k=1$ cases of the above five series,
giving:
\begin{center}
  \begin{tabular}{cc}
    \toprule
    $Q$ & $X_1^{\vect{n}}(Q)$ \\
    \midrule
    $\AoneA$  & $\mathbb{P}^1\times\mathbb{P}^1$ \\
    $\AoneB$  & $\Bl_1\mathbb{P}^2$ \\
    $\Atwo$   & $\Bl_2\mathbb{P}^2$ \\
    $\Athree$ & $\Bl_3\mathbb{P}^2$ \\
    $\Dfour$  & $\Bl_4\mathbb{P}^2$ \\
    \bottomrule
  \end{tabular}
\end{center}
This identification is standard,
and in our setting follows from inspecting the index and the Picard rank,
as computed in \cref{theorem:fano-series}.

We can make \cref{proposition:fibers-forgetful}
explicit in this easy case,
and obtain the conic bundle descriptions of these del Pezzo surfaces.
Namely,
the Luna stratification of the base~$\mathbb{P}^1$
consists of the generic statum attached to the Luna type~$(\delta^1)$,
where the fiber is a smooth conic
(we have~$s=1$ with~$k_1=1$, and~$\ell_i=0$ for~$i=1,\ldots,\#Q_0-2$),
and~$\#Q_0-2$ strata (which are points in~$\mathbb{P}^1)$
where the fiber is the nodal conic
(we have~$s=0$, and~$\ell_i=1$ for exactly one~$i\in\{1,\ldots,\#Q_0-2\}$).

\subsection{Fano fourfolds}
\label{subsection:fourfolds}

Next, we take~$k=2$, and obtain five Fano fourfolds.
From the discussion in \cref{proposition:fibers-forgetful}
we see that the morphism~$p_2\colon X\to\mathbb{P}^2$
looks generically like a quadric surface bundle,
but with singular fibres which are not merely singular quadric surfaces.
Recently, Kresch--Tschinkel studied such fibrations,
and we recall their \cite[Definition~1]{MR4159824}.

\begin{definition}
  \label{definition:involution-surface-bundle}
  Let~$S$ be a smooth variety.
  An \emph{involution surface bundle} is
  a flat projective morphism~$p\colon X\to S$ for which
  \begin{itemize}
    \item the locus~$U\subseteq S$ where~$p$ is a smooth morphism is Zariski-dense,
    \item the fibre~$p^{-1}(s)$ for~$s\in U$ is an involution surface,
      i.e., a minimal del Pezzo surface of degree~8.
  \end{itemize}
  We say it has \emph{mild degeneration}
  if every singular fibre is geometrically isomorphic to
  one of the following reduced schemes:
  \begin{description}
    \item[type I]
      a nodal quadric surface
    \item[type II]
      the product of two reduced singular conics
    \item[type III]
      the union of two copies of the second Hirzebruch surface~$\mathbb{F}_2$,
      with each~$(-2)$-curve glued to a fibre of the other surface
    \item[type IV]
      the product of a smooth conic with a reduced singular conic.
  \end{description}
\end{definition}

We will extend \cite[Theorem~24]{2607.12895},
which is only concerned with~$\Dfour$.
We can reuse parts of the computation.
First,
recall that the possible Luna types are determined in \cref{lemma:luna-types},
and that \cref{lemma:local-quivers} gives the local quivers.
For all these local quivers
we list the fibers from \cref{proposition:fibers-forgetful}
and a reference for their description,
in \cref{table:luna-types-fibers},
where~$\alpha,\beta\in P$ are distinct and not complementary when both occur.
\begin{table}[ht]
  \centering
  \begin{tabular}{lll}
    \toprule
    Luna type & fibre & reference \\
    \midrule
    $(\delta,\delta)$ & $\mathbb{P}^1\times\mathbb{P}^1$ & \cite[Lemma~18]{2607.12895} \\
    $(\delta^2)$ & type~I & \cite[Lemma~21]{2607.12895} \\
    $(\delta,\alpha,\overline{\alpha})$ & type~IV & \cite[Lemma~19]{2607.12895} \\
    $(\alpha,\overline{\alpha},\beta,\overline{\beta})$ & type~II & \cite[Lemma~20]{2607.12895} \\
    $(\alpha^2,\overline{\alpha}^2)$ & type~III & \cite[Lemma~22]{2607.12895} \\
    \bottomrule
  \end{tabular}
  \caption{Luna types and fibres of~$p_2$ for~$k=2$.}
  \label{table:luna-types-fibers}
\end{table}

This allows us to prove the following generalization of \cite[Theorem~24]{2607.12895}.
\begin{proposition}
  \label{proposition:involution-surface-bundles}
  Let~$Q$ be one of the extended Dynkin quivers from \cref{proposition:5-series}.
  The morphism
  \begin{equation}
    p_2\colon X_2^{\vect{n}}(Q)\to\modulispace[\partial\semistable]{Q,2\delta}\cong\mathbb{P}^2
  \end{equation}
  is an involution surface bundle which has mild degeneration.
  The discriminants are the following subsets of the configuration
  in \cref{figure:discriminant-locus}:
  \begin{center}
    \begin{tabular}{ll}
      $Q$ & discriminant \\
      \midrule
      $\AoneA$  & the conic \\
      $\AoneB$  & the conic \\
      $\Atwo$   & the conic and one line \\
      $\Athree$ & the conic and two lines \\
      $\Dfour$  & the conic and the triangle of lines \\
    \end{tabular}
  \end{center}
\end{proposition}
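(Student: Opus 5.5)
Flatness and projectivity of~$p_2$ come from \cref{corollary:flat-forgetful} and \cref{theorem:framed}. Since~$\vect{n}\cdot\delta=2$, the general fibre is~$\mathbb{P}^1\times\mathbb{P}^1$, an involution surface. The base is smooth because~$\modulispace[\partial\semistable]{Q,2\delta}\cong\Sym^2\mathbb{P}^1\cong\mathbb{P}^2$. What remains is to list all Luna types for~$k=2$, identify the fibre over each stratum, and locate the strata in~$\mathbb{P}^2$.

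By \cref{lemma:luna-types}, with~$k=2$ the Luna types are exactly the five rows of \cref{table:luna-types-fibers}: $(\delta,\delta)$, $(\delta^2)$, $(\delta,\alpha_p,\overline{\alpha}_p)$, $(\alpha_p,\overline{\alpha}_p,\alpha_q,\overline{\alpha}_q)$ with~$p\neq q$, and $(\alpha_p^2,\overline{\alpha}_p^2)$.

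\Cref{proposition:fibers-forgetful} describes each fibre as a product of framed nilpotent moduli spaces for~$\mathrm{J}$ and~$\mathrm{C}_2$. The framing data there are~$\vect{n}\cdot\delta=2$ on Jordan factors and, by the inspection used in the proof of \cref{theorem:betti}, $(\vect{n}\cdot\alpha_p,\vect{n}\cdot\overline{\alpha}_p)=(1,1)$ on two-cycle factors. These are exactly the local settings computed in \cite[Lemmas~18--22]{2607.12895} for~$\Dfour$. Those lemmas depend only on the local quiver, local dimension vector and local framing, not on~$Q$, so they apply verbatim to all five cases. This gives the fibre types listed in the table, each of which is one of the types in \cref{definition:involution-surface-bundle}. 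So~$p_2$ has mild degeneration, and the smooth locus is the open stratum of type~$(\delta,\delta)$, which is dense.

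For the discriminant, write the base as pairs~$\{x,y\}$ of points of the~$\mathbb{P}^1$ parametrizing~$\partial$-polystable representations of dimension~$\delta$. By \cref{lemma:stables} and \cref{lemma:euler-form-stables}, this~$\mathbb{P}^1$ contains~$t=|Q_0|-2$ special points~$c_p$, corresponding to the polystable representations~$S_{\alpha_p}\oplus S_{\overline{\alpha}_p}$.
\begin{itemize}
  \item The non-generic strata with~$x=y$ form the diagonal conic of~$\Sym^2\mathbb{P}^1$: types~$(\delta^2)$ and~$(\alpha_p^2,\overline{\alpha}_p^2)$.
  \item The strata with~$c_p\in\{x,y\}$ lie on the tangent line~$L_p=\{c_p+y\}$ to the conic: types~$(\delta,\alpha_p,\overline{\alpha}_p)$ and~$(\alpha_p,\overline{\alpha}_p,\alpha_q,\overline{\alpha}_q)$.
\end{itemize}
Hence the discriminant is the conic together with~$t$ lines. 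This gives~$0,0,1,2,3$ lines in the five cases; for~$\Dfour$ the three lines form the triangle of \cref{figure:discriminant-locus}.

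The main obstacle is checking that the fibres are reduced schemes of the stated types, rather than just having the right underlying varieties. Here I would rely on the cited lemmas. The point to verify carefully is that those lemmas really do give a scheme-theoretic identification of~$p_2^{-1}(V)$. This follows from \cref{proposition:fibres-framed} together with the étale local triviality over~$S_\xi$, since the local model is then independent of the ambient quiver~$Q$.
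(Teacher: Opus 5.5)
Your proposal is correct and follows essentially the same route as the paper. You enumerate the five Luna types for $k=2$ via \cref{lemma:luna-types}, identify the fibres through \cref{proposition:fibers-forgetful} and the local computations of \cite[Lemmas~18--22]{2607.12895}, take flatness and the general fibre $\mathbb{P}^1\times\mathbb{P}^1$ from \cref{corollary:flat-forgetful}, and locate the strata in $\Sym^2\mathbb{P}^1\cong\mathbb{P}^2$. Your check that the local framings are $2$ and $(1,1)$, and your placement of the strata on the diagonal conic and the $t=|Q_0|-2$ tangent lines $\{c_p+y\}$, spell out details the paper leaves implicit.
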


\begin{proof}
  By \cref{lemma:luna-types}, the Luna types of representations of dimension~$2\delta$
  are precisely those in the first column of \cref{table:luna-types-fibers}.
  The local-quiver description in \cref{proposition:fibers-forgetful}
  and the computations in \cite[Lemmas~18--22]{2607.12895}
  gives the corresponding fibre descriptions
  in the second column of \cref{table:luna-types-fibers}.

  Under the identification
  $\modulispace[\partial\semistable]{Q,2\delta}\cong\Sym^2\mathbb{P}^1\cong\mathbb{P}^2$,
  the indicated Luna strata give the conic and lines in the discriminant table.
  The source is smooth and projective by \cref{theorem:fano-series},
  and the preceding equidimensionality result gives flatness.
  The general fibre is~$\mathbb{P}^1\times\mathbb{P}^1$,
  so the result follows.
\end{proof}

\begin{figure}[t]
  \centering
  \begin{tikzpicture}[scale=0.75, font=\small, line width=1.1pt]
    \draw (-3.2,-1.3) -- (3.2,-1.3)    node[right] {IV};
    \draw (2.66,-2.01) -- (-0.41,3.31) node[above] {IV};
    \draw (-2.66,-2.01) -- (0.41,3.31) node[above] {IV};
    \draw[red] (0,0) circle (1.3);
    \node[red] at (0,1.65) {I};
    \filldraw (0,2.6)      circle (2pt) node[right]       {II};
    \filldraw (-2.25,-1.3) circle (2pt) node[below right] {II};
    \filldraw (2.25,-1.3)  circle (2pt) node[below left]  {II};
    \filldraw (0,-1.3)     circle (2pt) node[below right] {III};
    \filldraw (1.13,0.65)  circle (2pt) node[right]       {III};
    \filldraw (-1.13,0.65) circle (2pt) node[left]        {III};
  \end{tikzpicture}
  \caption{Configuration of three lines and a smooth conic tangent to each of them,
    labeled by the possible fibers in the involution surface bundle structures
    described in \cref{proposition:involution-surface-bundles}.
    Taken from \cite[Figure~1]{2607.12895}.
  }
  \label{figure:discriminant-locus}
\end{figure}

\printbibliography

\emph{Pieter Belmans}, \url{p.belmans@uu.nl} \\
Mathematical Institute, Utrecht University, Budapestlaan 6, 3584CD Utrecht, Netherlands

\emph{Markus Reineke}, \url{markus.reineke@rub.de} \\
Faculty of Mathematics, Ruhr-Universit\"at Bochum, Universit\"atsstra\ss e 150, 44780 Bochum, Germany

\end{document}